\newtheorem{thm}{Theorem}[section]
\newtheorem{defn}[thm]{Definition}
\newtheorem{corollary}[thm]{Corollary}
\newtheorem{lemma}[thm]{Lemma}
\newtheorem{example}[thm]{Example}
\newtheorem{assumption}[thm]{Assumption}
\theoremstyle{remark}
\newtheorem{remark}[thm]{Remark}
\def\qed{{\hfill $\Box$ \bigskip}}
\def\XXint#1#2#3{{\setbox0=\hbox{$#1{#2#3}{\int}$}
\vcenter{\hbox{$#2#3$}}\kern-.5\wd0}}
\newcommand\aint{-\hspace{-0.38cm}\int}
\newcommand\cbrk{\text{$]$\kern-.15em$]$}}
\newcommand\opar{\text{\,\raise.2ex\hbox{${\scriptstyle
|}$}\kern-.34em$($}}
\newcommand\cpar{\text{$)$\kern-.34em\raise.2ex\hbox{${\scriptstyle |}$}}\,}
\def\<{\langle}
\def\>{\rangle}
\newcommand\bL{\mathbb{L}}
\newcommand\bE{\mathbb{E}}
\newcommand\bN{\mathbb{N}}
\newcommand\bP{\mathbb{P}}
\newcommand\bT{\mathbb{T}}
\newcommand\bZ{\mathbb{Z}}
\newcommand\fR{\mathbf{R}}
\newcommand\cF{\mathcal{F}}
\newcommand\cG{\mathcal{G}}
\newcommand\cI{\mathcal{I}}
\newcommand\cK{\mathcal{K}}
\newcommand\cL{\mathcal{L}}
\newcommand\cP{\mathcal{P}}
\newcommand\cS{\mathcal{S}}
\newcommand\cM{\mathcal{M}}
\newcommand\cT{\mathcal{T}}
\newcommand\cO{\mathcal{O}}
\def\R {{\mathbb R}}
\newcommand{\mysection}[1]{\section{#1}
\setcounter{equation}{0}}
\begin{document}

\title[$L_p$-regularity of stochastic singular integral operators]
{Boundedness  of stochastic singular integral operators
and its application to stochastic partial differential equations}

%

\author{Ildoo Kim}
\address{Center for Mathematical Challenges, Korea Institute for Advanced Study, 85 Hoegiro Dongdaemun-gu,
Seoul 130-722, Republic of Korea} \email{waldoo@kias.re.kr}
\thanks{The first author was supported by the TJ Park Science
         Fellowship of POSCO TJ Park Foundation}

\author{Kyeong-Hun Kim}
\address{Department of Mathematics, Korea University, 1 Anam-dong,
Sungbuk-gu, Seoul, 136-701, Republic of Korea}
\email{kyeonghun@korea.ac.kr}

\thanks{The second author was supported by Samsung Science  and Technology Foundation under Project Number SSTF-BA1401-02}

\subjclass[2010]{60H15, 42B20, 35S10, 35K30, 35B45}

\keywords{Stochastic Singular Integral Operator, $L_p$-estimate, Pseudo-differential operator, Stochastic Partial Differential Equation}

\begin{abstract}
In this article we  present a stochastic counterpart of the H\"ormander condition and Calder\'on-Zygmund theorem. 
Let $W_t$ be a Wiener process defined on a probability space $(\Omega,\bP)$ and $K(r,t,x,y)$ be a random kernel which is stochastically singular  in the sense that
$$
\bE  \left|\int_0^{t} \int_{|x-y|<\varepsilon}|K(s,t,x,y)|dy
dW_s\right|^p = \infty, \quad \forall \, t, p,\varepsilon>0,\, x\in \fR^d.
$$
We prove that
 the stochastic singular integral of the type
\begin{align}
                    \label{sto sin}
\bT  g(t,x) :=\int_0^{t} \int_{\fR^d} K(t,s,x,y) g(s,y)dy dW_s
\end{align}
is a   bounded  operator on $\bL_p=L_p(\Omega \times (0,\infty);
L_{p}(\fR^d))$  for any $p\geq 2$ if  it is bounded on $\bL_2$ and
the following (which we call stochastic H\"ormander condition) holds: there exists a quasi-metric $\rho$ on $(0,\infty)\times \fR^d$ and a positive constant  $C_0$ such that
 for $X=(t,x), Y=(s,y), Z=(r,z) \in (0,\infty) \times \fR^d$,
$$
\sup_{\omega\in \Omega,X,Y}\int_{0}^\infty \left[ \int_{\rho(X,Z) \geq C_0 \rho(X,Y)} | K(r,t, z,x) - K(r,s, z,y)| ~dz\right]^2 dr <\infty.
$$
 As a consequence of  our result on stochastic singular integral operators, 
we obtain the maximal regularity for a very wide class of stochastic partial differential equations.
\end{abstract}

\maketitle
\mysection{introduction}

Since Calder\'on and Zygmund's work, the singular integral theory has
been one of most important fields in Mathematics and it
 has been developed considerably in various directions 
(see e.g. \cite{grafakos2009modern,Stein1993}). In
particular,  due to H\"ormander  the singular integral 
\begin{align}
                        \label{det op}
Tf(x) : = \int_{\fR^d} K(x,y)f(y)dy
\end{align}
becomes a bounded operator on $L_p(\fR^d)$ if the kernel $K$ satisfies the H\"ormander condition
(see \cite[Theorem I.5.3]{Stein1993})
\begin{align}
                        \label{det hor}
\sup_{x,y\in \fR^d} \int_{|x-z| > 2 |x-y|} |K(x,z)-K(y,z)| dz <\infty.
\end{align}
H\"ormander's condition is considered as one of most general conditions in the theory of the singular integral, and there is a huge number of  applications to partial differential equations. For instance, consider the  heat equation
\begin{align}
                        \label{det eqn}
u_t =\Delta u + f,  \quad (t,x) \in (0,T] \times \fR^d, \quad u(0)=0.
\end{align}
 As is well known, for the solution $u$ we have 
$$
u_{x^ix^j}(t,x) = \int_0^t \int_{\fR^d}p_{x^ix^j}(s,t,x-y)f(s,y)dyds,
$$
where $p(t,x)$ is the heat kernel.  One can prove that the kernel 
$K(s,t,x,y) = 1_{s<t}p_{x^ix^j}(t-s,x-y)$ is singular but 
satisfies (\ref{det hor}) on $\fR^{d+1}$. Consequently this leads to  
\begin{align*}
\|u_{xx}\|_{L_p((0,T) \times \fR^d)} \leq C \|f\|_{L_p((0,T) \times \fR^d)}.
\end{align*}

Regarding the $L_p$-theory for stochastic PDEs,  Krylov \cite{Krylov1994, Krylov1999} firstly
introduced  the maximal $L_p$-regularity of the stochastic heat equation
\begin{align}
                    \label{sto}
du = \Delta u dt + gdW_t, \,\, t>0, \quad u(0,\cdot)=0.
\end{align}
In particular, he  proved the $L_p$-boundedness of 
\begin{align}
                        \label{625 eqn 1}
\nabla u (t,x) = \int_0^t \int_{\fR^d}\nabla p (t-s,x-y)g(s,y)dy dW_s.
\end{align}
The right hand side of (\ref{625 eqn 1}) becomes a stochastic
singular integral in the sense that
$$
\bE  \left|\int_0^{t} \int_{\fR^d}|\nabla p(s,t,x-y)|dy
dW_s\right|^p = \infty, \quad \forall \, t,p>0.
$$
Lately, $L_p$-theory has been further developed for 
  high-order stochastic PDEs,
stochastic integro-differential equations and certain stochastic
pseudo-differential equations. For related works, we refer to
\cite{kim2016parabolic, Krylov1994, Krylov1999, Mikulevicius2012} (Krylov's analytic approach) and \cite{van2012maximal, van2012stochastic} ($H^{\infty}$-calculus).   
 Krylov's approach requires differentiability of the  kernel, and  $H^{\infty}$-calculus approach  works only if the corresponding operator  is a generator of bounded analytic semigroup and does not depend on the time variable. 
 
Our primary goal is to introduce a theory with which one can  investigate the maximal regularity   for very large classes of stochastic partial differential equations. 
The stochastic singular integral of type (\ref{sto sin}) naturally appears if one tries to obtain the maximal $L_p$-regularity of solutions to stochastic partial differential equations.  We prove that the stochastic H\"ormander condition is sufficient for the $L_p$-boundedness of the stochastic integral and demonstrate that our result on stochastic singular integral  (\ref{sto sin}) leads to the maximal $L_p$-regularity of large classes of  stochastic partial differential equations.

 Here is a brief comment on our approach. 
 We noticed that  some  key techniques in Krylov's approach, e.g.   integration by parts,  are not applicable  for general 
kernels.   Hence we combined  Krylov's idea with some tools used for the deterministic singular integral theory and Calder\'on-Zygmund theorem.

The article is organized as follows. The main theorem is given Section 2 and the related parabolic
Littlewood-Paley inequality is introduced and proved in Section 3.
In section 4, the main theorem is proved on the basis of the
parabolic Littlewood-Paley inequality. Finally,  the maximal $L_p$-regularity result for  
SPDEs is given in Section 5.

We finish the introduction with  the notation used in the article.
$\bN$ and $\bZ$ denote the natural number system
and the integer number system, respectively.
As usual $\fR^{d}$
stands for the Euclidean space of points $x=(x^{1},...,x^{d})$.
 For $i=1,...,d$, multi-indices $\alpha=(\alpha_{1},...,\alpha_{d})$,
$\alpha_{i}\in\{0,1,2,...\}$, and functions $u(x)$ we set
$$
u_{x^{i}}=\frac{\partial u}{\partial x^{i}}=D_{i}u,\quad
D^{\alpha}u=D_{1}^{\alpha_{1}}\cdot...\cdot D^{\alpha_{d}}_{d}u,
\quad  \nabla u=(u_{x^1}, u_{x^2}, \cdots, u_{x^d}).
$$
We also use the notation $D^m$ for a partial derivative of order $m$ with respect to $x$.
For $p \in [1,\infty)$, a normed space $F$,
and a  measure space $(X,\mathcal{M},\mu)$, $L_{p}(X,\cM,\mu;F)$
denotes the space of all $F$-valued $\mathcal{M}^{\mu}$-measurable functions
$u$ so that
\[
\left\Vert u\right\Vert _{L_{p}(X,\cM,\mu;F)}:=\left(\int_{X}\left\Vert u(x)\right\Vert _{F}^{p}\mu(dx)\right)^{1/p}<\infty,
\]
where $\mathcal{M}^{\mu}$ denotes the completion of $\cM$ with respect to the measure $\mu$.

For $p=\infty$, we write $u \in L_{\infty}(X,\cM,\mu;F)$ iff
$$
\sup_{x}|u(x)| := \|u\|_{L_{\infty}(X,\cM,\mu;F)}
:= \inf\left\{ \nu \geq 0 : \mu( \{ x: \|u(x)\|_F > \nu\})=0\right\} <\infty.
$$
If there is no confusion for the given measure and $\sigma$-algebra, we usually omit the measure and the $\sigma$-algebra.
In particular, for a domain $\cO \subset \fR^d$ we denote $L_p(\cO) = L_p(\cO,\cL, \ell ;\fR)$ and $L_p(l_2) = L_p(\cO,\cL, \ell ;l_2)$,
where $\cL$ is the Lebesgue measurable sets, $\ell$ is the Lebesuge measure, and $l_2$ is the space of sequences $a=(a_n)$ so that
$$
|a|^2_{l_2} = \sum_{n=1}^\infty |a_n|^2 < \infty.
$$
We use  ``$:=$" to denote a definition. 
For $a,b\in \fR$, 
$a \wedge b := \min\{a,b\}$, $a \vee b := \max\{a,b\}$, and $\lfloor a \rfloor$ is the biggest integer which is less than or equal to $a$.
By $\cF$ and $\cF^{-1}$ we denote the d-dimensional Fourier transform and the inverse Fourier transform, respectively. That is,
$\cF(f)(\xi) := \int_{\fR^{d}} e^{-i x \cdot \xi} f(x) dx$ and $\cF^{-1}(f)(x) := \frac{1}{(2\pi)^d}\int_{\fR^{d}} e^{ i\xi \cdot x} f(\xi) d\xi$.
For a Lebesgue measurable set $A\subset \fR^d$, we use $|A|$ to denote its Lebesgue
measure. For a set $B$, $1_B$ is the indicator of $B$, i.e. $1_B(b)=1$ if $b \in B$ and $1_B(b)=0$ otherwise. 
For a complex number $z$, $\overline z$ is the complex conjugate of $z$ and $\Re[z]$ is the real part of $z$.
For functions  depending on $\omega$, $t$,  and $x$, the argument
$\omega \in \Omega$ will be usually omitted.
Usually $X_0$, $X$, $Y$, $Z$ denote the vectors in $(0,\infty) \times \fR^{d}$ and are represented by
$$
X_0=(t_0,x_0), \quad X=(t,x), \quad Y=(s,y), \quad Z=(r,z),
$$
where
$t_0$, $t$, $s$, $r$ are positive numberes and $x_0$, $x$, $y$, $z$ are vectors in $\fR^d$.
Finally, $N$ denotes a generic constant which can differ from line to line and
if we write $N=N(a,b,\ldots)$, then this means that the constant $N$ depends only on $a,b,\ldots$.

\mysection{main result}
Let $(\Omega,\cF,P)$ be a probability space and
$\{\cF_{t},t\geq0\}$ be an increasing filtration of
$\sigma$-fields on $\Omega$ satisfying the usual condition, i.e.
$\cF_{t}\subset\cF$ contains all $(\cF,P)$-null sets and $\cF_t = \bigcap_{s>t} \cF_s$.
 By $\cP$  we denote the predictable $\sigma$-algebra, that is,  $\cP$ is the smallest $\sigma$-algebra containing
the collection of all sets $A \times (s,t]$, where $0 \leq s \leq t <\infty$ and $A \in \cF_s$.
 Let $W^1_t, W^2_t,\cdots$  be an infinite sequence of   independent one-dimensional Wiener
processes defined on $\Omega$, each of which is a Wiener
process relative to $\{\cF_{t},t\geq0\}$.
For $T \in (0,\infty]$ and a domain $\cO \subset \fR^d$, we denote
$$
\cO_T := (0,T) \times \cO.
$$
Define
$$\bL_p(\cO_T)=L_p(\Omega \times (0,T),\cP;L_{p}(\cO)), \quad
\bL_p(\cO_T, l_2)=L_p(\Omega \times (0,T),\cP;L_{p}(\cO; l_2)),
$$
and
$$
\|g\|_{\bL_p(\cO_T, l_2)}= \left(\bE \int^{T}_0 \int_{\cO} |g|^p_{l_2} dx dt \right)^{1/p}.
$$
If $\cO_T = (0,\infty) \times \fR^d$, we simply put $\bL_p(\cO_T) =\bL_p$ and $\bL_p(\cO_T,l_2) =\bL_p(l_2)$.

Denote $\fR_+=(0,\infty)$ and let $K(r,t,z,x)=K(\omega,r,t,z,x)$ be a $\mathcal{P}\otimes \mathcal{B}(\fR_+)\otimes \mathcal{B}(\cO)\otimes  \mathcal{B}(\cO)$-measurable function such that 
 $K(r,t,z,x)=0$ if $r \geq t$.  For $g=(g^1,g^2,\ldots) \in \bL_p(\cO, l_2)$ and $(t,x) \in \cO_T$,
define
\begin{align*}
\bT_{\varepsilon}  g(t,x)
:=\int_0^{t-\varepsilon} \int_{\cO} K(r,t, z,x) g^k(r,z)dz dW_r^k
\end{align*}
and
\begin{align*}
\bT  g(t,x)
&:=\int_0^{t} \int_{\cO} K(r,t, z,x) g^k(r,z)dz dW_r^k \\
&:= \lim_{\varepsilon \downarrow 0}\bT_{\varepsilon}  g(t,x),
\end{align*}
where the sense of convergence will be specified in Assumption \ref{main as}.

\begin{defn}
       \label{defn metric}
Let $D$ be a subset of $\fR^{d+1}$.
A function $\rho(X,Y)$ defined on $D \times D$
is called a quasi-metric iff the following four properties hold:

(i) $\rho(X,Y) \geq 0$ for all $X,Y \in D $

(ii) $\rho(X,Y) = 0$ iff $X=Y$

(iii) $\rho(X,Y) = \rho(Y,X)$ for all $X,Y \in D$

(iv) There exists a constant $N_\rho \geq 1$ such that $\rho(X,Y) \leq N_\rho \left(\rho(X,Z) + \rho(Z,Y)\right)$
for all $X,Y,Z \in D$.
\end{defn}

Define balls related to the quasi-metric $\rho$ as
\begin{align*}
B_c(X):= \{ Z \in D : \rho(X,Z) < c \} , \quad X\in D,~ c>0.
\end{align*}
Note that the center $X$ of the ball $B_c(X)$ is always  in $D$.

Throughout the article we assume that  the quasi-metric $\rho$ satisfies the doubling ball condition on $D$, that is, 
for any $\gamma>0$ there exists a constant $N_\gamma$ so that
\begin{align}
                        \label{double}
| B_{\gamma c}(X)| \leq N_\gamma |B_c(X)| \quad \forall c>0,~X \in D.
\end{align}

For a locally integrable function $f$ on $D$, define its sharp function as
\begin{align*}
f^\sharp (t,x)
&:= \sup \aint_{B_c(Y)} |f(r,z) - f_{B_c(Y)}|~drdz \\
&:=\sup \frac{1}{|B_c(Y)|}\int_{B_c(Y)} |f(r,z) - f_{B_c(Y)}|~drdz\\
&\approx \sup \frac{1}{|B_c(Y)|^2}\int_{B_c(Y)} \int_{B_c(Y)} |f(Z) - f(Z')|~dZ dZ' \end{align*}
where the sup is taken over all $B_c(Y)$ containing $X=(t,x)$ and
$$
f_{B_c(Y)}=\aint_{B_c(Y)} f(r,z)~drdz.
$$
Similarly, the maximal function $\cM f(t,x)$ is defined as
$$
\cM f (t,x) := \sup \aint_{B_c(Y)} |f(r,z) |~drdz,
$$
where the sup is taken over all $B_c(Y)$ containing $X=(t,x)$.

Below is a version of  Hardy-Littlewood and Fefferman-Stein theorems.

\begin{thm}
For any $p>1$,  
\begin{align}
                    \label{as hlfs}
 \|\cM f \|_{L_p(D)}  \leq N(p)\|f \|_{L_p(D)},
\quad \forall f \in L_p(D).
\end{align}
Furthermore, if $|D|=\infty$, then
\begin{equation}
           \label{as hlfs2}
\|f \|_{L_p(D)}  \leq N_p \|f^\sharp \|_{L_p(D)}
\end{equation}
\end{thm}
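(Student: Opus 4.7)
The plan is to observe that $(D, \rho, dX)$, with $dX$ the Lebesgue measure on $\fR^{d+1}$ restricted to $D$, is a space of homogeneous type in the sense of Coifman--Weiss thanks to \eqref{double}, and then to run the classical proofs of the Hardy--Littlewood and Fefferman--Stein theorems in that setting.

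For \eqref{as hlfs}, the first step is a Vitali-type covering lemma: given any family $\{B_{c_i}(X_i)\}_{i\in I}$ of quasi-metric balls with $\sup_i c_i <\infty$, there is a pairwise disjoint subfamily $\{B_{c_{i_k}}(X_{i_k})\}$ with $\bigcup_{i\in I} B_{c_i}(X_i) \subset \bigcup_k B_{K c_{i_k}}(X_{i_k})$, where $K$ depends only on $N_\rho$. This follows from the usual greedy argument: at each step pick the largest remaining ball disjoint from those already chosen, and use the quasi-triangle inequality to conclude that any unpicked ball is dominated by some $B_{K c_{i_k}}(X_{i_k})$. Combined with \eqref{double}, this yields the weak-type $(1,1)$ bound $|\{\cM f > \lambda\}| \leq N\lambda^{-1}\|f\|_{L_1(D)}$: for each $X$ with $\cM f(X) > \lambda$ select a ball $B_c(Y) \ni X$ with $\dashint_{B_c(Y)}|f| > \lambda$, extract a disjoint subfamily, and sum, using $|B_{K c_{i_k}}| \leq N_K |B_{c_{i_k}}|$. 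Together with the trivial $\|\cM f\|_{L_\infty}\leq \|f\|_{L_\infty}$, Marcinkiewicz interpolation gives \eqref{as hlfs} for all $p>1$.

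For \eqref{as hlfs2}, the route is a good-$\lambda$ inequality. The plan is to show that there exist $\beta>1$ and $C>0$, depending only on $N_\rho$ and the doubling constants, so that
\begin{align*}
|\{\cM f > \beta \lambda,\, f^\sharp \leq \gamma \lambda\}| \leq C\gamma\, |\{\cM f > \lambda\}|
\end{align*}
for every $\gamma\in(0,1)$ and $\lambda > 0$. Proving this requires decomposing $\{\cM f > \lambda\}$ into a controlled union of quasi-metric balls (a Whitney / Calder\'on--Zygmund stopping-type decomposition, provided by Christ's dyadic cubes on $D$); on each stopping ball the average of $|f|$ is comparable to $\lambda$ while the oscillation is at most $N\gamma\lambda$ whenever $f^\sharp \leq \gamma\lambda$ there, which is what separates the thresholds $\beta\lambda$ and $\lambda$. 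Multiplying by $p\lambda^{p-1}$ and integrating in $\lambda$ on $(0,\infty)$, one absorbs a fraction of $\|\cM f\|_{L_p}^p$ into the left-hand side for $\gamma$ small (depending on $p$, $\beta$, $C$), giving $\|\cM f\|_{L_p(D)} \leq N_p\|f^\sharp\|_{L_p(D)}$. Since $|f| \leq \cM f$ a.e.\ by the Lebesgue differentiation theorem in spaces of homogeneous type, \eqref{as hlfs2} follows. The hypothesis $|D|=\infty$ is used to exclude nonzero constants from $L_p(D)$, which matters because $f^\sharp$ annihilates them.

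The main obstacles are twofold. First, the Whitney/Christ decomposition: in Euclidean space one uses dyadic cubes, but on a general quasi-metric space satisfying only \eqref{double} one must appeal to Christ's construction of dyadic sets, which is technical although well documented. Second, the absorption step in the good-$\lambda$ argument requires $\|\cM f\|_{L_p(D)}$ to be \emph{a priori} finite; this is handled by the standard truncation $f \mapsto f\,1_{\{|f|\leq n\}\cap B_n(X_0)}$ followed by a monotone-convergence passage, using \eqref{as hlfs} on the truncation. Once Vitali and Christ's dyadic cubes are in hand, every remaining step is a direct transcription of the classical Euclidean argument.
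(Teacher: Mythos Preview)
The paper does not give its own proof of this theorem: immediately after the statement it writes ``For the proof of this theorem, see e.g.\ \cite[Theorem 2.2 and Theorem 2.4]{dongkim2016},'' and moves on. So there is nothing to compare your argument against except a citation.

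Your outline is the standard route for spaces of homogeneous type and is correct. The Vitali covering plus doubling gives the weak-$(1,1)$ bound, and with the trivial $L_\infty$ bound Marcinkiewicz yields \eqref{as hlfs}; for \eqref{as hlfs2} the good-$\lambda$ inequality built on Christ's dyadic cubes (or an equivalent Whitney-type decomposition) is exactly the right mechanism, and you have correctly flagged the two technical points that need care --- the existence of the dyadic system on a quasi-metric doubling space, and the a priori finiteness of $\|\cM f\|_{L_p}$ needed for the absorption step, handled by truncation. The role of $|D|=\infty$ is also identified correctly. In short, your proposal supplies a self-contained argument for a result the paper simply imports from the literature.
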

For the proof of this theorem, see e.g. \cite[Theorem 2.2 and Theorem 2.4]{dongkim2016}.
If (\ref{as hlfs2}) holds, we say $\rho$ admits  the Feffreman-Stein theorem ({\bf{FS}}), which obviously holds if $|D|=\infty$.

\begin{assumption}[$L_2$-boundedness]
                    \label{main as}
For each $g \in \bL_2(\cO_T,l_2)$,
$\bT_{\varepsilon}  g(t,x)$ converges  in $\bL_2(O_T)$ as $\varepsilon \downarrow 0$.
 Moreover the operator $g \mapsto \bT g$ is bounded from $\bL_2(\cO_T,l_2)$ to $\bL_2(\cO_T)$, i.e.,
there exists a constant $N_0$ such that for all $g \in \bL_2(\cO_T,l_2)$,
\begin{align}
                        \label{l2 as}
\|\bT g\|_{\bL_2(\cO_T)} \leq N_0\|g\|_{\bL_2(\cO_T,l_2)}.
\end{align}
\end{assumption}

\begin{assumption}[A stochastic  H\"ormander condition]
                        \label{as hor}
There exist positive constants $C_0$ and $N_1$ such that for all $X=(t,x), Y=(s,y), Z=(r,z) \in \cO_T$,
\begin{align}
                            \label{hor eq}
\sup_{\omega\in \Omega}\sup_{X,Y}\int_{0}^T \left[ \int_{\rho(X,Z) \geq C_0 \rho(X,Y)} | K(r,t, z,x) - K(r,s, z,y)| ~dz\right]^2 dr
\leq N_1,
\end{align}
where $\rho$ is a quasi-metric admitting  {\bf FS}.
\end{assumption}

By $\bL(\cO_T,l_2)$ (simply $\bL(l_2)$ if $\cO_T=(0,\infty)\times \fR^d$), we denote the space of the processes $g=(g^1,g^2, \ldots)$ such that $g^k=0$ for all large $k$ and each $g^k$ is  of the type
$$
g^k(t,x)= \sum_{i=1}^{j(k)}1_{(\tau_{i-1},\tau_i]}(t) g^{ik}(x),
$$
where  $g^{ik} \in C_0^\infty(\cO)$, and $\tau_i$ are stopping times so that $\tau_i \leq T$.
It is known that $\bL(\cO_T,l_2)$ is dense in $\bL_p(\cO_T,l_2)$ for all $p \geq 1$ (for instance, see \cite[Theorem 3.10]{Krylov1999}) if $\cO_T= (0,T) \times \fR^d$.
The idea of \cite[Theorem 3.10]{Krylov1999} is easily applied even for general $\cO_T$.

Here is our main result.

\begin{thm}
                    \label{main thm}
Suppose that Assumptions \ref{main as} and \ref{as hor} hold.
Then for any $p > 2$, the operator $\bT$ can be continuously extended from $\bL(\cO_T,l_2)$ to  $\bL_p(\cO_T,l_2)$.
Moreover, for any $g \in \bL_p(\cO_T,l_2)$,
\begin{align*}
 \|\bT g\|_{\bL_p(\cO_T)} \leq N(d,p,C_0,N_0,N_1)\|g\|_{\bL_p(\cO_T,l_2)}.
\end{align*}
\end{thm}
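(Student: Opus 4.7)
My plan is to adapt the Calder\'on--Zygmund/Fefferman--Stein paradigm to the stochastic setting. First I would apply (\ref{as hlfs2}) pointwise in $\omega$ to the random space-time function $\bT g$ on $\cO_T$, which gives $\|\bT g\|_{\bL_p(\cO_T)} \le N_p \|(\bT g)^\sharp\|_{\bL_p(\cO_T)}$ and reduces matters to bounding $\|(\bT g)^\sharp\|_{\bL_p(\cO_T)}$ by $\|g\|_{\bL_p(\cO_T,l_2)}$. By the density statement following Assumption \ref{as hor}, I may restrict to $g \in \bL(\cO_T,l_2)$, so that all stochastic integrals in play are defined in the classical sense.

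Next, fix $X_0 \in \cO_T$ and a quasi-metric ball $B = B_c(Y) \ni X_0$. Using the quasi-triangle inequality together with the doubling condition (\ref{double}), I would choose an enlarged concentric ball $B^* = B_{C_1 c}(Y)$, with $C_1$ depending only on $C_0$ and $N_\rho$, so that $\rho(X_1,Z) \ge C_0 \rho(X_1,X_2)$ holds for every $X_1, X_2 \in B$ and every $Z \notin B^*$---precisely the integration domain appearing in the H\"ormander hypothesis (\ref{hor eq}). Splitting $g = g' + g''$ with $g' := g\, 1_{B^*}$ yields
$$
\aint_B \aint_B |\bT g(X_1) - \bT g(X_2)|\,dX_1 dX_2 \le 2\aint_B |\bT g'|\,dX + \aint_B \aint_B |\bT g''(X_1) - \bT g''(X_2)|\,dX_1 dX_2.
$$
For the local piece, Cauchy--Schwarz in space-time, combined with Assumption \ref{main as} and the doubling of $\rho$, should give $\bigl(\bE\bigl[\aint_B |\bT g'|\,dX\bigr]^2\bigr)^{1/2} \le N [\cM(\bE|g|_{l_2}^2)(X_0)]^{1/2}$. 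For the nonlocal piece, since $(X_1,X_2)$ is deterministic, It\^o's isometry turns $\bE|\bT g''(X_1) - \bT g''(X_2)|^2$ into $\bE\int_0^T \sum_k |\int_{(B^*)^c} \Delta K\,g^k\,dz|^2 dr$ with $\Delta K := K(r,t_1,z,x_1) - K(r,t_2,z,x_2)$; a Cauchy--Schwarz in $z$ weighted by $|\Delta K|\,dz$, followed by a Cauchy--Schwarz in $r$ using the H\"ormander bound $\int_0^T \alpha(r)^2\,dr \le N_1$ with $\alpha(r) := \int_{(B^*)^c}|\Delta K|\,dz$, should furnish the matching majorant $N N_1\,\cM(\bE|g|_{l_2}^2)(X_0)$, uniformly in $X_1, X_2 \in B$.

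Combining the two estimates yields a mean-square sharp-function bound of the form $(\bE[(\bT g)^\sharp(X_0)]^2)^{1/2} \le N [\cM(\bE|g|_{l_2}^2)(X_0)]^{1/2}$, after which the Hardy--Littlewood inequality (\ref{as hlfs}) at the index $p/2$---which exceeds $1$ exactly because $p > 2$---closes the estimate. The main obstacle, I expect, is reconciling the $\omega$-pointwise sharp function that Fefferman--Stein controls with the inherently $\omega$-averaged output of It\^o's isometry. The natural bridge is a Doob/BDG-type maximal inequality upgrading the mean-square bound to a pointwise one, or equivalently an application of the parabolic Littlewood--Paley inequality announced for Section 3, which reshapes the problem so that the Fefferman--Stein machinery applies legitimately; this is the technical heart of the argument.
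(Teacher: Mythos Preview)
Your diagnosis in the final paragraph is correct, and the fix you propose there---apply BDG first, then run the Fefferman--Stein machinery on the resulting deterministic square function---is exactly the paper's route. The paper's proof of Theorem~\ref{main thm} is two lines: for $g\in\bL(\cO_T,l_2)$, Fubini and BDG give
\[
\bE\int_0^T\!\!\int_{\cO}|\bT g|^p\,dx\,dt \le N(p)\,\bE\int_0^T\!\!\int_{\cO}|\cG g|^p\,dx\,dt,
\]
where $\cG g(t,x)=\bigl(\int_0^t|\int_\cO K(r,t,z,x)g(r,z)\,dz|_{l_2}^2\,dr\bigr)^{1/2}$ is the deterministic square function from (\ref{g definition}); then Theorem~\ref{main thm 2} (the parabolic Littlewood--Paley inequality) bounds the right side by $N\|g\|^p_{\bL_p(\cO_T,l_2)}$. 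All the Calder\'on--Zygmund work is done in Section~3 on $\cG$, \emph{pointwise in $\omega$}, so the mismatch you worried about never arises.

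By contrast, the direct plan in your paragraphs 2--3 really is blocked, not merely inconvenient. Fefferman--Stein applied $\omega$-pointwise demands control of $\bE|(\bT g)^\sharp(X_0)|^p$, while It\^o's isometry on the nonlocal piece only yields $\bE|(\bT g)^\sharp(X_0)|^2$; there is no way to upgrade the second moment at a fixed $X_0$ to the $p$-th without reintroducing exactly the square-function reduction you deferred. A second, smaller issue: even after passing to $\cG$, the H\"ormander bound (\ref{hor eq}) on the far piece $g''$ does not produce the maximal-function majorant $\cM(|g|_{l_2}^2)^{1/2}$ you claim---the integration in $z$ runs over the unbounded set $(B^*)^c$. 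What it yields is the $L_\infty$ bound $|G g''|\le N_1^{1/2}\|g''\|_{L_\infty(\cO_T;l_2)}$ (Lemma~\ref{lem 3}), and the paper then closes via a level-set splitting $f=f\,1_{|f|>\delta\lambda}+f\,1_{|f|\le\delta\lambda}$ and the distribution-function computation at the end of Section~3, rather than a direct pointwise sharp-function inequality.
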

The proof of this theorem will be given in Section \ref{pf of main thm}.

\mysection{Parabolic Littlewood-Paley inequality}

For  $l_2$-valued measurable functions $f=(f^1,f^2,\cdots)$ on $\cO_T$, denote
\begin{align}
                        \notag
\cG  f(t,x)
&:=  \left[\int_{0}^t \left|\int_{\cO}K(r,t,z,x) f(r,z)dz\right|^2_{l_2} ~dr \right]^{1/2} \\
                        \label{g definition}
&:=  \lim_{\varepsilon \downarrow 0}\left[\int_{0}^{t-\varepsilon} \left|\int_{\cO}K(r,t, z,x) f(r,z)dz\right|^2_{l_2} ~dr \right]^{1/2}.
\end{align}
In this section we study the boundedness of operator $\cG$ in $L_p(\cO_T; l_2)$. Since the integral above is deterministic one may assume that  the kernel $K$ is nonrandom throughout this section.

Theorem \ref{main thm 2} below is the main result of this section which we call  ``Parabolic Littlewood-Paley inequality". This inequality was first proved by Krylov for  $K(r,t,x,y)=\nabla_x p(t-r,x-y)$, where $p(t,x)=\frac{1}{(4\pi t)^{d/2}}e^{-|x|^2/(4t)}$ is the heat kernel.  If $f$ is independent of $t$ then parabolic Littlewood-Paley inequality with $K=\nabla_x p(t-r,x-y)$ leads to the classical (elliptic) Littlewood-Paley inequality (for instance, see \cite[Section 1]{kim2016parabolic}).
\begin{thm}[Parabolic Littlewood-Paley inequality]
                    \label{main thm 2}
Let $p \geq 2$.
Suppose  Assumptions \ref{main as} and  \ref{as hor} hold. Then for any $f \in L_2(\cO_T;l_2) \cap L_\infty(\cO_T;l_2)$,
\begin{align*}
\|\cG f\|_{L_p(\cO_T)} \leq N \|f\|_{L_p(\cO_T;l_2)},
\end{align*}
where $N$ depends only on $d$, $p$, $C_0$, $N_1$, and $N_2$.
\end{thm}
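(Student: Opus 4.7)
The plan is a Calder\'on--Zygmund sharp-function argument that lifts the $L^2$-bound to $L^p$ for $p>2$ via the Fefferman--Stein inequality \eqref{as hlfs2} and the Hardy--Littlewood inequality \eqref{as hlfs}, both available since $\rho$ admits $\textbf{FS}$. The $L^2$ case itself follows directly from Assumption~\ref{main as}: regarding $f \in L_2(\cO_T;l_2)$ as a deterministic ($\omega$-independent) predictable process, It\^o isometry gives $\bE|\bT f(t,x)|^2 = (\cG f(t,x))^2$ pointwise, whence $\|\cG f\|_{L_2(\cO_T)} \leq N_0\|f\|_{L_2(\cO_T;l_2)}$.

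For $p>2$, applying \eqref{as hlfs2} to the scalar function $(\cG f)^2$ at the exponent $p/2>1$ followed by \eqref{as hlfs} shows that the theorem reduces to the pointwise sharp-function bound
$$
\bigl((\cG f)^2\bigr)^{\sharp}(X) \leq N\,\cM\bigl(|f|^2_{l_2}\bigr)(X), \qquad X \in \cO_T,
$$
which implies $\|\cG f\|_{L_p(\cO_T)}^2 = \|(\cG f)^2\|_{L_{p/2}(\cO_T)} \leq N\|((\cG f)^2)^{\sharp}\|_{L_{p/2}(\cO_T)} \leq N\|\cM(|f|^2_{l_2})\|_{L_{p/2}(\cO_T)} \leq N\|f\|_{L_p(\cO_T;l_2)}^2$. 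To prove this pointwise bound I would fix $X \in \cO_T$ and a ball $B=B_c(Y)\ni X$, use the quasi-triangle inequality to pick $C' = C'(N_\rho,C_0)$ so large that $\rho(Z,W)\geq C_0 \rho(Z,Z')$ for all $Z, Z' \in B$ and $W \notin B^*:=B_{C'c}(Y)$, and split $f = f_1+f_2$ with $f_1 := f\cdot 1_{B^*}$. The sublinearity of $\cG$ (triangle inequality in the $L^2(l_2)$-norm defining $\cG$) combined with $|a^2-b^2| \leq (a+b)|a-b|$ reduces the sharp norm to three pieces: the average of $(\cG f_1)^2$ on $B$, a cross-average $\aint_B \cG f_1 \cdot \cG f_2\,dZ$, and the oscillation of $(\cG f_2)^2$ on $B$. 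The first is controlled by the $L^2$-bound applied to $f_1$ together with the doubling condition \eqref{double}, yielding $\aint_B (\cG f_1)^2 \leq N|B^*|^{-1}\|f\,1_{B^*}\|_{L_2(l_2)}^2 \leq N\cM(|f|^2_{l_2})(X)$. The cross-term is handled through Cauchy--Schwarz and the same $L^2$ bound, reducing it via a Young-type inequality $2ab \leq \epsilon a^2 + \epsilon^{-1} b^2$ whose $\cM((\cG f)^2)$-piece is absorbed back into the left-hand side after taking the $L_{p/2}$-norm.

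The oscillation of $(\cG f_2)^2$ is the heart of the argument. By the triangle inequality in $L_2((0,T);l_2)$,
$$
|\cG f_2(Z) - \cG f_2(Z')|^2 \leq \int_0^T \Bigl|\int_{(B^*)^c}\bigl[K(r,t_Z,z,x_Z)-K(r,t_{Z'},z,x_{Z'})\bigr]f(r,z)\,dz\Bigr|^2_{l_2}dr,
$$
and Cauchy--Schwarz componentwise in $l_2$ followed by Cauchy--Schwarz in $r$ separates two factors: one is $\bigl(\int_0^T(\int_{(B^*)^c}|K-K'|\,dz)^2\,dr\bigr)^{1/2} \leq \sqrt{N_1}$ by Assumption~\ref{as hor}, and the other is the weighted integral $\bigl(\int_0^T(\int_{(B^*)^c}|K-K'|\,|f|^2_{l_2}\,dz)^2\,dr\bigr)^{1/2}$.

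The main obstacle is controlling this last weighted integral by $\cM(|f|^2_{l_2})(X)$. Because the H\"ormander condition supplies only an $L^2(dr)$-bound on $\int_{(B^*)^c}|K-K'|\,dz$ with no pointwise or annular decay of the kernel, one cannot simply replace $|f|^2_{l_2}$ by its $L_\infty$ norm without losing the correct $p$-scaling. The remedy is a dyadic decomposition of $(B^*)^c$ into shells $R_j := \{2^jC'c \leq \rho(Y,\cdot) < 2^{j+1}C'c\}$ together with repeated use of Assumption~\ref{as hor} at different reference scales and the doubling property \eqref{double}, so that the contribution of each shell is bounded by a uniform H\"ormander factor times an average of $|f|^2_{l_2}$ over $R_j$, each of which is dominated by $\cM(|f|^2_{l_2})(X)$; the resulting geometric series in $j$ converges. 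The hypothesis $f\in L_\infty(\cO_T;l_2)$ is used only to guarantee the finiteness of the intermediate quantities, and a standard density argument extends the final estimate to all of $L_p(\cO_T;l_2)$.
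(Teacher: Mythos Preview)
Your dyadic-shell step does not go through, and this is a genuine gap rather than a missing detail. Assumption~\ref{as hor} gives only
\[
\int_0^T\Bigl[\int_{\rho(X,Z)\geq C_0\rho(X,Y)}|K(r,t,z,x)-K(r,s,z,y)|\,dz\Bigr]^2dr\leq N_1
\]
uniformly in $X,Y$; it supplies no decay as one passes to farther annuli. Applying it ``at different reference scales'' to the shells $R_j=\{2^jC'c\leq\rho(Y,\cdot)<2^{j+1}C'c\}$ still returns the same bound $N_1$ for every $j$, so the series $\sum_j$ has no reason to converge. More generally, a pointwise inequality of the form $((\cG f)^2)^{\sharp}\leq N\,\cM(|f|^2_{l_2})$ is a much stronger conclusion than what a bare H\"ormander condition delivers; in the classical scalar theory this kind of sharp-maximal bound requires an $L^r$-H\"ormander or pointwise regularity hypothesis, whereas the plain H\"ormander condition yields only $L^\infty\to\mathrm{BMO}$. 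Your cross-term absorption is also not justified: you would need $\|\cG f\|_{L_p(\cO_T)}<\infty$ a~priori, which at this stage you do not have.

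The paper avoids this obstacle by \emph{not} attempting a pointwise bound in terms of $\cM(|f|^2_{l_2})$. Instead, it first proves (Lemmas~\ref{lem 1}--\ref{lem 4}) that for any splitting $f=f_1+f_2$ with $f_2\in L_\infty$,
\[
(\cG f)^{\sharp}(X)\leq 2\,\cM(\cG f_1)(X)+N\|f_2\|_{L_\infty(\cO_T;l_2)},
\]
using the H\"ormander condition only to bound the far-part oscillation by $\sqrt{N_1}\,\|f_2\|_{L_\infty}$ (Lemma~\ref{lem 3})---exactly the $L^\infty\to\mathrm{BMO}$ mechanism. The passage to $L_p$ then comes from a \emph{level-set} decomposition rather than a spatial one: for each $\lambda>0$ set $f_{1,\lambda}=f\,1_{|f|>\delta\lambda}$, $f_{2,\lambda}=f\,1_{|f|\leq\delta\lambda}$, so that $\|f_{2,\lambda}\|_{L_\infty}\leq\delta\lambda$ and, after choosing $\delta$ small,
\[
\{(\cG f)^{\sharp}\geq\lambda\}\subset\{4\,\cM(\cG f_{1,\lambda})\geq\lambda\}.
\]
Integrating $\lambda^{p-1}$ against these level sets and using Chebyshev, \eqref{as hlfs}, and the $L_2$-bound on $\cG$ gives $\|(\cG f)^{\sharp}\|_{L_p}^p\leq N\int_0^\infty\lambda^{p-3}\|f_{1,\lambda}\|_{L_2}^2\,d\lambda=N\|f\|_{L_p}^p$ (here $p>2$ is used). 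The key idea you are missing is this $\lambda$-dependent truncation of $f$ by magnitude, which trades the unavailable annular kernel decay for the trivially available $L_\infty$ bound on $f_{2,\lambda}$.
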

The proof of this theorem will be given at the end of this section.

\begin{lemma}
                    \label{l2 bd}
Suppose Assumption \ref{main as} holds.
Then for each $f \in L_2(\cO_T;l_2)$, $\cG f(t,x)$ is finite  almost everywhere, and moreover
 the operator $f \rightarrow \cG f$ is a bounded operator from $L_2(\cO_T;l_2)$ to $L_2(\cO_T)$.
\end{lemma}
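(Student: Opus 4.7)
The plan is to observe that, because $K$ is nonrandom throughout this section, the It\^o isometry converts the $L_2(\Omega)$-norm of $\bT_\varepsilon f$ (for deterministic $f$) into precisely the quantity under the square root defining $\cG_\varepsilon f$. So the lemma is essentially a reformulation of Assumption \ref{main as}.

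First, given $f \in L_2(\cO_T;l_2)$, regard $f$ as an element of $\bL_2(\cO_T,l_2)$ in the trivial (deterministic) way; since the probability measure on $\Omega$ is normalized, $\|f\|_{\bL_2(\cO_T,l_2)} = \|f\|_{L_2(\cO_T;l_2)}$. Write $\cG_\varepsilon f(t,x)$ for the expression inside the limit in (\ref{g definition}), i.e.\ with $\int_0^t$ replaced by $\int_0^{t-\varepsilon}$. By the It\^o isometry and the independence of the Wiener processes $W^k$,
\begin{align*}
\bE|\bT_\varepsilon f(t,x)|^2
&= \int_0^{t-\varepsilon} \left|\int_{\cO} K(r,t,z,x) f(r,z)\,dz\right|^2_{l_2} dr
= |\cG_\varepsilon f(t,x)|^2 .
\end{align*}
Integrating over $\cO_T$ and applying (\ref{l2 as}) yields $\|\cG_\varepsilon f\|_{L_2(\cO_T)} = \|\bT_\varepsilon f\|_{\bL_2(\cO_T)}$, and the right-hand side is bounded uniformly in $\varepsilon$ because $\bT_\varepsilon f \to \bT f$ in $\bL_2(\cO_T)$ with $\|\bT f\|_{\bL_2(\cO_T)} \leq N_0\|f\|_{L_2(\cO_T;l_2)}$.

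Next, the integrand $r \mapsto |\int_\cO K(r,t,z,x) f(r,z)\,dz|^2_{l_2}$ is nonnegative, so as $\varepsilon \downarrow 0$ the monotone convergence theorem gives $|\cG_\varepsilon f(t,x)|^2 \uparrow |\cG f(t,x)|^2$ pointwise in $(t,x)$; in particular the limit defining $\cG f$ exists in $[0,\infty]$ unconditionally. A second application of the monotone convergence theorem in the $(t,x)$-integral then gives
\begin{align*}
\|\cG f\|^2_{L_2(\cO_T)}
= \lim_{\varepsilon \downarrow 0} \|\cG_\varepsilon f\|^2_{L_2(\cO_T)}
= \|\bT f\|^2_{\bL_2(\cO_T)}
\leq N_0^2\|f\|^2_{L_2(\cO_T;l_2)} .
\end{align*}
This proves the claimed $L_2$-boundedness, and in particular $\cG f \in L_2(\cO_T)$, so $\cG f(t,x) < \infty$ for almost every $(t,x) \in \cO_T$.

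I do not anticipate any genuine obstacle: the only subtlety is to justify that the $\varepsilon \downarrow 0$ limit defining $\cG f$ makes sense pointwise (handled by monotone convergence because of the nonnegative integrand) and that the constant $N_0$ appearing in the final bound is the same one from Assumption \ref{main as} (which follows because $\bT_\varepsilon f \to \bT f$ in $\bL_2$ implies convergence of $\bL_2$-norms).
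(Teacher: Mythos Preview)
Your proof is correct and follows the same approach as the paper: view deterministic $f$ as an element of $\bL_2(\cO_T,l_2)$, use the It\^o isometry to identify $\|\cG f\|_{L_2(\cO_T)}$ with $\|\bT f\|_{\bL_2(\cO_T)}$, and then invoke Assumption \ref{main as}. The paper's version is terser, writing the identity $\int|\cG f|^2 = \bE\int|\bT f|^2$ directly without the intermediate $\varepsilon$-step, whereas you supply the monotone convergence justification for passing $\varepsilon\downarrow 0$; this extra care is appropriate and does not change the underlying idea.
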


\begin{proof}
Obviously, since $f$ is nonrandom, $f \in   \bL_2(l_2)$. By It\^o's isometry and (\ref{l2 as}),
\begin{align*}
\int_0^T \int_{\cO}|\cG  f(t,x)|^2dxdt
&=\bE\int_0^T \int_{\cO}|\bT  f(t,x)|^2dxdt \\
& \leq N_0\bE\int_0^T \int_{\cO}|f(t,x)|_{l_2}^2dxdt \\
&=N_0\int_0^T \int_{\cO}|f(t,x)|_{l_2}^2dxdt.
\end{align*}
Thus the lemma is proved.
\end{proof}

Denote
$$
\cK f(r,t,x)= \int_{\cO}K(r,t, z,x) f(r, z)dz
$$
and
\begin{align*}
Gf(t,s,x,y)=\left[\int_{0}^T  | \cK f(r,t, x)  -  \cK f(r,s,y)|_{l_2}^2 ~dr \right]^{1/2}.
\end{align*}
Observe
$$
\cG  f(t,x)
= \left[\int_{0}^t \left|\cK f(r,t,x)\right|^2_{l_2} ~dr \right]^{1/2}
$$
and
$$
Gf(t,s,x,y)=\left[\int_{0}^T  | 1_{r < t}\cK f(r,t, x)  -  1_{r < s}\cK f(r,s,y)|_{l_2}^2 ~dr \right]^{1/2},
$$
where the last equality is due to the assumption that $K(r,t,z,x)=0$ if $t \leq r$.
\begin{lemma}
                    \label{lem 1}
Let $ (t_1,x_1) \in  B_c(X_0)$ and   suppose Assumption \ref{main as} holds.
Then for any $f_1, f_2 \in L_2(\cO_T;l_2)$,
\begin{align}
                    \notag
& \aint_{B_c(X_0)} \aint_{B_c(X_0)} |\cG (f_1+f_2)(t,x)-\cG (f_1+f_2)(s,y)|~dtdxdsdy  \\
                        \label{0326 eqn 2}
&\leq 2\cM (\cG f_1) (t_1,x_1) +\aint_{B_c(X_0)} \aint_{B_c(X_0)} Gf_2(t,s,x,y) ~dt dx ds dy.
\end{align}
\end{lemma}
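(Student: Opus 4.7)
The plan is to exploit the fact that $\cG f$ is itself a norm of a vector-valued quantity, so both forward and reverse triangle inequalities apply cleanly; the rest is a standard averaging argument.

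First I would decompose the integrand by inserting and subtracting $\cG f_2$:
\begin{align*}
|\cG(f_1+f_2)(t,x)-\cG(f_1+f_2)(s,y)|
&\leq |\cG(f_1+f_2)(t,x)-\cG f_2(t,x)|\\
&\quad+|\cG f_2(t,x)-\cG f_2(s,y)|\\
&\quad+|\cG f_2(s,y)-\cG(f_1+f_2)(s,y)|.
\end{align*}
Since $\cG h(t,x)=\|1_{r<t}\cK h(r,t,x)\|_{L_2((0,T);l_2)}$ and $\cK$ is linear in the integrand, Minkowski's inequality applied to the first and third differences gives
\begin{align*}
|\cG(f_1+f_2)(t,x)-\cG f_2(t,x)|&\leq \cG f_1(t,x),\\
|\cG(f_1+f_2)(s,y)-\cG f_2(s,y)|&\leq \cG f_1(s,y),
\end{align*}
while the reverse triangle inequality on the middle term yields
\begin{equation*}
|\cG f_2(t,x)-\cG f_2(s,y)|\leq \bigl\|1_{r<t}\cK f_2(r,t,x)-1_{r<s}\cK f_2(r,s,y)\bigr\|_{L_2((0,T);l_2)}=Gf_2(t,s,x,y),
\end{equation*}
where I used the convention $K(r,t,z,x)=0$ for $r\geq t$ (so the truncation at $t$ in the definition of $\cG$ agrees with integration up to $T$).

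Next I would average both sides over $(t,x),(s,y)\in B_c(X_0)\times B_c(X_0)$. The bound on the first and third terms produces $\aint_{B_c(X_0)}\cG f_1(t,x)\,dtdx$ (twice, by symmetry). Since $(t_1,x_1)\in B_c(X_0)$, the ball $B_c(X_0)$ is admissible in the supremum defining $\cM(\cG f_1)(t_1,x_1)$, and hence
\begin{equation*}
\aint_{B_c(X_0)}\cG f_1(t,x)\,dtdx\leq \cM(\cG f_1)(t_1,x_1).
\end{equation*}
Combining the three contributions gives exactly the bound
\begin{equation*}
\aint_{B_c(X_0)}\aint_{B_c(X_0)}|\cG(f_1+f_2)(t,x)-\cG(f_1+f_2)(s,y)|\,dtdxdsdy\leq 2\cM(\cG f_1)(t_1,x_1)+\aint_{B_c(X_0)}\aint_{B_c(X_0)}Gf_2(t,s,x,y)\,dtdxdsdy.
\end{equation*}

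There is no real obstacle here; the only subtlety worth checking is the finiteness of $\cG f_1$ and $\cG f_2$ almost everywhere (so the pointwise triangle inequalities are legitimate and not of the form $\infty-\infty$), which is guaranteed by Lemma~\ref{l2 bd} under Assumption~\ref{main as}. The quasi-metric structure and doubling condition are not needed for the pointwise step but implicitly enter through the definition of $\cM$ and the admissibility of the average over $B_c(X_0)$.
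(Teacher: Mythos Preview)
Your proof is correct and follows essentially the same approach as the paper: both arguments reduce the pointwise oscillation of $\cG(f_1+f_2)$ to $\cG f_1(t,x)+\cG f_1(s,y)+Gf_2(t,s,x,y)$ via the (reverse) triangle inequality for the $L_2((0,T);l_2)$-norm, then average and bound $\aint_{B_c(X_0)}\cG f_1$ by $\cM(\cG f_1)(t_1,x_1)$. The only cosmetic difference is that the paper first applies the reverse triangle inequality to $|\cG f(t,x)-\cG f(s,y)|$ and then splits $f=f_1+f_2$ inside the norm, whereas you insert $\cG f_2$ first; the resulting pointwise bound is identical.
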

\begin{proof}
Set $f=f_1+f_2$ and let $(t,x),(s,y) \in B_c(X_0)$.
By Lemma  \ref{l2 bd} we may assume
$$
\cG (f_1)(t,x)+\cG (f_2)(t,x)+\cG (f_1)(s,y)+\cG (f_2)(s,y) <\infty.
$$
Then by Minkowski's inequality,
\begin{align*}
&|\cG f(t,x) - \cG f (s,y)| \\
&= \left|\left[\int_{0}^T 1_{r<t} |\cK f(r,t,x)|_{l_2}^2~dr \right]^{1/2}
-\left[\int_{0}^T 1_{r<s} |\cK f(r,s, y) |_{l_2}^2 ~dr \right]^{1/2} \right| \\
&\leq \left[\int_{0}^T  \Big|1_{r<t} \cK f(r,t,x) - 1_{r<s} \cK  f(r, s,y)\Big|_{l_2}^2 ~dr \right]^{1/2} \\
&\leq \left[\int_{0}^T 1_{r<t} |\cK f_1(r,t,x)|_{l_2}^2 ~dr \right]^{1/2}
+\left[\int_{0}^T 1_{r<s} |\cK f_1(r,s, y)|_{l_2}^2~dr \right]^{1/2} \\
&\quad +\left[\int_{0}^T  |1_{r<t} \cK f_2(r,t, x)  - 1_{r<s} \cK f_2(r,s,y)|_{l_2}^2 ~dr \right]^{1/2}.
\end{align*}
Taking mean average to the above inequality,
we get (\ref{0326 eqn 2}).
\end{proof}

Take constants $N_{\rho}$ and $C_0$ from Definition \ref{defn
metric} and Assumption \ref{as hor} respectively, and denote
$$
\gamma_0=\gamma_0(N_\rho,C_0) := (2C_0N_{\rho} +1)N_\rho.
$$
\begin{lemma}
                    \label{lem 2}
Suppose Assumption \ref{main as} holds, $f$ belongs to
$L_2(\cO_T;l_2) \cap L_\infty(\cO_T;l_2)$ and vanishes
 outside of $Q_{\gamma_0 c}(t_0,x_0)$.
Then
\begin{align}
                        \label{326 eqn 3}
\aint_{B_c(X_0)} \aint_{B_c(X_0)} Gf(t,s,x,y) ~dt dx ds dy
\leq N \|f\|_{L_\infty(\cO_T;l_2)},
\end{align}
where $N$ depends only on $d$, $\gamma_0$, and $N_0$.
\end{lemma}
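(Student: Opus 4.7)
The plan is to dominate $Gf$ by $\cG f$ via the triangle inequality and then exploit the $L_2$-boundedness of $\cG$ from Lemma~\ref{l2 bd} together with the support condition on $f$.

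\textbf{Step 1 (Minkowski).} For fixed $(t,x),(s,y) \in B_c(X_0)$, view
$h_1(r)=1_{r<t}\cK f(r,t,x)$ and $h_2(r)=1_{r<s}\cK f(r,s,y)$ as elements of $L_2((0,T);l_2)$. Then
\begin{align*}
Gf(t,s,x,y) &= \|h_1-h_2\|_{L_2((0,T);l_2)} \\
&\leq \|h_1\|_{L_2((0,T);l_2)}+\|h_2\|_{L_2((0,T);l_2)} = \cG f(t,x)+\cG f(s,y).
\end{align*}

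\textbf{Step 2 (reduction by symmetry).} Averaging and using symmetry of the two variables,
\begin{align*}
\aint_{B_c(X_0)}\aint_{B_c(X_0)} Gf(t,s,x,y)\, dtdx\, dsdy
\leq 2 \aint_{B_c(X_0)} \cG f(t,x)\, dtdx .
\end{align*}

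\textbf{Step 3 ($L_2$ bound + support).} By the Cauchy--Schwarz inequality,
\begin{align*}
\aint_{B_c(X_0)} \cG f(t,x)\, dtdx
\leq \Bigl(\aint_{B_c(X_0)} |\cG f(t,x)|^2\, dtdx\Bigr)^{1/2}
= \Bigl(\tfrac{1}{|B_c(X_0)|}\|\cG f\|_{L_2(B_c(X_0))}^2\Bigr)^{1/2}.
\end{align*}
By Lemma~\ref{l2 bd}, $\|\cG f\|_{L_2(\cO_T)}^2 \leq N_0^2 \|f\|_{L_2(\cO_T;l_2)}^2$. Since $f$ vanishes outside $B_{\gamma_0 c}(X_0)$,
\begin{align*}
\|f\|_{L_2(\cO_T;l_2)}^2 \leq |B_{\gamma_0 c}(X_0)|\,\|f\|_{L_\infty(\cO_T;l_2)}^2.
\end{align*}
Combining with the doubling property \eqref{double} applied with $\gamma=\gamma_0$, namely $|B_{\gamma_0 c}(X_0)| \leq N_{\gamma_0} |B_c(X_0)|$, yields
\begin{align*}
\aint_{B_c(X_0)} \cG f(t,x)\, dtdx \leq N_0\, N_{\gamma_0}^{1/2}\, \|f\|_{L_\infty(\cO_T;l_2)}.
\end{align*}
Multiplying by $2$ gives \eqref{326 eqn 3} with $N=2 N_0 N_{\gamma_0}^{1/2}$, which depends only on $d$, $\gamma_0$, and $N_0$ (the dependence on $d$ enters through the doubling constant $N_{\gamma_0}$).

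\textbf{Main obstacle.} There is no serious obstacle here: the argument is essentially the observation that $L^\infty$-functions supported in a ball of comparable size can be controlled by $L^2$-functions via H\"older, so the $L_2$-boundedness of $\cG$ transfers to the $L^1$-average on $B_c(X_0)$. The only subtle point is the passage from $Gf$ to $\cG f$ at the top; this works cleanly because the $l_2$-norm inside the square integral defining $Gf$ is a genuine norm, so Minkowski applies directly. Note that the stochastic H\"ormander condition (Assumption~\ref{as hor}) is not needed for this lemma---it enters only when estimating $Gf$ for $f$ supported \emph{away} from $X_0$, which is the companion to this local estimate.
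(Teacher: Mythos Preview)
Your proof is correct and follows essentially the same route as the paper: Minkowski to bound $Gf \leq \cG f(t,x)+\cG f(s,y)$, then H\"older/Cauchy--Schwarz together with the $L_2$-boundedness of $\cG$ from Lemma~\ref{l2 bd}, the support condition, and the doubling property~\eqref{double}. Your remark that Assumption~\ref{as hor} is not needed here is also in agreement with the paper.
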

\begin{proof}
Let $(t,x), (s,y) \in B_c(X_0)$ and assume $\cG f (t,x) + \cG f (s,y) <\infty$.
Then by Minkowski's inequality,
$$
Gf(t,s,x,y) \leq \cG f(t,x) + \cG f(s,y).
$$
Therefore the left side of (\ref{326 eqn 3}) is less than or equal to
$$
2\aint_{B_c(X_0)} \cG f(t,x) ~dt dx.
$$
Moreover by H\"older's inequality and Lemma \ref{l2 bd},
\begin{align*}
\aint_{B_c(X_0)} \cG f(t,x) ~dt dx
&\leq \frac{1}{|B_c(X_0)|^{1/2}}\left[\int_{B_c(X_0)} |\cG f(t,x)|^2 ~dt dx\right]^{1/2} \\
&\leq \frac{1}{|B_c(X_0)|^{1/2}}\left[\int_{\cO_T} |\cG f(t,x)|^2 ~dt dx\right]^{1/2} \\
&\leq \frac{N_0}{|B_c(X_0)|^{1/2}}\left[\int_{\cO_T} |f(t,x)|^2 ~dt dx\right]^{1/2} \\
&\leq N(d,\gamma_0,N_0) \|f\|_{L_\infty(\cO_T;l_2)},
\end{align*}
where the last inequality is due to the assumption that $f=0$ outside of $B_{\gamma_0 c}(X_0)$ and (\ref{double}).
Thus the lemma is proved.
\end{proof}

\begin{lemma}
                    \label{lem 3}
Let  $f \in L_2(\cO_T;l_2) \cap L_\infty(\cO_T;l_2)$ and $f=0 $ on $Q_{\gamma_0 c}(t_0,x_0)$.
Suppose that Assumption \ref{main as} and Assumption \ref{as hor} hold.
Then
\begin{align*}
\aint_{B_c(X_0)} \aint_{B_c(X_0)} Gf(t,s,x,y) ~dt dx ds dy \leq N \|f\|_{L_\infty(\cO_T;l_2)},
\end{align*}
where $N$ depends only on  $N_1$.
\end{lemma}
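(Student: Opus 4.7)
The strategy is to peel off $f$ from $Gf(t,s,x,y)$ using its $L_\infty(l_2)$-norm, and then invoke Assumption \ref{as hor} pointwise in $(t,s,x,y)\in B_c(X_0)^2$. The key geometric observation is that the constant $\gamma_0=(2C_0N_\rho+1)N_\rho$ has been engineered precisely so that the support condition on $f$ throws the relevant $z$-integration into the region in which the stochastic H\"ormander condition applies.

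First I would note that for $X=(t,x), Y=(s,y) \in B_c(X_0)$, the kernel difference produces
\[
|\cK f(r,t,x)-\cK f(r,s,y)|_{l_2} \leq \int_{\cO} |K(r,t,z,x)-K(r,s,z,y)|\, |f(r,z)|_{l_2}\, dz,
\]
and by hypothesis the integrand vanishes on $B_{\gamma_0 c}(X_0)$. The quasi-metric inequality gives $\rho(X,Y)\leq N_\rho(\rho(X,X_0)+\rho(X_0,Y))\leq 2N_\rho c$, and for $Z=(r,z)$ with $\rho(X_0,Z)\geq \gamma_0 c$,
\[
\rho(X,Z) \geq \frac{1}{N_\rho}\rho(X_0,Z)-\rho(X_0,X) \geq \frac{\gamma_0 c}{N_\rho}-c = 2C_0 N_\rho c \geq C_0 \rho(X,Y).
\]
Hence the $z$-integration can be restricted to $\{Z:\rho(X,Z)\geq C_0\rho(X,Y)\}$, which is precisely the region appearing in Assumption \ref{as hor}.

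Next I would factor $\|f\|_{L_\infty(\cO_T;l_2)}$ out of the $z$-integral, square, integrate in $r\in(0,T)$, and apply (\ref{hor eq}) to obtain the pointwise bound
\[
Gf(t,s,x,y) \leq \sqrt{N_1}\, \|f\|_{L_\infty(\cO_T;l_2)} \qquad \text{for all } (t,x),(s,y)\in B_c(X_0).
\]
Since the right-hand side is independent of $(t,s,x,y)$, the double average over $B_c(X_0)\times B_c(X_0)$ yields the claim with $N=\sqrt{N_1}$.

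The proof is essentially a bookkeeping exercise; the only subtlety is the verification of the geometric inclusion in the second step, which is where the specific choice of $\gamma_0$ is used. Once that is in place, Minkowski's inequality in $L_2(dr;l_2)$ and Assumption \ref{as hor} finish the argument without further work. I do not foresee any technical obstacle beyond ensuring that this quasi-metric computation is carried out cleanly.
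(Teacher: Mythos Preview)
Your proposal is correct and follows essentially the same route as the paper: verify the geometric inclusion $\rho(X,Z)\geq C_0\rho(X,Y)$ for $X,Y\in B_c(X_0)$ and $Z\notin B_{\gamma_0 c}(X_0)$ using the specific choice of $\gamma_0$, pull out $\|f\|_{L_\infty(\cO_T;l_2)}$, apply the stochastic H\"ormander bound (\ref{hor eq}) to get $Gf(t,s,x,y)\leq N_1^{1/2}\|f\|_{L_\infty(\cO_T;l_2)}$ pointwise, and then average. The computations match the paper's line by line.
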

\begin{proof}

If $X=(t,x)$, $Y=(s,y) \in B_c(X_0)$, and $Z=(r,z) \in \cO_T \setminus Q_{\gamma_0 c}(t_0,x_0)$, then
\begin{align}
                        \label{326 eqn 5}
\rho(Z,X) \geq \frac{\rho(Z,X_0)}{N_\rho} - \rho(X,X_0) \geq 2C_0N_{\rho}c
\geq C_0 \rho(X,Y).
\end{align}
Thus recalling the definition of $Gf$ and the assumptions on $f$, we have
\begin{align*}
&|Gf(t,s,x,y)|^2 \\
&\leq \int_{0}^T  \left[\int_{\cO}  |K(r,t,z,x)-K(r,s,z,y)||f(r, z)|_{l_2}dz\right]^2 ~dr \\
&\leq \|f\|^2_{L_\infty(\cO_T;l_2)}\int_{0}^T  \left[\int_{A(t,r,s,x,y)}  |K(r,t,z,x)-K(r,s,z,y)|dz\right]^2 ~dr,
\end{align*}
where $A(t,r,s,x,y)$ is the set of all $z\in \fR^d$ for which inequality (\ref{326 eqn 5}) holds.
Therefore by (\ref{hor eq}),
$$
|Gf(t,s,x,y)| \leq N_1^{1/2}\|f\|_{L_\infty(\cO_T;l_2)}
$$
and
\begin{align*}
\aint_{B_c(X_0)} \aint_{B_c(X_0)} Gf(t,s,x,y) ~dt dx ds dy \leq N_1^{1/2} \|f\|_{L_\infty(\cO_T;l_2)}.
\end{align*}
The lemma is proved.
\end{proof}

\begin{lemma}
                    \label{lem 4}
Let $f_1\in L_2(\cO_T;l_2)$, $f_2 \in L_2(\cO_T;l_2) \cap L_\infty(\cO_T;l_2)$, and
 suppose Assumptions \ref{main as} and  \ref{as hor} hold.
Then for each $(t_1,x_1) \in \cO_T$,
\begin{align}
                        \label{326 eqn 7}
[\cG (f_1+f_2)]^\sharp(t_1,x_1)\leq 2\cM (\cG f_1) (t_1,x_1) + N\|f_2\|_{L_\infty(\cO_T;l_2)},
\end{align}
where $N$ depends only on $d$, $\gamma$, $N_0, C_0$, and $N_1$.
\end{lemma}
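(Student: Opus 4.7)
The proof will be a straightforward assembly of Lemmas \ref{lem 1}, \ref{lem 2}, \ref{lem 3} after an appropriate geometric decomposition of $f_2$. Recall that the sharp function at $X_1=(t_1,x_1)$ is the supremum, over balls $B_c(X_0)$ containing $X_1$, of the (equivalent) double average
$$
\frac{1}{|B_c(X_0)|^{2}}\int_{B_c(X_0)}\int_{B_c(X_0)}|\cG(f_1+f_2)(t,x)-\cG(f_1+f_2)(s,y)|\,dtdxdsdy.
$$
Hence it suffices to bound this double average by the right-hand side of (\ref{326 eqn 7}) with a constant independent of the ball, and then take the supremum over $B_c(X_0)\ni X_1$.

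Fix such a ball $B_c(X_0)$ containing $X_1$. Since the $L_\infty$-bounds in Lemmas \ref{lem 2} and \ref{lem 3} are applied only to $f_2$ (while $f_1$ is merely in $L_2$), I split $f_2$, not $f_1$, according to the geometry of $B_{\gamma_0 c}(X_0)$:
$$
f_2 = f_2^{\mathrm{in}} + f_2^{\mathrm{out}}, \qquad f_2^{\mathrm{in}} := f_2 \,1_{B_{\gamma_0 c}(X_0)}, \quad f_2^{\mathrm{out}} := f_2 \,1_{\cO_T\setminus B_{\gamma_0 c}(X_0)},
$$
so that $f_2^{\mathrm{in}}$ vanishes outside $B_{\gamma_0 c}(X_0)$, $f_2^{\mathrm{out}}$ vanishes on $B_{\gamma_0 c}(X_0)$, and both satisfy $\|f_2^{\mathrm{in}}\|_{L_\infty(\cO_T;l_2)}\vee\|f_2^{\mathrm{out}}\|_{L_\infty(\cO_T;l_2)}\leq \|f_2\|_{L_\infty(\cO_T;l_2)}$.

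Now apply Lemma \ref{lem 1} with $f_1$ as given and with $f_2$ replaced by $f_2^{\mathrm{in}}+f_2^{\mathrm{out}}$. Since $Gf$ is an $L_2$-norm in the variable $r$, Minkowski's inequality yields
$$
G(f_2^{\mathrm{in}}+f_2^{\mathrm{out}})(t,s,x,y)\leq Gf_2^{\mathrm{in}}(t,s,x,y)+Gf_2^{\mathrm{out}}(t,s,x,y),
$$
so the conclusion of Lemma \ref{lem 1} gives
\begin{align*}
&\aint_{B_c(X_0)}\!\aint_{B_c(X_0)}|\cG(f_1+f_2)(t,x)-\cG(f_1+f_2)(s,y)|\,dtdxdsdy\\
&\quad\leq 2\cM(\cG f_1)(t_1,x_1)+\aint\!\aint Gf_2^{\mathrm{in}}+\aint\!\aint Gf_2^{\mathrm{out}}.
\end{align*}
Lemma \ref{lem 2} applied to $f_2^{\mathrm{in}}$ (which is supported in $B_{\gamma_0 c}(X_0)$) bounds the first double average by $N\|f_2\|_{L_\infty(\cO_T;l_2)}$, and Lemma \ref{lem 3} applied to $f_2^{\mathrm{out}}$ (which vanishes on $B_{\gamma_0 c}(X_0)$) bounds the second by $N\|f_2\|_{L_\infty(\cO_T;l_2)}$. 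Taking the supremum over all $B_c(X_0)\ni(t_1,x_1)$ yields (\ref{326 eqn 7}).

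There is no real obstacle; the only subtlety is that one must split $f_2$ rather than $f_1$, and one must verify that the triangle/Minkowski inequality for $G$ allows Lemmas \ref{lem 2} and \ref{lem 3} to be applied separately to the near and far pieces. The constants from Lemma \ref{lem 2} depend on $d$, $\gamma_0$ (hence on $N_\rho$, $C_0$) and $N_0$, and from Lemma \ref{lem 3} on $N_1$, which combine into the claimed dependence.
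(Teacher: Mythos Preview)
Your proof is correct and follows essentially the same route as the paper: apply Lemma~\ref{lem 1}, split $f_2$ into its near part on $B_{\gamma_0 c}(X_0)$ and its far part, use Minkowski for $G$, then invoke Lemmas~\ref{lem 2} and~\ref{lem 3} respectively. The only cosmetic difference is that the paper applies Lemma~\ref{lem 1} first and splits $Gf_2$ afterward, whereas you split $f_2$ before invoking the lemma; the two are equivalent.
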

\begin{proof}
Let $(t_1,x_1) \in B_c(X_0)$. Then by Lemma \ref{lem 1},
\begin{align*}
& \aint_{B_c(X_0)} \aint_{B_c(X_0)} |\cG (f_1+f_2)(t,x)-\cG (f_1+f_2)(s,y)|~dtdxdsdy  \\
&\leq 2\cM (\cG f_1) (t_1,x_1) +\aint_{B_c(X_0)} \aint_{B_c(X_0)} Gf_2(t,s,x,y) ~dt dx ds dy.
\end{align*}
Moreover,  defining $f_{2,1}(t,x):= f_2(t,x) 1_{Q_{\gamma_0 c}(t_0,x_0)}(t,x)$ and  $f_{2,2}(t,x) := f_2(t,x) - f_{2,1}(t,x)$,
we have
\begin{align*}
&\aint_{B_c(X_0)} \aint_{B_c(X_0)} Gf_2 ~dt dx ds dy \\
&\leq \aint_{B_c(X_0)} \aint_{B_c(X_0)} Gf_{2,1} ~dt dx ds dy+\aint_{B_c(X_0)} \aint_{B_c(X_0)} Gf_{2,2} ~dt dx ds dy.
\end{align*}
Therefore we obtain (\ref{326 eqn 7}) by applying Lemma \ref{lem 2} and Lemma \ref{lem 3}.
The lemma is proved.
\end{proof}

\vspace{3mm}

{\bf Proof of Theorem \ref{main thm 2}}
\vspace{2mm}

Since the case $p=2$ is already proved in Lemma \ref{l2 bd}, we
assume $p>2$. Let $f \in L_2(\cO_T;l_2) \cap L_\infty(\cO_T;l_2)$.
For $\lambda >0$, we put
$$
f_{1,\lambda}(t,x) = f(t,x) 1_{|f| > \delta \lambda}(t,x) \quad \text{and} \quad f_{2,\lambda}(t,x) = f(t,x) 1_{|f| \leq \delta \lambda}(t,x),
$$
where $\delta$ is a positive constant  which will be specified
later. Obviously, $$f=f_{1,\lambda} + f_{2,\lambda}. $$
 Assume
$$
\lambda \leq [\cG (f)]^\sharp (t,x).
$$
Then by Lemma \ref{lem 4},
\begin{align*}
\lambda \leq [\cG (f)]^\sharp (t,x)
&\leq 2\cM (\cG f_{1,\lambda}) (t,x) + N\|f_{2,\lambda}\|_{L_\infty(\cO_T;l_2)} \\
&\leq 2\cM (\cG f_{1,\lambda}) (t,x) + N \delta \lambda,
\end{align*}
where $N$ is independent of $\lambda$ and $\delta$. Take $\delta >0$ so that $N \delta <1/2$. Then the above inequality implies that
$$
\lambda \leq  4\cM (\cG f_{1,\lambda}) (t,x).
$$
Thus
\begin{align}
                    \label{pf main 1}
|\{(t,x) \in \cO_T : \lambda \leq [\cG (f)]^\sharp (t,x)\}|
\leq |\{(t,x) \in \cO_T : \lambda \leq  4\cM (\cG f_{1,\lambda}) (t,x)\}|.
\end{align}
By (\ref{as hlfs}),
$$
\| \cG f \|_{L_p(\cO_T)} \leq N_p \| [\cG f]^\sharp \|_{L_p(\cO_T)}.
$$
Observe that
\begin{align*}
\| [\cG f]^\sharp \|^p_{L_p(\cO_T)}
=p\int_0^\infty \lambda^{p-1} |\{ (t,x) \in \cO_T : \lambda \leq [\cG (f)]^\sharp(t,x)\}|~d\lambda.
\end{align*}
Therefore by (\ref{pf main 1}), Chebyshev's inequality,  (\ref{as hlfs}), and Lemma \ref{l2 bd},
\begin{align*}
\| [\cG f]^\sharp \|^p_{L_p(\cO_T)}
&\leq p\int_0^\infty \lambda^{p-1}|\{(t,x) \in \cO_T: \lambda \leq  4\cM (\cG f_{1,\lambda}) (t,x)\}| d\lambda \\
&\leq N\int_0^\infty \lambda^{p-3}\int_{\cO_T} |\cM (\cG f_{1,\lambda})(t,x)|^2dtdx d\lambda \\
&\leq N\int_0^\infty \lambda^{p-3}\int_{\cO_T} |\cG f_{1,\lambda} (t,x)|^2 dtdx d\lambda \\
&\leq N\int_0^\infty \lambda^{p-3}\int_{\cO_T} |f_{1,\lambda}(t,x)|^2 dtdx d\lambda \\
&\leq N\int_0^\infty \lambda^{p-3}\int_{\cO_T \cap \{|f|>\delta \lambda\}} |f(t,x)|^2 dtdx d\lambda \\
&=N\int_{\cO_T} \left( \int^{|f|/\delta}_0 \lambda^{p-3}d\lambda \right)\, |f(t,x)|^{2} dtdx\\
&\leq N \|f\|_{L_p(\cO_T;l_2)}.
\end{align*}
The last inequality is due to $p>2$. The theorem is proved. \qed

\mysection{Proof of Theorem \ref{main thm}}
                                        \label{pf of main thm}

\vspace{3mm}

Let $g \in \bL(O_T)$. Then for each $\omega$, $g \in L_2(\cO_T ;l_2) \cap L_\infty(\cO_T ;l_2)$.
Therefore by Fubini's Theorem, Burkholder-Davis-Gundy's inequality, and Theorem \ref{main thm 2},
\begin{align*}
&\bE \int_0^T \|\bT g(t,\cdot)\|_{L_p}^p dt \\
&= \int_0^T \int_{\fR^d}\bE \left|\int_0^{t} \int_{\fR^d} K(r,t, z,x) g^k(r,z)dz dW_r^k\right|^p dx dt \\
&\leq N(p) \int_0^T \int_{\fR^d}\bE \left(\int_0^{t} \left|\int_{\fR^d} K(r,t, z,x) g(r,z)dz\right|^2_{l_2} dr\right)^{p/2} dx dt \\
&\leq N \bE\int_0^T \int_{\fR^d} |\cG g|^p dx dt \leq N \bE\int_0^T \int_{\fR^d} |g|^p dx dt.
\end{align*}
The theorem is proved. \qed

\mysection{ Application to SPDE: Maximal $L_p$-regularity}

We study the maximal $L_p$-regularity of   SPDEs of the type
\begin{align}
                        \label{main eqn}
du(t,x) = A(t)u(t,x)dt +\sum_{k=1}^{\infty} g^k(t,x)dW^k_t, \quad (t,x) \in (0,\infty)\times \fR^d ; \quad u(0)=0,
\end{align}
where  $W^k_t$ are  independent one-dimensional Wiener process defined on  $\Omega$.

\subsection{Time measureable pseudo-differential operator}
Assume that $A(t)$ is a pseudo differential operator with the symbol $\psi(t,\xi)$, that is,
$$
A(t)f(x) := \cF^{-1} \left[ \psi(t,\xi)  \cF(f) (\xi) \right](x).
$$
We set
$$
d_0:=\left\lfloor \frac{d}{2} \right\rfloor+1,
$$
and assume  there exists a constant $\nu>0$ such that
\begin{align}
                    \label{el con}
      \Re [-\psi(t,\xi)]  \geq \nu |\xi|^\gamma,
\end{align}
and
\begin{align}
							\notag
&\int_{R\leq |\xi|<2R} 
 \prod_{i=1}^{d_0} \left|D^{\alpha_i}_{\xi} \psi(t,\xi)\right|^{k_i}  d\xi \\
		                    \label{pseudo con}
&\quad \leq \nu^{-1} R^{\left(d+k_1(\gamma -|\alpha_1|) +k_2(\gamma -|\alpha_2|) + \cdots + k_{d_0}(\gamma -|\alpha_{d_0}|)\right)}
 \end{align}
for any $R>0$, multi-indexes  $\alpha_i\in (\bZ_+)^d$ and $k_i\in \bZ_+$  ($i=1,2,\cdots, d_0$) such that
$$
\sum_{i=1}^{d_0}|\alpha_i|+\sum_{i=1}^{d_0}k_i    \leq d_0 + \sum_{i=1}^{d_0}1_{k_i>0}.
$$
\begin{remark}
Here is a sufficient condition for  (\ref{pseudo con}):  $\exists \, c>0$ such that  
\begin{align}
                    \label{sym con}
|D^\alpha_{\xi} \psi(t,\xi)| \leq c |\xi|^{\gamma -|\alpha|}, \quad 
\quad \forall \xi \in \fR^d \setminus \{0\}, \quad \forall \,
  |\alpha| \leq  d_0.
 \end{align}
 Indeed, if this holds then, for $R\leq |\xi| < 2R$,
 $$
\prod_{i=1}^{d_0} |D^{\alpha_i}_{\xi}\psi(t,\xi)|^{k_i} \leq  N(c,d) R^{k_1(\gamma -|\alpha_1|) +k_2(\gamma -|\alpha_2|) + \cdots + k_{d_0}(\gamma -|\alpha_{d_0})|}
$$
Thus by integrating on $\{\xi \in \fR^d: R\leq |\xi| < 2R\}$ we certainly get (\ref{pseudo con}).
\end{remark}

Define
$$
p(s,t,x):=1_{0<s<t } \cF^{-1} \left[ \exp \left(\int_s^t \psi(r,\xi)dr \right) \right](x).
$$
and
$$
(-\Delta)^{\gamma/4}p(s,t,x)
:=1_{0<s<t } \cF^{-1} \left[ |\xi|^{\gamma/2}\exp \left(\int_s^t \psi(r,\xi)dr \right) \right](x).
$$
Then 
for any $g \in \bL_2(l_2)$,  the (weak) solution to \eqref{main eqn} is given by
\begin{equation}
 \label{sol}
u(t,x)=\int_0^{t} \int_{\fR^d} p(s,t, x-y) g^k(s,y)dy dW_s^k.
\end{equation}
See e.g.  \cite[Theorem 4.2]{Krylov1999} for details.  Actually  in \cite{Krylov1999} the representation formula of the weak solution is  derived only for $\psi(t,\xi)=-|\xi|^2$, but one can easily check the argument there works for the general case.

 Due to (\ref{el con}) we may say $A(t)$ is a linear operator of order $\gamma$.  Applying the Ito's formula to $|u(t,x)|^2$, taking the expectation, and then integrating over $\fR^d$, we get for any $t>0$,
 $$
 \bE \|u(t)\|^2_{L_2(\fR^d)} -2\Re\left[ \bE \int^t_0 \int_{\fR^d} u \overline{Au} dxds \right]=\bE \int^t_0 \|g\|^2_{L_2(\fR^d;l_2)} ds.
 $$
By Plancherel's theorem and (\ref{el con}),
  \begin{align*}
  &- \Re\left[\int_{\fR^d} u \overline{Au} dxds \right]
  = -\Re \left[\int_{\fR^d} \overline{\psi(s,\xi)}|\cF(u)|^2 d\xi \right]\\
  &=   \int_{\fR^d}  \Re [-\psi(s,\xi)]  |\cF(u)|^2 d\xi  \geq \nu \int_{\fR^d} |\xi|^{\gamma}|\cF(u)|^2 d\xi
  = \nu \|(-\Delta)^{\gamma/4}u\|^2_{L_2(\fR^d)}.
  \end{align*}
  It  follows that
  $$
  \bE \|u(t)\|^2_{L_2(\fR^d)} + 2\nu \bE \int^t_0\|(-\Delta)^{\gamma/4}u\|^2_{L_2(\fR^d)}ds \leq \bE \int^t_0 \|g\|^2_{L_2(\fR^d;l_2)} ds,
  $$
  and above calculations suggest that $(-\Delta)^{\gamma/4}u$ is the maximal regularity of solutions if there is no smoothness condition on $g$.

The following theorem extends  the above $L_2$-estimate to $L_p$-estimate.

\begin{thm}
                    \label{pseudo thm}
Let $p \geq 2$ and assume (\ref{el con}) and (\ref{pseudo con}) hold.  Then for any $g\in \bL(l_2)$ and $u$ defined as in (\ref{sol}), we have
\begin{align}
                            \label{apl main}
\bE \int_0^\infty \|(-\Delta)^{\gamma/4}u(t,\cdot)\|^p_{\bL_p} dt
\leq N(d,p,\gamma,\nu)\bE \int_0^\infty \|g(t,\cdot)\|^p_{L_p(l_2)}dt.
\end{align}
\end{thm}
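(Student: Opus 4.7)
The plan is to apply Theorem \ref{main thm} to the translation-invariant kernel
$$
K(r,t,z,x) \;:=\; 1_{r<t}\,(-\Delta)^{\gamma/4}_x p(r,t,x-z),
$$
on $\cO_T = (0,\infty)\times\fR^d$, so that $\bT g(t,x) = (-\Delta)^{\gamma/4}u(t,x)$ by \eqref{sol}. Then \eqref{apl main} is immediate from Theorem \ref{main thm} once Assumptions \ref{main as} and \ref{as hor} are verified with an appropriate quasi-metric admitting \textbf{FS}. The natural choice, dictated by the parabolic scaling of order $\gamma$, is
$$
\rho((t,x),(s,y)) := |t-s|^{1/\gamma} + |x-y|,
$$
which is a genuine metric on $\cO_T$, obviously satisfies the doubling condition \eqref{double}, and admits \textbf{FS} since $|\cO_T|=\infty$.

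To verify Assumption \ref{main as} I would use It\^o's isometry, Plancherel's identity, and the ellipticity bound \eqref{el con}. The Fourier symbol of the kernel is $|\xi|^{\gamma/2}\exp\!\left(\int_r^t \psi(u,\xi)\,du\right)$, whose squared modulus is bounded by $|\xi|^{\gamma}e^{-2\nu(t-r)|\xi|^\gamma}$. Integrating over $t\in(r,\infty)$ yields $\int_r^\infty |\xi|^\gamma e^{-2\nu(t-r)|\xi|^\gamma}\,dt = (2\nu)^{-1}$, and then Plancherel in $\xi$ and summation in $k$ give
$$
\bE\int_0^\infty \|\bT g(t,\cdot)\|_{L_2}^2\,dt \;\leq\; \tfrac{1}{2\nu}\,\bE\int_0^\infty\|g(r,\cdot)\|^2_{L_2(l_2)}\,dr;
$$
the same computation applied to the tail $(t-\varepsilon,t)$ shows $\bT_\varepsilon g\to \bT g$ in $\bL_2$.

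The main work is the stochastic H\"ormander condition \eqref{hor eq}. Setting $X=(t,x),Y=(s,y)$, $\delta=\rho(X,Y)$, and using translation invariance of $K$, I would split
$$
K(r,t,z,x)-K(r,s,z,y) = [K(r,t,x-z)-K(r,t,y-z)] + [K(r,t,y-z)-K(r,s,y-z)],
$$
and reduce \eqref{hor eq} to two uniform bounds: a spatial estimate
$$
\sup_{|v|\leq \delta}\int_0^t\Bigl[\int_{|z|\geq C\delta}|K(r,t,z+v)-K(r,t,z)|\,dz\Bigr]^2\,dr\;\lesssim\;1,
$$
and a time-increment estimate (for $s\leq t$ with $|t-s|\leq \delta^\gamma$)
$$
\int_0^s\!\Bigl[\int_{|z|\geq C\delta}\!|K(r,t,z)-K(r,s,z)|\,dz\Bigr]^2 dr \;+\;\int_s^t\!\Bigl[\int_{|z|\geq C\delta}\!|K(r,t,z)|\,dz\Bigr]^2 dr \;\lesssim\; 1.
$$
Each bound follows from a Cauchy--Schwarz estimate of $\int_{|z|\geq R}|K|\,dz$ by $\|(1+|z|/R)^{d_0}K\|_{L_2}$ (with $d_0=\lfloor d/2\rfloor+1$), Plancherel, and the identity
$(ix)^\alpha K(r,t,x) = \cF^{-1}[D^\alpha_\xi(|\xi|^{\gamma/2}\exp\int_r^t \psi(u,\xi)du)](x)$. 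Expanding $D^\alpha_\xi$ by the Leibniz and Fa\`a di Bruno formulas produces exactly the monomial combinations $\prod_{i} |D^{\alpha_i}_\xi\psi|^{k_i}$ appearing in \eqref{pseudo con}, with the exponent constraint matching $\sum|\alpha_i|+\sum k_i \leq d_0+\sum 1_{k_i>0}$; the dyadic integral bound \eqref{pseudo con} combined with the exponential factor $e^{-\nu(t-r)|\xi|^\gamma}$ from \eqref{el con} lets one sum over dyadic annuli $\{R\leq|\xi|<2R\}$ with explicit scaling in $(t-r)^{1/\gamma}$, and the outer time integral in $r$ converges to a constant depending only on $d,\gamma,\nu$.

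The main obstacle I expect is the time-increment estimate: differentiating in $t$ brings down a factor $\psi(t,\xi)$ of order $|\xi|^\gamma$ (which raises the effective order of the kernel by $\gamma$), and on the remainder interval $r\in(s,t)$ the two kernels do not cancel, so one must exploit $t-s\leq \delta^\gamma$ to absorb the loss via the exponential decay. Once both estimates are in place, adding them yields \eqref{hor eq} with $N_1=N_1(d,\gamma,\nu)$, Theorem \ref{main thm} applies with $p\geq 2$, and \eqref{apl main} follows. \qed
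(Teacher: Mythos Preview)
Your overall strategy---the parabolic quasi-metric $\rho((t,x),(s,y))=|t-s|^{1/\gamma}+|x-y|$, the $L_2$ verification via It\^o isometry and Plancherel, and the spatial/temporal splitting of the increment $K(r,t,z,x)-K(r,s,z,y)$---is exactly the route the paper takes. The gap is in the sentence ``Cauchy--Schwarz estimate of $\int_{|z|\geq R}|K|\,dz$ by $\|(1+|z|/R)^{d_0}K\|_{L_2}$\,'': this requires $\bigl\||x|^{d_0}(-\Delta)^{\gamma/4}p(r,t,\cdot)\bigr\|_{L_2}<\infty$, which via Plancherel means $D^{d_0}_\xi\bigl(|\xi|^{\gamma/2}\exp\!\int_r^t\psi\bigr)\in L_2(\fR^d)$. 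But one Leibniz term is $(D^{d_0}_\xi|\xi|^{\gamma/2})\cdot\exp(\ldots)\sim|\xi|^{\gamma/2-d_0}$ near $\xi=0$, and this is \emph{not} locally square-integrable once $\gamma$ is small (precisely, once $d+\gamma\leq 2d_0$, i.e.\ $\gamma\leq 2$ for even $d$ and $\gamma\leq 1$ for odd $d$). The paper makes this limitation explicit in the remark following Lemma~\ref{ker int fin}.

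The paper's fix is to replace the integer weight $|x|^{d_0}$ by the fractional weight $|x|^{d/2+\delta}$ with $\delta\in(0,\tfrac{\gamma}{2}\wedge\tfrac12)$: this is Lemma~\ref{ker int fin}, and it is the main technical work of the section. On the Fourier side one takes $d_0-1$ integer derivatives and then a fractional $(-\Delta)^{\varepsilon/2}$ with $\varepsilon=\tfrac{d}{2}+\delta-(d_0-1)\in(0,1)$, handled through its singular-integral representation with a careful near/far splitting of the convolution in $\eta$ (the $\cI_1,\cI_{2,1},\cI_{2,2},\cI_{2,3}$ decomposition). With Lemma~\ref{ker int fin} in hand, the three scale-invariant bounds of Lemma~\ref{main cor} (tail, spatial increment, time increment) follow, and the final corollary assembles them into \eqref{hor eq} via essentially your decomposition. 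So your outline is correct down to this one step, but that step is where the bulk of the analysis lives, and the integer-weight shortcut you propose does not cover the full range $\gamma>0$.
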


\begin{remark}
A  proof of  (\ref{apl main})  is given  in  \cite{kim2016parabolic}  with a stronger condition than (\ref{sym con}),   that is
 \begin{align*}
 |D^\alpha_{\xi} \psi(t,\xi)| \leq \nu^{-1} |\xi|^{\gamma
 -|\alpha|}, \quad \forall\, |\alpha|\leq d_0+1.
 \end{align*}
 The proof of
\cite{kim2016parabolic}  
highly depends on the integration by parts,
which requires the stronger assumption on  $\psi(t,\xi)$.
 \end{remark}

\begin{example}
Let $m \in \bN$ and  $A(t)=(-1)^{m-1}\sum_{|\alpha|=|\beta|=m} a^{\alpha \beta}(t) D^{\alpha+\beta}$  be a $2m$-order differential operator.
Assume that  $a^{\alpha \beta}(t)$ are bounded complex-valued measurable functions
and satisfy an ellipticity condition, i.e.,
$$
\nu |\xi|^{2m} \leq  \sum_{|\alpha|=|\beta|=m}  \xi^\alpha \xi^\beta \Re\left[a^{\alpha \beta}(t)\right]
\quad \forall \xi \in \fR^{d}.
$$
Then  $A(t)$ is the pseudo-differential operator whose symbol is given by
$\psi(t,\xi)=(-1)^m \sum_{|\alpha|=|\beta|=m} a^{\alpha \beta}(t)\xi^{\alpha}\xi^{\beta}$.
Obviously $\psi(t,\xi)$ satisfies (\ref{el con}) and (\ref{sym con}) with $\gamma=2m$.
\end{example}

 \begin{example}
The class of  pseudo-differential operators we are considering in this article 
covers a certain class of non-local operators. Let $\gamma\in
(0,2)$ and denote
\begin{align*}
 A(t) u
=\int_{\fR^d \setminus \{0\}} \Big(u(t,x+y)-u(t,x)-\chi(y)(\nabla u
(t,x),y) \Big) \frac{m(t,y) }{|y|^{d+\gamma}}dy
\end{align*}
where $\chi(y)= I_{\gamma >1} + I_{|y|\leq1}I_{\gamma=1}$ and
$m(t,y)\geq 0$ is a  measurable function satisfying  the following
conditions (i)-(iv):

(i) If $\gamma=1$ then
\begin{align}           \label{cancel}
\int_{\partial B_1} m(t,w) w ~S_1(dw)=0, \quad \forall t >0,
\end{align}
where  $\partial B_{1}$ is the unit sphere in $\fR^d$ and $S_{1}(dw)$ is the surface measure on it.

(ii) The function $m=m(t,y) $ is zero-order homogeneous and differentiable in $y$ up to $d_0 = \lfloor\frac{d}{2}\rfloor+1$.

(iii) There is a constant $K$ such that for each  $t \in \fR$
\begin{eqnarray*}
         \label{mik}
\sup_{|\alpha| \leq d_0, |y|=1} |D^\alpha_y m^{(\alpha)} (t,y) | \leq K.
\end{eqnarray*}

(iv) There exists a
constant $c>0$ so that $m(t,y)>c$ on a set  $E\subset \partial B_1$
of positive  $S_1(dw)$-measure.

Using  (i)-(iv) one can check that   $A(t)$ is a pseudo differential operator with the symbol $\psi(t,\xi)$ satisfying (\ref{el con}) and (\ref{sym con}), where
\begin{align*}
\psi(t,\xi) = - c_1 \int_{\partial B_{1}} |(w,\xi)|^\gamma
[1-i\varphi^{(\gamma)}(w,\xi)] m(t,w)~S_{1}(dw),
\end{align*}
\begin{align*}
\varphi^{(\gamma)}(w,\xi)=c_2\frac{(w,\xi)}{|(w,\xi)|} I_{\gamma \neq 1}- \frac{2}{\pi} \frac{(w,\xi)}{|(w,\xi)|} \ln |(w,\xi)| I_{\gamma=1},
\end{align*}
and $c_1(\gamma,d)$, $c_2(\gamma,d)$ are certain positive constants (see  \cite{Mikulevicius2012} for the detail).
 \end{example}

To apply Theorem \ref{main thm} we set $T=\infty$, $\cO =\fR^d$, and
$$
\rho(X,Y)= |t-s|^{1/\gamma} + |x-y|,
$$
where $X=(t,x)$ and $Y=(s,y)$.
Since $\rho$ is a quasi-metric with the doubling ball condition and $|(0,\infty)\times \fR^d|=\infty$, $\rho$ admits  the Fefferman-Stein theorem. Define
\begin{align*}
\bT_{\psi,\varepsilon}g
:=\int_0^{t-\varepsilon} \int_{\fR^d} (-\Delta)^{\gamma/4}p(r,t, x-y) g^k(r,y)dy dW_r^k
\end{align*}
and
\begin{align*}
\bT_{\psi}g
:= \lim_{\varepsilon \downarrow 0}\int_0^{t-\varepsilon} \int_{\fR^d} (-\Delta)^{\gamma/4}p(r,t, x-y) g^k(r,y)dy dW_r^k,
\end{align*}
where the limit is in the sense of $\bL_2$-norm.

In the next lemma, we first show that $\bT_{\psi,\varepsilon}g$ converges with respect to the norm in $\bL_2$ and
 $\bT_{\psi}$ is a bounded operator from $\bL_2(l_2)$ to $\bL_2$.

\begin{lemma}
                    \label{a l2 lem}
For each $g \in \bL_2(l_2)$,
$\bT_{\varepsilon}  g(t,x)$ converges  in $\bL_2$ as $\varepsilon \downarrow 0$.
Moreover the operator $g \mapsto \bT g$ is bounded from $\bL_2(l_2)$ to $\bL_2$, i.e.,
there exists a constant $N_0$ such that for all $g \in \bL_2(l_2)$,
\begin{align}
                        \label{l2 est}
\bE \int_0^\infty \|\bT_{\psi} g(t,\cdot)\|_{L_2}^2 dt \leq N_0\bE \int_0^\infty \|g(t,\cdot)\|_{L_2(l_2)}^2 dt.
\end{align}
\end{lemma}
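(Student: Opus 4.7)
\textbf{Proof plan for Lemma \ref{a l2 lem}.} The plan is to pass through Fourier space: combine It\^o's isometry with Plancherel's theorem to reduce everything to a one-dimensional calculation that is controlled by the ellipticity condition \eqref{el con}. First, for fixed $\omega$ and $\varepsilon>0$, I would apply It\^o's isometry and Fubini's theorem to write
\begin{align*}
\bE \int_0^\infty \|\bT_{\psi,\varepsilon} g(t,\cdot)\|_{L_2}^2\,dt
= \bE \int_0^\infty \int_{\fR^d} \int_0^{t-\varepsilon} \sum_k \Bigl|\bigl((-\Delta)^{\gamma/4}p(r,t,\cdot)*g^k(r,\cdot)\bigr)(x)\Bigr|^2 dr\,dx\,dt,
\end{align*}
which uses only that $g\in \bL(l_2)\subset \bL_2(l_2)$ and that $(-\Delta)^{\gamma/4}p(r,t,\cdot)$ is in $L_1\cap L_2(\fR^d)$ for $r<t$ (from the Gaussian-type bound below).

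Next I would apply Plancherel's theorem in the spatial variable to the inner integral. Since the operation is a convolution in $x$, its Fourier transform equals the product of the symbols, giving
\begin{align*}
\int_{\fR^d}\Bigl|\bigl((-\Delta)^{\gamma/4}p(r,t,\cdot)*g^k(r,\cdot)\bigr)(x)\Bigr|^2 dx
= (2\pi)^{-d} \int_{\fR^d} |\xi|^{\gamma} \Bigl|\exp\Bigl(\int_r^t \psi(\tau,\xi)\,d\tau\Bigr)\Bigr|^2 |\cF g^k(r,\xi)|^2 d\xi.
\end{align*}
Using the ellipticity condition \eqref{el con}, the exponential modulus is bounded by $\exp(-2\nu(t-r)|\xi|^\gamma)$. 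Then I would swap the order of $t$ and $r$ integration, compute the $t$-integral explicitly,
\begin{align*}
\int_{r+\varepsilon}^\infty |\xi|^\gamma e^{-2\nu(t-r)|\xi|^\gamma}\,dt = \frac{e^{-2\nu\varepsilon|\xi|^\gamma}}{2\nu} \leq \frac{1}{2\nu},
\end{align*}
and then apply Plancherel in reverse to recover $\sum_k\|g^k(r,\cdot)\|_{L_2}^2 = \|g(r,\cdot)\|_{L_2(l_2)}^2$. This yields $\|\bT_{\psi,\varepsilon}g\|_{\bL_2}^2\le (2\nu)^{-1}\|g\|_{\bL_2(l_2)}^2$ uniformly in $\varepsilon>0$, proving \eqref{l2 est} for the approximants.

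For the $\bL_2$-convergence of $\bT_{\psi,\varepsilon}g$ as $\varepsilon\downarrow 0$, I would repeat the same Fourier calculation for $\bT_{\psi,\varepsilon_1}g-\bT_{\psi,\varepsilon_2}g$ with $0<\varepsilon_1<\varepsilon_2$; the only change is that the $t$-integral runs over $(r+\varepsilon_1, r+\varepsilon_2)$, producing the factor $(2\nu)^{-1}\bigl(e^{-2\nu\varepsilon_1|\xi|^\gamma}-e^{-2\nu\varepsilon_2|\xi|^\gamma}\bigr)$, which is bounded by $(2\nu)^{-1}$ and tends to $0$ pointwise in $\xi\neq 0$ as $\varepsilon_1,\varepsilon_2\downarrow 0$. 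Dominated convergence against the integrable majorant $(2\nu)^{-1}|\cF g^k(r,\xi)|^2$ then gives that $\{\bT_{\psi,\varepsilon}g\}$ is Cauchy in $\bL_2$, so the limit $\bT_\psi g$ exists and inherits the bound \eqref{l2 est}.

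The only mild obstacle I anticipate is justifying the Fubini swap and the applicability of Plancherel for all $\omega$ simultaneously, but this is routine given $g\in \bL(l_2)$: for each $\omega$, $g(\omega,\cdot,\cdot)$ is smooth and compactly supported in $x$ with bounded-in-$t$ support, so $\cF g^k(r,\cdot)$ is a Schwartz function and all integrals converge absolutely against the Gaussian factor $e^{-2\nu(t-r)|\xi|^\gamma}$. No use of \eqref{pseudo con} is required at the $L_2$ level; the full strength of the symbol condition will only be needed later for extending to $p>2$ via Theorem \ref{main thm}.
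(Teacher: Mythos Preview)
Your proposal is correct and follows essentially the same route as the paper: It\^o's isometry combined with Fubini and Plancherel to pass to Fourier space, then the ellipticity bound \eqref{el con} to control the exponential, an explicit $t$-integration producing the factor $(2\nu)^{-1}\bigl(e^{-2\nu\varepsilon_1|\xi|^\gamma}-e^{-2\nu\varepsilon_2|\xi|^\gamma}\bigr)$, and dominated convergence to conclude. The only cosmetic difference is that the paper first proves the Cauchy property and then remarks that \eqref{l2 est} follows by the same computation, whereas you present the uniform bound first; also, the Fubini/Plancherel steps work directly for any $g\in\bL_2(l_2)$ without first restricting to $\bL(l_2)$.
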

\begin{proof}
Let $\varepsilon_1> \varepsilon_2>0$. Then by Fubini's theorem, It\^o's isometry, and Plancherel's theorem,
\begin{align*}
&\bE \int_0^\infty \|\bT_{\psi,\varepsilon_1} g(t,\cdot)-\bT_{\psi,\varepsilon_1} g(t,\cdot)\|_{L_2}^2 dt  \\
&=\int_{\fR^d}\int_0^\infty \bE  \left|\int_{t-\varepsilon_1}^{t-\varepsilon_2} \int_{\fR^d} (-\Delta)^{\gamma/4}p(r,t, x-y) g^k(r,y)dy dW_r^k \right|^2 dt dx\\
&=\int_0^\infty \bE  \int_{t-\varepsilon_1}^{t-\varepsilon_2} \int_{\fR^d}\left|\int_{\fR^d} (-\Delta)^{\gamma/4}p(r,t, x-y) g^k(r,y)dy\right|_{l_2}^2  dxdr  dt\\
&=N(d)\int_0^\infty \bE  \int_{t-\varepsilon_1}^{t-\varepsilon_2}
\int_{\fR^d}  |\xi|^{\gamma}\exp \left(2\int_r^t  \Re [\psi(\rho,\xi)]d\rho \right) |\cF(g)(r,\cdot)|^2_{l_2}d\xi dr  dt\\
&\leq N(d)\bE  \int_0^\infty \int_{\fR^d} \int_{\varepsilon_2}^{\varepsilon_1}
  |\xi|^{\gamma}\exp \left(-2t\nu |\xi|^\gamma  \right) dt |\cF(g)(r,\cdot)|^2_{l_2} d\xi dr  \\
&\leq N(d)\bE  \int_0^\infty \int_{\fR^d} \left|\exp \left(-2\nu \varepsilon_1 |\xi|^\gamma  \right) -\exp \left(-2t\nu \varepsilon_2 |\xi|^\gamma  \right) \right| |\cF(g)(r,\cdot)|^2_{l_2} d\xi dr.
\end{align*}
The last term goes to zero as $\varepsilon_1,\varepsilon_2 \to 0 $ by the Lebesgue dominated convergence theorem.
Therefore $\bT_\psi g$ is well-defined and using Fubini's theorem, It\^o's isometry, and Plancherel's theorem again, we get
(\ref{l2 est}). The lemma is proved.
\end{proof}
Due to Lemma \ref{a l2 lem}, to prove (\ref{apl main}) it suffices to show that Assumption \ref{as hor} holds with
$$
K(r,t,z,x)=1_{0<r<t}(-\Delta)^{\gamma/4}p(r,t,x-z).
$$
For $0<s<t$ and $x \in \fR^d$, denote
\begin{align*}
q_1(s,t,x)= \cF^{-1} \left[ \exp \left(\int_s^t \psi(r,(t-s)^{-1/\gamma}\xi)dr \right) \right](x),
\end{align*}
and
\begin{align*}
&q_2(s,t,x) \\
&=(t-s) \cF^{-1} \left[ \psi(t,(t-s)^{-1/\gamma}\xi) |\xi|^{\gamma/2} \exp \left(\int_s^t \psi(r,(t-s)^{-1/\gamma}\xi)dr \right) \right](x).
\end{align*}
By the change of variables,
$$
(t-s)^{d/\gamma}p(s,t,(t-s)^{1/\gamma}x)=q_1(s,t,x),
$$
\begin{align}
                    \label{rela p q1}
(t-s)^{d/\gamma} (t-s)^{1/2}(-\Delta)^{\gamma/4}p(s,t,(t-s)^{1/\gamma}x)=(-\Delta)^{\gamma/4}q_1(s,t,x),
\end{align}
and
\begin{align}
                    \label{rela p q2}
\frac{\partial}{\partial t} (-\Delta)^{\gamma/4} p(s,t,x) =(t-s)^{-d/\gamma}(t-s)^{-1}q_2(s,t,(t-s)^{-1/\gamma}x).
\end{align}

\begin{lemma}
						\label{ker bd lem}
There exists a constant $N=N(d,\nu,\gamma)$ so that for any multi-index $\alpha$ with $|\alpha|\leq d_0$, $0<s<t$, and $i=1,\ldots,d$,
\begin{align*}
&\int_{\fR^d} \left|D^\alpha_{\xi}\left( |\xi|^{\gamma/2}\cF(q_1(t,s,\cdot)(\xi) \right)  \right| d\xi
+\int_{\fR^d} \left|D^\alpha_{\xi}\left( \xi^{i}|\xi|^{\gamma/2}\cF(q_1(t,s,\cdot)(\xi) \right)  \right| d\xi \\
&\quad +\int_{\fR^d} \left|D^\alpha_{\xi}\left( \cF(q_2(t,s,\cdot)(\xi) \right)  \right| d\xi \leq N.
\end{align*}
\end{lemma}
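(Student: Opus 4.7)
The plan is to bound all three integrals by a single scheme: expand the derivative via Leibniz and Faà di Bruno, decompose $\fR^d$ into dyadic annuli in $\xi$, and estimate each piece by combining the exponential decay from the ellipticity assumption (\ref{el con}) with the integral estimate (\ref{pseudo con}). Write $\Phi(\xi) := \int_s^t \psi(r,(t-s)^{-1/\gamma}\xi)\,dr$, so that $\cF(q_1)(\xi) = \exp\Phi(\xi)$, $|\exp\Phi(\xi)| \leq e^{-\nu|\xi|^\gamma}$, and by the chain rule
\[
D^\beta_\xi\Phi(\xi) = (t-s)^{-|\beta|/\gamma}\int_s^t \psi^{(\beta)}(r,(t-s)^{-1/\gamma}\xi)\,dr,
\]
where $\psi^{(\beta)} := D^\beta_\xi\psi$.

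For the first integral, Leibniz applied to $|\xi|^{\gamma/2}\exp\Phi$ together with Faà di Bruno applied to $D^{\alpha-\beta}\exp\Phi$ produces a finite sum of terms of the form $\exp\Phi(\xi)\,|\xi|^{\gamma/2-|\beta|}\prod_{j=1}^m D^{\gamma_j}\Phi(\xi)$, indexed by tuples $(\beta;\gamma_1,\ldots,\gamma_m)$ with $|\gamma_j|\geq 1$ and $\beta+\sum_j\gamma_j=\alpha$. The case $m=0$ (no partition) reduces to $|\xi|^{\gamma/2-|\alpha|}e^{-\nu|\xi|^\gamma}$, which is integrable whenever $|\alpha|\leq d_0$ and $\gamma>0$. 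For $m\geq 1$, I decompose $\fR^d=\bigsqcup_{k\in\bZ} A_k$ with $A_k=\{2^k\leq|\xi|<2^{k+1}\}$, change variable $\eta=(t-s)^{-1/\gamma}\xi$, bound $\prod_j|\int_s^t\psi^{(\gamma_j)}(r,\eta)\,dr|\leq \int_{[s,t]^m}\prod_j|\psi^{(\gamma_j)}(r_j,\eta)|\,d\vec r$, and apply generalized Hölder in $\eta$ with exponent $m$:
\[
\int_{A'_k}\prod_j|\psi^{(\gamma_j)}(r_j,\eta)|\,d\eta \leq \prod_j\Bigl(\int_{A'_k}|\psi^{(\gamma_j)}(r_j,\eta)|^m\,d\eta\Bigr)^{1/m}.
\]
Each single-factor integral is then controlled by (\ref{pseudo con}) with one active index ($k_i=m$, $\alpha_i=\gamma_j$), which is admissible because $|\gamma_j|+m\leq (|\alpha|-|\beta|)+1\leq d_0+1$. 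The $(t-s)$-powers from the change of variable, from the scaling of each $D^{\gamma_j}\Phi$, and from the $m$ time integrations cancel exactly, leaving a sum of the shape
\[
\sum_{k\in\bZ} C\cdot 2^{k(\gamma/2+m\gamma+d-|\alpha|)}\min\{1,e^{-c\,2^{k\gamma}}\},
\]
which converges because $\gamma/2+m\gamma+d-|\alpha|\geq 3\gamma/2+d-d_0>0$ (using $d_0\leq d/2+1$).

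The second integral is handled identically; the extra factor $\xi^i$ only shifts a power by one and the analysis remains in the admissible range. For the third integral I write $\cF(q_2)=(t-s)\psi(t,(t-s)^{-1/\gamma}\xi)\cdot|\xi|^{\gamma/2}\cdot\exp\Phi$ and treat $(t-s)\psi(t,(t-s)^{-1/\gamma}\xi)$ as one additional ``derivative-of-$\Phi$''-type factor in the Faà di Bruno expansion: its $\beta$-derivative is $(t-s)^{1-|\beta|/\gamma}\psi^{(\beta)}(t,(t-s)^{-1/\gamma}\xi)$, which scales like a $D^\beta\Phi$ contribution up to a harmless $(t-s)$ prefactor that cancels against the $(t-s)$ already in front. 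The same dyadic/Hölder scheme then applies with $m+1$ factors in place of $m$. The main obstacle I anticipate is the bookkeeping in the borderline case $|\alpha|=d_0$: one must verify that for every partition produced by Faà di Bruno the admissibility constraint $\sum_i|\alpha_i|+\sum_i k_i\leq d_0+\sum_i\mathbf{1}_{k_i>0}$ of (\ref{pseudo con}) holds---in the tightest sub-case of $\cF(q_2)$ a more delicate argument (for instance, peeling off a single $\xi$-derivative to reduce to $|\alpha'|=d_0-1$, or mixing the two-factor version of (\ref{pseudo con}) at a common time with an interpolation) may be required---and that the $(t-s)$-powers from the various sources cancel exactly so that the final constant depends only on $d$, $\nu$, and $\gamma$.
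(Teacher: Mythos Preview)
Your approach is essentially the paper's: decompose $\fR^d$ into dyadic annuli, extract the exponential decay $e^{-\nu|\xi|^\gamma}$ from (\ref{el con}), and control the polynomial growth of the derivative factors via (\ref{pseudo con}). The paper's own proof is a two-line sketch that passes directly to a bound of the form $\sum_{n\in\bZ}\sum_k 2^{n(d+3\gamma/2-k)}e^{-\nu 2^{n}}$ without writing out the Leibniz/Fa\`a di Bruno expansion, the change of variable $\eta=(t-s)^{-1/\gamma}\xi$, or the H\"older step you use to separate factors at different times; it treats only the first integral and declares the other two ``similar.'' Your proposal is thus a faithful and considerably more detailed fleshing-out of the same argument, and the borderline admissibility issue you flag for $\cF(q_2)$ when $|\alpha|=d_0$ is a genuine bookkeeping point that the paper's sketch simply does not address.
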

\begin{proof}
Because of the similarity, we only show 
$$
\int_{\fR^d} \left|D^\alpha_{\xi}\left( |\xi|^{\gamma/2} \cF(q_1(t,s,\cdot)(\xi) \right)  \right| d\xi \leq N.
$$
This is an easy conesequence of (\ref{el con}) and (\ref{pseudo con}).
Indeed,
\begin{align*}
&\int_{\fR^d} \left|D^\alpha_{\xi}\left( |\xi|^{\gamma/2}\cF(q_1t,s,\cdot)(\xi) \right)  \right| d\xi \\
&\leq \sum_{n \in \bZ}  \int_{2^n \leq |\xi| <2^{n+1}} \left|D^\alpha_{\xi}\left( |\xi|^{\gamma/2}\cF(q_1t,s,\cdot)(\xi) \right)  \right| d\xi \\
&= \sum_{n \in \bZ}  \int_{2^n \leq |\xi| <2^{n+1}} \left|D^\alpha_{\xi}\left( |\xi|^{\gamma/2}\exp \left(\int_s^t \psi(r,(t-s)^{-1/\gamma}\xi)dr \right) \right) \right| d\xi \\
&\leq N\sum_{n \in \bZ} \sum_{k=1}^{|\alpha|}  2^{n(d+\frac{3\gamma}{2}-k)} e^{-\nu 2^n}
\leq N(d,\nu,\gamma).
\end{align*}
The lemma is proved.
\end{proof}
Note that for any $f \in L_1(\fR^d)$,
\begin{align*}
\sup_{x \in \fR^d} \left|\cF^{-1}(f)(x)\right|  \leq N(d)\|f\|_{L_1(\fR^d)}.
\end{align*}
Thus by Lemma \ref{ker bd lem},  there exists a constant $N=N(d,\nu,\gamma)$
so that for any $t>s$ and $x \in \fR^d$
\begin{align}
                    \label{ker bds}
\left|(-\Delta)^{\gamma/4} q_1(s,t, x)\right|
+\left|\frac{\partial}{\partial x^i}(-\Delta)^{\gamma/4} q_1(s,t, x)\right|
+| q_2(s,t, x)| \leq N.
\end{align}
\begin{lemma}
						\label{ker bd lem 2}
Let $\varepsilon \in \left[0,\frac{d+3\gamma-2(d_0-1)}{2} \right)$. 
Then, there exists a constant $N=N(d,\nu,\gamma, \varepsilon)$ so that for any multi-index $\alpha$ with $|\alpha|\leq d_0-1$, $0<s<t$, and $i=1,\ldots,d$,
\begin{align*}
&\int_{\fR^d} \left||\xi|^{-\varepsilon}D^\alpha_{\xi}\left( |\xi|^{\gamma/2 }\cF(q_1(t,s,\cdot)(\xi) \right)  \right|^2 d\xi
+\int_{\fR^d} \left||\xi|^{-\varepsilon}D^\alpha_{\xi}\left( \cF(q_2(t,s,\cdot)(\xi) \right)  \right|^2 d\xi\\
&\quad+\int_{\fR^d} \left||\xi|^{-\varepsilon}D^\alpha_{\xi}\left( \xi^{i}|\xi|^{\gamma/2}\cF(q_1(t,s,\cdot)(\xi) \right)  \right|^2 d\xi \leq N.
\end{align*}
\end{lemma}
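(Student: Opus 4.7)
The plan is to adapt the dyadic strategy of Lemma~\ref{ker bd lem} to the $L^2$ setting with the singular weight $|\xi|^{-\varepsilon}$; I detail the argument only for the first summand, since the summand involving $\cF(q_2)$ and the one involving $\xi^i|\xi|^{\gamma/2}\cF(q_1)$ will follow by the same steps up to cosmetic modifications.

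First, decompose $\fR^d$ into dyadic annuli $A_n := \{2^n \leq |\xi| < 2^{n+1}\}$, $n \in \bZ$, and control $I_n := \int_{A_n}|\xi|^{-2\varepsilon}|D^\alpha_\xi[|\xi|^{\gamma/2}\cF(q_1(t,s,\cdot))(\xi)]|^2\,d\xi$ separately on each annulus. Inside the derivative I expand via Leibniz to isolate $D^{\alpha-\beta}(|\xi|^{\gamma/2})$, pointwise bounded by $C|\xi|^{\gamma/2-|\alpha-\beta|}$, from $D^\beta\cF(q_1)(\xi) = D^\beta e^{\Psi(\xi)}$ with $\Psi(\xi) := \int_s^t\psi(r,(t-s)^{-1/\gamma}\xi)\,dr$. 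Fa\`a di Bruno then expresses the latter as a sum over set partitions $\pi$ of $\beta$ of products $\big(\prod_{B\in\pi}D^{\sigma(B)}\Psi\big)e^\Psi$; the factor $e^\Psi$ is controlled by $|e^\Psi| \leq e^{-\nu|\xi|^\gamma}$ via (\ref{el con}), supplying the super-exponential decay that kills the tail as $n\to+\infty$.

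Second, to bound $|D^{\sigma(B)}\Psi(\xi)|^2$ on each annulus I use Cauchy--Schwarz in the temporal variable:
$$
|D^{\sigma(B)}\Psi(\xi)|^2 \leq (t-s)^{1-2|\sigma(B)|/\gamma}\int_s^t |D^{\sigma(B)}\psi(r,(t-s)^{-1/\gamma}\xi)|^2\,dr.
$$
After the change of variable $\eta = (t-s)^{-1/\gamma}\xi$ and Fubini, I apply a generalised H\"older inequality in $\eta$ to distribute the block-product across single-factor spatial integrals of the form $\int |D^{\sigma(B)}\psi(r,\eta)|^k\,d\eta$, each of which is controlled by (\ref{pseudo con}). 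The hypothesis $|\alpha|\leq d_0-1$ is exactly what keeps the requisite instances of (\ref{pseudo con}) inside its admissible range $|\sigma(B)|+k\leq d_0+1$, and the various $(t-s)$-powers produced by Cauchy--Schwarz, the Jacobian $(t-s)^{d/\gamma}$, and the weight $(t-s)^{-2\varepsilon/\gamma}$ will cancel out of the final bound.

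Collecting estimates yields, for the most singular (single-block $|\pi|=1$) Fa\`a di Bruno term, an annulus bound of the form $I_n \leq C\, 2^{n(d+3\gamma-2|\alpha|-2\varepsilon)}e^{-c\nu 2^{n\gamma}}$; summing over $n\in\bZ$ converges since the Gaussian tail handles $n\to+\infty$, and the polynomial exponent at $n\to-\infty$ stays positive under the hypothesis $\varepsilon<(d+3\gamma-2(d_0-1))/2$ applied with $|\alpha|\leq d_0-1$. The main technical obstacle I foresee is handling the Fa\`a di Bruno terms whose block count $|\pi|$ is close to $|\beta|$: a naive equal-exponent H\"older then forces (\ref{pseudo con}) with $k > d_0+1-|\sigma(B)|$, outside its admissible range, and one must pair the temporal prefactor $(t-s)^{|\pi|}$ with the localisation $|\eta| \leq C(t-s)^{-1/\gamma}$ implicit in the exponential decay in order to reduce matters to the tractable $|\pi|=1$ case.
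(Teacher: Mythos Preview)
Your overall plan---dyadic decomposition, Leibniz, Fa\`a di Bruno, then (\ref{el con}) and (\ref{pseudo con})---is exactly the route the paper takes, but the paper is far terser: it simply records the annulus bound
\[
\int_{2^n\le|\xi|<2^{n+1}}\Bigl||\xi|^{-\varepsilon}D^\alpha_\xi\bigl(|\xi|^{\gamma/2}e^{\Psi(\xi)}\bigr)\Bigr|^2\,d\xi
\;\le\;N\sum_{k=1}^{|\alpha|}2^{n(d+3\gamma-2\varepsilon-2k)}e^{-\nu 2^{n}}
\]
as a direct consequence of (\ref{el con}) and (\ref{pseudo con}), and sums over $n\in\bZ$.

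Where your write-up diverges is the detour through Cauchy--Schwarz in the time variable, the substitution $\eta=(t-s)^{-1/\gamma}\xi$, and a generalised H\"older in $\eta$ to separate the block-times $r_B$. This machinery is what manufactures the ``obstacle'' you then try to repair. The simpler observation, implicit in the paper and already used in Lemma~\ref{ker bd lem}, is that the rescaled symbol $\Psi(\xi)=\int_s^t\psi(r,(t-s)^{-1/\gamma}\xi)\,dr$ \emph{itself} satisfies (\ref{el con}) and (\ref{pseudo con}) with constants independent of $s,t$: the scaling is chosen precisely so that all $(t-s)$-powers cancel. Working directly with $\Psi$ removes the multiple time variables and with them the need to force large H\"older exponents on individual $\psi$-factors.

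Your diagnosis of which Fa\`a di Bruno terms are dangerous is also inverted. A partition with $|\pi|$ blocks contributes, after squaring, an extra factor of order $|\xi|^{2(|\pi|-1)\gamma}$ relative to the single-block term. This makes the integrand \emph{less} singular at $\xi\to0$ (where the weight $|\xi|^{-2\varepsilon}$ bites) and more rapidly growing at $\xi\to\infty$ (where $e^{-2\nu|\xi|^\gamma}$ absorbs any polynomial). Thus the constraint $\varepsilon<(d+3\gamma-2(d_0-1))/2$ is dictated by the $|\pi|=1$ term, which needs only (\ref{pseudo con}) with a single factor at exponent $k=2$---well inside the admissible range since $|\beta|+2\le(d_0-1)+2=d_0+1$. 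Your proposed fix (trading $(t-s)^{|\pi|}$ against an $\eta$-localisation from the exponential) does not address a genuine difficulty and, as stated, would not close the argument: the localisation $|\eta|\lesssim(t-s)^{-1/\gamma}$ gives nothing on the annuli with $n<0$, which is exactly where the $\varepsilon$-condition matters.
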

\begin{proof}
Because of the similarity, we only show 
$$
\int_{\fR^d} \left||\xi|^{-\varepsilon}D^\alpha_{\xi}\left( |\xi|^{\gamma/2} \cF(q_1(t,s,\cdot)(\xi) \right)  \right|^2 d\xi \leq N.
$$
Since $d+3\gamma -2\varepsilon-2(d_0-1) >0$,
\begin{align*}
\sum_{n -\infty}^1 2^{n(d+3\gamma-2\varepsilon-2(d_0-1))} < \infty.
\end{align*}
Therefore by (\ref{el con}) and (\ref{pseudo con}),
\begin{align*}
&\int_{\fR^d} \left||\xi|^{-\varepsilon} D^\alpha_{\xi}\left( |\xi|^{\gamma/2}\cF(q_1t,s,\cdot)(\xi) \right)  \right|^2 d\xi \\
&\leq \sum_{n \in \bZ}  \int_{2^n \leq |\xi| <2^{n+1}} \left||\xi|^{-\varepsilon} D^\alpha_{\xi}\left( |\xi|^{\gamma/2}\cF(q_1t,s,\cdot)(\xi) \right)  \right|^2 d\xi \\
&= \sum_{n \in \bZ}  \int_{2^n \leq |\xi| <2^{n+1}} \left||\xi|^{-\varepsilon} D^\alpha_{\xi}\left( |\xi|^{\gamma/2}\exp \left(\int_s^t \psi(r,(t-s)^{-1/\gamma}\xi)dr \right) \right) \right|^2 d\xi \\
&\leq N\sum_{n \in \bZ} \sum_{k=1}^{|\alpha|}  2^{n(d+3\gamma-2\varepsilon-2k)} e^{-\nu 2^n}
\leq N(d,\nu,\gamma , \varepsilon).
\end{align*}
The lemma is proved.
\end{proof}

\begin{lemma}
						\label{ker bd lem 3}
There exists a constant $N=N(d,\nu,\gamma)$ so that for all $c>0$, multi-index $|\alpha|\leq d_0$, $0<s<t$, and $i=1,\ldots,d$,
\begin{align*}
&\int_{|\xi| \geq c} \left|D^\alpha_{\xi}\left( |\xi|^{\gamma/2}\cF(q_1(t,s,\cdot)(\xi) \right)  \right|^2 d\xi
+\int_{|\xi| \geq c} \left|D^\alpha_{\xi}\left( \xi^{i}|\xi|^{\gamma/2}\cF(q_1(t,s,\cdot)(\xi) \right)  \right|^2 d\xi \\
&\quad +\int_{|\xi| \geq c} \left|D^\alpha_{\xi}\left( \cF(q_2(t,s,\cdot)(\xi) \right)  \right|^2 d\xi 
\leq N\left( 1+ 1_{c<1}c^{d+3\gamma-2d_0}\right).
\end{align*}
\end{lemma}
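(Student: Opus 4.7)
The plan is to repeat the dyadic decomposition argument used in Lemmas~\ref{ker bd lem} and \ref{ker bd lem 2}, but now restricting the sum to frequencies $|\xi|\geq c$ and carefully tracking the $c$-dependence of the low-frequency tail. Since the three integrands differ only by an algebraic prefactor of degree $\leq \gamma$ (an extra $\xi^i$ in the second, or the factor $\psi(t,(t-s)^{-1/\gamma}\xi)|\xi|^{-\gamma/2}$ converting $q_1$ to $q_2$ in view of \eqref{rela p q1}--\eqref{rela p q2}), it suffices to treat
\begin{align*}
I(c) := \int_{|\xi|\geq c}\left|D^\alpha_\xi\!\left(|\xi|^{\gamma/2}\cF(q_1(s,t,\cdot))(\xi)\right)\right|^2 d\xi
\end{align*}
in detail; the other two are handled identically modulo carrying an extra power of $|\xi|$ in the dyadic bound, which only changes the multiplicative constant and not the structural form $2^{n(d+3\gamma-2k)}$.

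Decompose $\{|\xi|\geq c\}=\bigcup_{n\geq n_0}A_n$, where $A_n=\{2^n\leq|\xi|<2^{n+1}\}\cap\{|\xi|\geq c\}$ and $n_0\in\bZ$ is the smallest integer with $2^{n_0+1}>c$. On each $A_n$ I expand $D^\alpha_\xi$ by Leibniz together with the Fa\`a di Bruno formula applied to $\exp(h(\xi))$ with $h(\xi)=\int_s^t\psi(r,(t-s)^{-1/\gamma}\xi)\,dr$. The resulting sum involves pointwise factors $|\xi|^{\gamma/2-j}$ (from derivatives of $|\xi|^{\gamma/2}$) together with products $\prod_i D^{\alpha_i}_\xi\psi(r,(t-s)^{-1/\gamma}\xi)$ multiplied by $(t-s)^{-|\alpha_i|/\gamma}$; these are exactly the configurations allowed by the multi-index constraint of \eqref{pseudo con} as long as $|\alpha|\leq d_0$. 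Using \eqref{el con} to control the exponential pointwise by $|e^{h(\xi)}|\leq e^{-\nu|\xi|^\gamma}$, and applying \eqref{pseudo con} to integrate the polynomial factors over $A_n$, I obtain a bound of the same shape as in the $\varepsilon=0$ version of Lemma~\ref{ker bd lem 2}:
\begin{align*}
\int_{A_n}\left|D^\alpha_\xi\!\left(|\xi|^{\gamma/2}\cF(q_1)\right)\right|^2 d\xi \leq N\sum_{k=1}^{|\alpha|}2^{n(d+3\gamma-2k)}e^{-\nu 2^n}.
\end{align*}

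It then remains to sum in $n$ and split at $n=0$. The tail $n\geq 0$ is dominated by the super-polynomial factor $e^{-\nu 2^n}$ and contributes a constant $N(d,\nu,\gamma)$ independent of $c$. When $c\geq 1$ only this tail appears and we are done. When $c<1$, the remaining indices $n_0\leq n<0$ satisfy $2^n\leq 1$ so $e^{-\nu 2^n}\leq 1$, and the low-frequency contribution reduces to the geometric sum $\sum_{k=1}^{|\alpha|}\sum_{n_0\leq n<0}2^{n(d+3\gamma-2k)}$. For each $k$, if $d+3\gamma-2k>0$ the sum is $O(1)$ and is absorbed into the constant ``$1$''; if $d+3\gamma-2k\leq 0$ the sum is controlled by its largest term at $n=n_0\sim\log_2 c$, which is comparable to $c^{d+3\gamma-2k}$ and, since $k\leq|\alpha|\leq d_0$ and $c<1$, is majorized by $c^{d+3\gamma-2d_0}$. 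This yields exactly the stated bound $N(1+1_{c<1}c^{d+3\gamma-2d_0})$.

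The only nontrivial technical check, already implicit in the statement through the worst-case exponent $k=d_0$, is that every multi-index tuple arising in the Fa\`a di Bruno expansion of $D^\alpha_\xi e^{h(\xi)}$ satisfies the combinatorial constraint $\sum_i|\alpha_i|+\sum_i k_i\leq d_0+\sum_i 1_{k_i>0}$ so that \eqref{pseudo con} can be invoked; this is a routine count once one notes that $|\alpha|\leq d_0$. Beyond this, the argument is purely a dyadic reprise of the computation already carried out for Lemma~\ref{ker bd lem 2}, adapted to the frequency cutoff at $|\xi|=c$, and the analogous bounds for $\xi^i|\xi|^{\gamma/2}\cF(q_1)$ and for $\cF(q_2)$ follow by the same estimate after absorbing the extra algebraic factors into the dyadic weight.
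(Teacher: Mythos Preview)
Your proof is correct and takes essentially the same approach as the paper: dyadic decomposition in frequency restricted to shells with $2^{n}\gtrsim c$, the Leibniz/Fa\`a di Bruno expansion combined with \eqref{el con}--\eqref{pseudo con} to obtain the shell bound $N\sum_{k=1}^{|\alpha|}2^{n(d+3\gamma-2k)}e^{-\nu 2^{n}}$, and then summation in $n$. Your write-up is in fact more explicit than the paper's about the split at $n=0$ and about why the worst exponent $k=d_{0}$ governs the low-frequency tail, but the argument is identical.
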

\begin{proof}
As in the proofs of the previous lemmas, we only show 
$$
\int_{|\xi| \geq c} \left|D^\alpha_{\xi}\left( |\xi|^{\gamma/2} \cF(q_1(t,s,\cdot)(\xi) \right)  \right|^2 d\xi \leq N.
$$
By (\ref{el con}) and (\ref{pseudo con}),
\begin{align*}
&\int_{|\xi| \geq c} \left|D^\alpha_{\xi}\left( |\xi|^{\gamma/2}\cF(q_1t,s,\cdot)(\xi) \right)  \right|^2 d\xi \\
&\leq \sum_{n \in \bZ}  \int_{2^n \leq |\xi| <2^{n+1}} 1_{|\xi| \geq c}\left|D^\alpha_{\xi}\left( |\xi|^{\gamma/2}\cF(q_1t,s,\cdot)(\xi) \right)  \right|^2 d\xi \\
&= \sum_{2^{n} \geq c/2}  \int_{2^n \leq |\xi| <2^{n+1}} 1_{|\xi| \geq c} \left|D^\alpha_{\xi}\left( |\xi|^{\gamma/2}\exp \left(\int_s^t \psi(r,(t-s)^{-1/\gamma}\xi)dr \right) \right) \right|^2 d\xi \\
&\leq N\sum_{2^{n} \geq c/2} \sum_{k=1}^{|\alpha|}  2^{n(d+3\gamma-2k)} e^{-\nu 2^n}
\leq N(d,\nu,\gamma) \left( 1+ 1_{c<1}c^{d+3\gamma-2d_0}\right).
\end{align*}
The lemma is proved.
\end{proof}

\begin{lemma}   \label{ker int fin}
Let $0< \delta < \left(\frac{\gamma}{2} \wedge \frac{1}{2} \right) $.
Then there exists a constant $N=N(d,\nu,\gamma,\delta)$ so that for any $0<s<t$
\begin{align}           \label{lemma 531}
 \int_{\fR^d} \left| |x|^{\frac{d}{2} +\delta }(-\Delta)^{\gamma/4} q_1(s,t, x) \right|^2~dx \leq N,
\end{align}
\begin{align}               \label{lemma 532}
 \int_{\fR^d} \left| |x|^{\frac{d}{2}+\delta} \frac{\partial}{\partial x^i}(-\Delta)^{\gamma/4} q_1(s,t, x) \right|^2~dx \leq N,
\end{align}
and
\begin{align}               \label{lemma 533}
\int_{\fR^d} \left| |x|^{\frac{d}{2}+\delta}q_2(s,t, x) \right|^2~dx \leq N.
\end{align}
\end{lemma}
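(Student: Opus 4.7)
My plan is to prove (\ref{lemma 531}) in detail; the estimates (\ref{lemma 532}) and (\ref{lemma 533}) follow by the same scheme applied to $\xi^i|\xi|^{\gamma/2}\cF(q_1)$ and $\cF(q_2)$ in place of $|\xi|^{\gamma/2}\cF(q_1)$, since Lemmas \ref{ker bd lem}--\ref{ker bd lem 3} give parallel Fourier-side bounds for all three quantities. Set $g(x):=(-\Delta)^{\gamma/4}q_1(s,t,x)$ and $F(\xi):=|\xi|^{\gamma/2}\cF(q_1(s,t,\cdot))(\xi)=\hat g(\xi)$. Since $|x|\leq\sqrt d\,\max_i|x^i|$, the target reduces to bounding $\int_{\fR^d}|x^i|^{d+2\delta}|g(x)|^2\,dx$ for each $i\in\{1,\dots,d\}$. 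Put $\beta:=d+2\delta-2(d_0-1)$; with $d_0=\lfloor d/2\rfloor+1$ and $\delta<1/2$, one checks that $\beta=2\delta\in(0,1)$ when $d$ is even and $\beta=1+2\delta\in(1,2)$ when $d$ is odd, so $\beta\in(0,2)$ in both cases. Factor $|x^i|^{d+2\delta}=(x^i)^{2(d_0-1)}|x^i|^\beta$ and let $h(x):=(x^i)^{d_0-1}g(x)$, so that $\hat h(\xi)=i^{d_0-1}\partial_{\xi^i}^{d_0-1}F(\xi)$. Since $\beta/2\in(0,1)$, the 1D fractional Parseval identity in direction $e_i$ gives
\[
\int_{\fR^d}|x^i|^{d+2\delta}|g(x)|^2\,dx = c_\beta\int_{\fR}\int_{\fR^d}\frac{|\hat h(\xi+te_i)-\hat h(\xi)|^2}{|t|^{1+\beta}}\,d\xi\,dt.
\]

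I would then bound the right-hand side by splitting into three regions. For $|t|\geq 1$, the triangle inequality together with $\|\hat h\|_{L^2}^2\leq N$ (Lemma \ref{ker bd lem 2} with $\varepsilon=0$) produces a contribution controlled by $N\int_{|t|\geq 1}|t|^{-1-\beta}\,dt<\infty$. For $|t|<1$ and $|\xi|>2|t|$, Cauchy-Schwarz yields
\[
|\hat h(\xi+te_i)-\hat h(\xi)|^2\leq|t|\int_0^{|t|}|\partial_{\xi^i}\hat h(\xi+se_i)|^2\,ds,
\]
where $\partial_{\xi^i}\hat h$ is (up to sign) a $d_0$-th derivative of $F$; translating $\eta=\xi+se_i$ moves the $\xi$-integration onto $\{|\eta|>|t|\}$, on which Lemma \ref{ker bd lem 3} applies and gives the bound $N(1+|t|^{d+3\gamma-2d_0})$. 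The remaining $t$-integral $\int_{|t|<1}|t|^{1-\beta}(1+|t|^{d+3\gamma-2d_0})\,dt$ is finite because $\beta<2$ and the assumption $\delta<\gamma/2$ forces $1-\beta+d+3\gamma-2d_0>-1$. For $|t|<1$ and $|\xi|\leq 2|t|$, I would choose $\varepsilon$ in the interval $(\beta/2,\,(d+3\gamma-2(d_0-1))/2)$, which is non-empty precisely because $\delta<\gamma/2$. Lemma \ref{ker bd lem 2} then yields
\[
\int_{|\xi|\leq 2|t|}|\hat h(\xi)|^2\,d\xi\leq (2|t|)^{2\varepsilon}\int_{\fR^d}|\xi|^{-2\varepsilon}|\hat h(\xi)|^2\,d\xi\leq N|t|^{2\varepsilon},
\]
and the same bound holds for $\hat h(\cdot+te_i)$ in view of $\{|\xi|\leq 2|t|\}\subset\{|\xi+te_i|\leq 3|t|\}$; the $t$-integral $\int_{|t|<1}|t|^{2\varepsilon-1-\beta}\,dt$ then converges because $\varepsilon>\beta/2$.

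The main technical obstacle will be coordinating the choice of $\varepsilon$ so that all three regional bounds are simultaneously finite; this is exactly where both halves of the hypothesis $\delta<\gamma/2\wedge 1/2$ enter quantitatively: $\delta<1/2$ is used to keep $\beta<2$ so the first-order fractional Parseval identity applies, while $\delta<\gamma/2$ guarantees both the non-empty interval for $\varepsilon$ in region (c) and the integrability in region (b) through the exponent in Lemma \ref{ker bd lem 3}. Once $\varepsilon$ is fixed, the remaining estimates are routine and uniform in $0<s<t$. Running the same argument with $\xi^i|\xi|^{\gamma/2}\cF(q_1)$ and with $\cF(q_2)$ in place of $|\xi|^{\gamma/2}\cF(q_1)$ yields (\ref{lemma 532}) and (\ref{lemma 533}) with the same constant $N=N(d,\nu,\gamma,\delta)$.
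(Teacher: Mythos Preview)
Your argument is correct and follows essentially the same strategy as the paper: factor out $d_0-1$ integer derivatives in the $\xi^i$-direction, express the remaining fractional weight via the singular-integral representation of a fractional Laplacian, and split into far, intermediate, and near-origin regions using Lemmas \ref{ker bd lem 2} and \ref{ker bd lem 3}. The only notable variation is that you use the one-dimensional fractional Laplacian in direction $e_i$ (keeping the weight $|x^i|^{\beta}$), whereas the paper bounds $|x^j|^{2\varepsilon}\leq |x|^{2\varepsilon}$ and works with the full $d$-dimensional $(-\Delta_\xi)^{\varepsilon/2}$; this leads to a slightly cleaner treatment of the near-origin region on your side but is otherwise the same proof.
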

\begin{proof}
We only prove (\ref{lemma 531}). The  proofs of (\ref{lemma 532}) and (\ref{lemma 533}) are similar.

Note that it suffices to show that for each $j=1,\ldots,d$,
\begin{align}
								\label{808 1}
 \int_{\fR^d} \left| (ix^j)^{\frac{d}{2} +\delta }(-\Delta)^{\gamma/4} q_1(s,t, x) \right|^2~dx \leq N,
\end{align}
where $i$ is the imaginary number, i.e. $i^2=-1$.
Set $\varepsilon= \varepsilon(\delta) = \frac{d}{2} +\delta - (d_0-1)$
and 
$$
\hat q(s,t,\xi) = \cF\left((-\Delta)^{\gamma/4} q_1(s,t, x) \right)(\xi).
$$
By the property of the Fourier inverse transform,
$$
(ix^j)^{d_0-1}\cF^{-1}\left(f(\xi)\right)(x)
=(-1)^{d_0-1}\cF^{-1}\left(D^{d_0-1}_{\xi^j}f(\xi)\right)(x).
$$
The left hand side of (\ref{808 1}) is equal to
\begin{align}
					\notag
& \int_{\fR^d} \left| (ix^j)^{\frac{d}{2} +\delta -(d_0-1)} \cF^{-1}\left( D_{\xi^j}^{d_0-1} \hat q(s,t,\xi)\right)(x) \right|^2~dx \\
					\label{808 2}
& \leq \int_{\fR^d} \left| |x|^{\frac{d}{2} +\delta -(d_0-1)} \cF^{-1}\left( D_{\xi^j}^{d_0-1} \hat q(s,t,\xi)\right)(x) \right|^2~dx.
\end{align}
Moreover by Plancherel's theorem, the last term above  equals to
\begin{align}
						\label{808 3}
N(d)\int_{\fR^d} \left| (-\Delta)^{\varepsilon/2} \left(D_{\xi^j}^{d_0-1} \hat q(s,t,\xi) \right) \right|^2~d\xi.
\end{align}
Obviously, $\varepsilon \in \left(0,1 \wedge \frac{d+\gamma-2(d_0-1)}{2} \right)$. Using the integral representation of the Fractional Laplacian operator 
$(-\Delta)^{\varepsilon/2}$ we get
\begin{align*}
(-\Delta)^{\varepsilon/2} (D_{\xi^j}^{d_0-1} \hat q(s,t,\xi))
=N \int_{\fR^d} \frac{ D_{\xi^j}^{d_0-1} \hat q(s,t,\xi+\eta)-D_{\xi^j}^{d_0-1} \hat q(s,t,\xi) }{|\eta|^{d+\varepsilon}} d\eta.
\end{align*}
We divide $(-\Delta)^{\varepsilon/2} (D_{\xi^j}^{d_0-1} \hat q(s,t,\xi))$ into two terms:
\begin{align*}
&N \int_{|\eta| \geq 1} \frac{ D_{\xi^j}^{d_0-1} \hat q(s,t,\xi+\eta)-D_{\xi^j}^{d_0-1} \hat q(s,t,\xi) }{|\eta|^{d+\varepsilon}} d\eta \\
&\quad+N \int_{|\eta| < 1} \frac{ D_{\xi^j}^{d_0-1} \hat q(s,t,\xi+\eta)-D_{\xi^j}^{d_0-1} \hat q(s,t,\xi) }{|\eta|^{d+\varepsilon}} d\eta
=: \cI_1(s,t,\xi) + \cI_2(s,t,\xi).
\end{align*}
By Minkowski's inequality and Lemma \ref{ker bd lem 2},
\begin{align*}
\left[\int_{\fR^d} \left|\cI_1(s,t,\xi) \right|^2 d\xi \right]^{1/2}
&\leq 2 \left\| D_{\xi^j}^{d_0-1} \hat q(s,t,\cdot)\right\|_{L_2(\fR^d)}
\int_{|\eta| \geq 1} \frac{1}{|\eta|^{d+\varepsilon}} d\eta \\
&\leq N(d,\nu,\gamma).
\end{align*}
We split $\cI_2$ into $\cI_{2,1}$, $\cI_{2,2}$, and $\cI_{2,3}$, where
\begin{align*}
\cI_{2,1}(s,t,\xi):=\int_{|\eta| < 1} 1_{|\eta| < \frac{|\xi|}{2}}\frac{ D_{\xi^j}^{d_0-1} \hat q(s,t,\xi+\eta)-D_{\xi^j}^{d_0-1} \hat q(s,t,\xi) }{|\eta|^{d+\varepsilon}} d\eta
\end{align*}
\begin{align*}
\cI_{2,2}(s,t,\xi):=\int_{|\eta| < 1} 1_{|\eta| \geq \frac{|\xi|}{2}} \frac{ D_{\xi^j}^{d_0-1} \hat q(s,t,\xi+\eta)}{|\eta|^{d+\varepsilon}} d\eta,
\end{align*}
and
\begin{align*}
\cI_{2,3}(s,t,\xi):=-\int_{|\eta| < 1} 1_{|\eta| \geq \frac{|\xi|}{2}} \frac{D_{\xi^j}^{d_0-1} \hat q(s,t,\xi) }{|\eta|^{d+\varepsilon}} d\eta.
\end{align*}
By the fundamental theorem of calculus,
\begin{align*}
|\cI_{2,1}(s,t,\xi)| 
\leq 
\int_0^1 \int_{|\eta| < 1} 1_{|\eta| < \frac{|\xi|}{2}}\frac{ \left|\nabla D_{\xi^j}^{d_0-1} \hat q(s,t,\xi+ \theta \eta) \right|}{|\eta|^{d+\varepsilon-1}} d\eta d \theta.
\end{align*}
Hence by Minkowski's inequality and Lemma \ref{ker bd lem 3}, 
\begin{align*}
\|\cI_{2,1}(s,t,\cdot)\|^2_{L_2(\fR^d)}
&\leq 
\left[ \int_{|\eta| < 1} \left(\int_{|\eta| < |\xi|}  \left|\nabla D_{\xi^j}^{d_0-1} \hat q(s,t,\xi)\right|^2d\xi \right)^{1/2}\frac{1}{|\eta|^{d+\varepsilon-1}} d\eta \right]^2 \\
&\leq 
N\left[ \int_{|\eta| < 1} \frac{1+|\eta|^{(d+3\gamma-2d_0)/2}}{|\eta|^{d+\varepsilon-1}} d\eta \right]^2
\leq N(d,\nu,\gamma)
\end{align*}
since 
$$
(d+3\gamma-2d_0)/2 -d -\varepsilon +1 >-d.
$$
On the other hand, 
if $|\xi| \geq 2$, then $\cI_{2,2}(s,t,\xi)=\cI_{2,3}(s,t,\xi)=0$ and thus we may assume $|\xi| \leq 2$.
Recalling the range of $\varepsilon$, we have
$$
\varepsilon  +\gamma  < \frac{d+3\gamma-2(d_0-1)}{2}.
$$
Hence  by H\"older's inequality and Lemma \ref{ker bd lem 2},
\begin{align*}
&|\cI_{2,2}(s,t,\xi)| \\
&\leq 
\left[\int_{|\eta| < 1} 1_{|\eta| \geq \frac{|\xi|}{2}} \frac{|\xi+\eta|^{2\varepsilon+2\gamma}}{|\eta|^{2d+2\varepsilon}} d\eta\right]^{1/2}
\left[\int_{\fR^d} \left| |\xi+\eta|^{-\varepsilon -\gamma }D_{\xi^j}^{d_0-1} \hat q(s,t,\xi+\eta) \right|^2 d\eta\right]^{1/2} \\
&\leq N
\left[\int_{|\eta| < 1} 1_{|\eta| \geq \frac{|\xi|}{2}}|\eta|^{-2d+2\gamma} d\eta\right]^{1/2}
\left[\int_{\fR^d} \left| |\eta|^{-\varepsilon-\gamma}D_{\xi^j}^{d_0-1} \hat q(s,t,\eta) \right|^2 d\eta\right]^{1/2} \\
&\leq N \left(1+|\xi|^{-\frac{d}{2} +\gamma}\right).
\end{align*}
Therefore we have
\begin{align*}
\|\cI_{2,2}(s,t,\cdot)\|^2_{L_2(\fR^d)}
\leq N \int_{|\xi| <2} \left(1+|\xi|^{-d+2\gamma}\right) d\xi \leq N(d,\nu,\gamma). 
\end{align*}
Finally by Lemma \ref{ker bd lem 2} again,
\begin{align*}
\|\cI_{2,3}(s,t,\cdot)\|^2_{L_2(\fR^d)}
\leq N.
\end{align*}
Due to (\ref{808 2}) and (\ref{808 3}), combining all estimates for $\cI_1, \cI_{2,1}, \cI_{2,2}, \cI_{2,3}$, we have
(\ref{808 1}). 
The lemma is proved. 
\end{proof}
\begin{remark}
If $\gamma$ is not small, Lemma \ref{ker int fin} is easily obtained from properties of the Fourier tansform.
Indeed,
\begin{align*}
& \int_{\fR^d} \left| |x|^{d_0} \cF^{-1}\left( D_{\xi^j}^{d_0-1} \hat q(s,t,\xi)\right)(x) \right|^2 dx \\
& \leq N\sum_{j=1}^d\int_{\fR^d} \left| (ix^j)^{d_0} \cF^{-1}\left( \hat q(s,t,\xi)\right)(x) \right| 
^2~dx \\
& = N\sum_{j=1}^d\int_{\fR^d} \left| D_{\xi^j}^{d_0} \hat q(s,t,\xi)\right|^2~d\xi.
\end{align*}
Due to (\ref{el con}) and (\ref{pseudo con}), the above term is finite if $3\gamma +d>2d_0$. 

\end{remark}

\begin{lemma}
                    \label{main cor}
Let $\delta \in \left( 0, \frac{1}{2} \wedge \frac{\gamma}{2} \right)$.
Then there exists a constant $N(d,\nu,\gamma, \delta)$ such that for all $0<s<t$, $c>0$, $a \in \fR$,
\begin{align}
                    \label{m c 1}
\int_s^t  \left|\int_{|z| \geq c} |(-\Delta)^{\gamma/4}p(r,t, z)| ~dz \right|^2 dr
\leq N\left((t-s)^{1/\gamma} c^{-1} \right)^{2\delta},
\end{align}
\begin{align}
                    \label{m c 2}
\int_{0}^a \left[\int_{\fR^d} \big|(-\Delta)^{\gamma/4}p(r,t, z+h)-(-\Delta)^{\gamma/4}p(r,t, z)\big| ~dz \right]^2 dr
\leq N \left(|h|(t - a)^{-1/\gamma}\right)^2,
\end{align}
and
\begin{align}
                        \label{m c 3}
\int_{0}^a \left[\int_{\fR^d} |(-\Delta)^{\gamma/4}p(r,t, z)-  (-\Delta)^{\gamma/4}p(r,s, z)| ~dz \right]^2dr
\leq N\left((t-s)(s -a)^{-1}\right)^2.
\end{align}
\end{lemma}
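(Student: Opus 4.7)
The unifying strategy is to reduce each estimate to the rescaled kernels $q_1,q_2$ via the scaling identities \eqref{rela p q1}, \eqref{rela p q2} (which can be verified directly by changing variables $\eta=(t-r)^{1/\gamma}\xi$ in the Fourier representation of the kernel) and then to exploit the weighted $L_2$ bounds of Lemma \ref{ker int fin} through a Cauchy--Schwarz split into near-field and far-field $w$-regions. The near-field contribution is controlled by the pointwise bound \eqref{ker bds}, while on the far-field set $\{|w|\geq R\}$, Cauchy--Schwarz with weight $|w|^{d/2+\delta}$ gives $\int_{|w|\geq R}|f(w)|\,dw\leq NR^{-\delta}$ whenever $|w|^{d/2+\delta}f$ is uniformly $L_2$-bounded, since the tail $\int_{|w|\geq R}|w|^{-d-2\delta}dw\leq NR^{-2\delta}$ is finite for $\delta>0$.

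For \eqref{m c 1}, I would change variables $w=(t-r)^{-1/\gamma}z$ via \eqref{rela p q1} to rewrite the inner integral as $(t-r)^{-1/2}\int_{|w|\geq c(t-r)^{-1/\gamma}}|(-\Delta)^{\gamma/4}q_1(r,t,w)|\,dw$. Applying the far-field Cauchy--Schwarz argument with $R=c(t-r)^{-1/\gamma}$ together with \eqref{lemma 531} bounds this by $Nc^{-\delta}(t-r)^{\delta/\gamma-1/2}$. Squaring and integrating over $r\in(s,t)$, the factor $(t-r)^{-1+2\delta/\gamma}$ is integrable at $r=t$ (because $\delta<\gamma/2$), and a direct evaluation of the $r$-integral produces exactly $((t-s)^{1/\gamma}c^{-1})^{2\delta}$ up to a constant.

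For \eqref{m c 2}, the plan is to use the fundamental theorem of calculus in the spatial variable,
\[
(-\Delta)^{\gamma/4}p(r,t,z+h)-(-\Delta)^{\gamma/4}p(r,t,z)=\int_0^1 h\cdot\nabla_z(-\Delta)^{\gamma/4}p(r,t,z+\theta h)\,d\theta,
\]
which by translation invariance reduces the inner integral to $|h|\int|\nabla_z(-\Delta)^{\gamma/4}p(r,t,z)|\,dz$. Differentiating \eqref{rela p q1} in $z$ introduces an extra factor $(t-r)^{-1/\gamma}$, so this becomes $(t-r)^{-1/2-1/\gamma}\int|\nabla(-\Delta)^{\gamma/4}q_1(r,t,w)|\,dw$, and the $w$-integral is bounded uniformly via the near/far-field split using \eqref{ker bds} and \eqref{lemma 532}. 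Squaring the resulting $N|h|(t-r)^{-1/2-1/\gamma}$ and integrating over $r\in(0,a)$ yields $N|h|^2(t-a)^{-2/\gamma}$, which is the stated bound.

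For \eqref{m c 3}, I would apply the fundamental theorem of calculus in time, $(-\Delta)^{\gamma/4}p(r,t,z)-(-\Delta)^{\gamma/4}p(r,s,z)=\int_s^t \partial_\tau(-\Delta)^{\gamma/4}p(r,\tau,z)\,d\tau$, then Minkowski's inequality in $\tau$. Using the scaling identity for $\partial_\tau(-\Delta)^{\gamma/4}p$ (analogous to \eqref{rela p q2}, obtained directly from the Fourier definition of $q_2$) together with the same near/far-field split based on \eqref{lemma 533} and \eqref{ker bds}, one obtains $\int|\partial_\tau(-\Delta)^{\gamma/4}p(r,\tau,z)|\,dz\leq N(\tau-r)^{-3/2}$. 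For $r<s$ and $\tau\geq s$ one has $\tau-r\geq s-r$, so the $\tau$-integral is at most $N(t-s)(s-r)^{-3/2}$; squaring and integrating over $r\in(0,a)$ using $\int_0^a(s-r)^{-3}dr\leq \tfrac12(s-a)^{-2}$ then delivers the claimed $N((t-s)(s-a)^{-1})^2$. The main technical obstacle throughout is the consistent bookkeeping of the exact powers of $(t-r)$, $(\tau-r)$ and $(s-r)$ in each scaling and splitting step; the assumption $0<\delta<\tfrac{\gamma}{2}\wedge\tfrac{1}{2}$ is precisely what is needed so that both the tail weight $|w|^{-d-2\delta}$ is integrable at infinity and the singular $r$-factors are integrable up to the relevant endpoints.
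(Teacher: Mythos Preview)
Your proposal is correct and follows essentially the same route as the paper: each of the three estimates is handled by the scaling identities \eqref{rela p q1}--\eqref{rela p q2}, a Cauchy--Schwarz split against the weight $|w|^{d/2+\delta}$, and the uniform bounds of Lemma \ref{ker int fin} together with \eqref{ker bds}. The only cosmetic difference is in \eqref{m c 3}, where the paper applies the mean value theorem in the $t$-variable (with an intermediate point $\theta t+(1-\theta)s$) rather than your FTC-plus-Minkowski argument in $\tau$; both yield the same $(t-s)^2(s-a)^{-2}$ bound.
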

\begin{proof}
First we prove (\ref{m c 1}).
By (\ref{rela p q1}), H\"older's inequality, and (\ref{lemma 531}),
\begin{align*}
&\left|\int_{|z| \geq c} |(-\Delta)^{\gamma/4}p(r,t, z)| ~dz \right|^2 \\
&=(t-r)^{-1} \left|\int_{(t-r)^{1/\gamma}|z| \geq c} |(-\Delta)^{\gamma/4} q_1(r,t, z)| ~dz \right|^2 \\
&\leq (t-r)^{-1}
\int_{(t-r)^{1/\gamma}|z| \geq c}  |z|^{-d-2\delta}  ~dz \int_{(t-r)^{1/\gamma}|z| \geq c} \left| |z|^{\frac{d}{2}+\delta} (-\Delta)^{\gamma/4} q_1(r,t, z)\right|^2 ~dz  \\
&\leq (t-r)^{-1+(2\delta)/\gamma} c^{-2\delta}.
\end{align*}
Hence we have
\begin{align*}
\int_s^t \left|\int_{|z| \geq c} (-\Delta)^{\gamma/4}p(r,t, z) ~dz \right|^2 dr
&\leq N  \left((t-s)^{1/\gamma} c^{-1} \right)^{2\delta}.
\end{align*}

Next we prove (\ref{m c 2}). From (\ref{rela p q1}),
\begin{align*}
\frac{\partial}{\partial x^i}(-\Delta)^{\gamma/4}p(r,t, z)
=(t-r)^{-d/\gamma}(t-r)^{-1/2-1/\gamma}\frac{\partial}{\partial x^i} (-\Delta)^{\gamma/4}q_1(r,t, (t-r)^{-1/\gamma}z),
\end{align*}
and by H\"older's inequality, (\ref{ker bds}), and (\ref{lemma  532}),
\begin{align*}
& \left[\int_{\fR^d} \Big|\frac{\partial}{\partial x^i} \Delta^{\gamma/2}q_1(r,t, z) \Big| dz \right]^2 \\
&\leq  N+\int_{|z| \geq 1} |z|^{-d-2\delta} dz \int_{\fR^d} \left| |z|^{\frac{d}{2}+\delta} \frac{\partial}{\partial x^i} \Delta^{\gamma/2}q_1(r,t, z) \right|^2 dz \leq N,
\end{align*}
where $N$ is independent of $t$ and $r$.
Therefore, by the fundamental theorem of calculus,
\begin{align*}
&\int_{0}^a \left[\int_{\fR^d} \big| (-\Delta)^{\gamma/4}p(r,t, z+h)-(-\Delta)^{\gamma/4}p(r,t, z) \big| ~dz \right]^2 dr  \\
&\leq |h|^2\int_{0}^a \left[\int_{\fR^d} |\nabla \Delta^{\gamma/2}p(r,t, z)| ~dz\right]^2 dr  \\
&\leq |h|^2\int_{0}^a (t-r)^{-1-2/\gamma} \left(\int_{\fR^d} \Big|\frac{\partial}{\partial x^i} (-\Delta)^{\gamma/4}q_1(r,t, z) \Big|~dz\right)^2 dr \\
&\leq N|h|^2\int_{0}^a (t-r)^{-1-2/\gamma} dr \leq N|h|\int_{t-a}^\infty r^{-1-2/\gamma} dr=N\left(|h|(t - a)^{-1/\gamma}\right)^2.
\end{align*}

It only remains to prove (\ref{m c 3}).
By the mean-value theorem and (\ref{rela p q2}),
\begin{align*}
&|(-\Delta)^{\gamma/4}p(r,t, z)-  (-\Delta)^{\gamma/4}p(r,s, z)| \\
&\leq |t-s|(\theta t + (1-\theta)s-r)^{-d/\gamma -3/2}\left|q_2(r, \theta t + (1-\theta)s, (\theta t + (1-\theta)s-r)^{-1/\gamma}z)\right|,
\end{align*}
where $\theta \in [0,1]$.
Moreover by H\"older's inequality, (\ref{ker bds}), and (\ref{lemma 533}),
$$
\int_{\fR^d} | q_2 (r, \theta t + (1-\theta)s, z)|~dz <N,
$$
where $N$ is independent of $t$, $s$, $r$, and $\theta$.
Therefore,
\begin{align*}
&\int_{0}^a \left[\int_{\fR^d} \left|(-\Delta)^{\gamma/4}p(r,t, z)-  (-\Delta)^{\gamma/4}p(r,s, z)\right| ~dz\right]^2 dr\\
&\leq \int_{0}^a \frac{|t-s|^2}{\big(\theta t + (1-\theta)s -r \big)^3} dr \leq |t-s|^2(s-a)^{-2}.
\end{align*}
The lemma is proved.
\end{proof}
In the following corollary, we finally prove that  the kerenel
$$K(r,t,z,x):= 1_{0<r<t}(-\Delta)^{\gamma/4}p(r,t, x-z)$$ satisfies Assumption
\ref{as hor}.
Recall
$$
\rho(X,Y)= |t-s|^{1/\gamma} + |x-y|.
$$
For $r >0$ and $X=(t,x), Y=(s,y)\in (0,\infty)\times \fR^{d}$, set
\begin{align*}
A(r,X,Y)
&:= \left\{z \in \fR^d: \rho(X,Z) \geq 4\cdot 2^{1/\gamma}\rho(X,Y) \right\} \\
&= \left\{z \in \fR^d: |t-r|^{1/\gamma} +|x-z| \geq 4 \cdot 2^{1/\gamma}(|t-s|^{1/\gamma} + |x-y|) \right\},
\end{align*}
where $Z=(r,z)$.
\begin{corollary}
There is a constant $N=N(d,\nu,\gamma)$ so that 
for any $X=(t,x), Y=(s,y) \in (0,\infty) \times \fR^d$ and $r>0$,
\begin{align*}
&\int_{0}^\infty \left[ \int_{A(r,X,Y)} \left|1_{0<r<t}(-\Delta)^{\gamma/4}p(r,t,x-z) - 1_{0<r<s}(-\Delta)^{\gamma/4}p(r,s, y-z)\right| ~dz \right]^2 dr \\
&\leq N.
\end{align*}

\end{corollary}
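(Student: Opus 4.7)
Assume without loss of generality that $s \le t$ (otherwise the argument below applies after interchanging the roles of $(t,x)$ and $(s,y)$). Write $R := \rho(X,Y) = (t-s)^{1/\gamma} + |x-y|$, set $c_* := 2 \cdot 2^{1/\gamma} R$ so that the constant $4 \cdot 2^{1/\gamma} R$ appearing in the definition of $A(r,X,Y)$ equals $2c_*$, and put $\tilde s := \max\{0,\, t - c_*^{\gamma}\}$. The plan is to split the $r$-integral (which vanishes for $r \ge t$) into the three pieces $[s,t]$, $[\tilde s, s]$ and $(0,\tilde s)$, and to match each piece with precisely one of the bounds \eqref{m c 1}, \eqref{m c 2}, \eqref{m c 3} in Lemma \ref{main cor}. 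A short calculation shows $(t-s) \le R^{\gamma} \le \tfrac12 c_*^{\gamma}$ (since $c_*^{\gamma} = 2^{\gamma+1} R^{\gamma}$), which guarantees $\tilde s \le s$ so that the three sub-ranges exhaust $(0,t)$.

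On $[s,t]$ only the first kernel contributes; for $z \in A(r,X,Y)$ one has $|x-z| \ge 2c_* - (t-r)^{1/\gamma} \ge 2c_* - (t-s)^{1/\gamma} \ge c_*$, so the substitution $u = x-z$ together with \eqref{m c 1} bound this piece by $N\big((t-s)^{1/\gamma}/c_*\big)^{2\delta} \le N$. On $[\tilde s, s]$ we have $(t-r)^{1/\gamma} \le c_*$, forcing $|x-z| \ge c_*$ on $A(r,X,Y)$ and hence $|y-z| \ge c_* - |x-y| \ge c_*/2$. Splitting the integrand as $|K(r,t,x-z)| + |K(r,s,y-z)|$ and applying \eqref{m c 1} to each kernel separately---over $[\tilde s, t]$ with cutoff $c_*$ for the first term, and over $[\tilde s, s]$ with cutoff $c_*/2$ for the second---yields an $O(1)$ bound because both $(t-\tilde s)^{1/\gamma}$ and $(s-\tilde s)^{1/\gamma}$ are controlled by $c_*$.

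On the remaining range $(0,\tilde s)$ (non-empty only when $t > c_*^{\gamma}$) we have the time separations $t - r \ge c_*^{\gamma}$ and $s - r \ge \tfrac12 c_*^{\gamma}$. This room allows us to drop the restriction to $A(r,X,Y)$ entirely and split via
\[
|K(r,t,x-z) - K(r,s,y-z)| \le |K(r,t,x-z) - K(r,t,y-z)| + |K(r,t,y-z) - K(r,s,y-z)|.
\]
Applying \eqref{m c 2} with $a = \tilde s$ and $h = x-y$ to the spatial term gives $N(|x-y|/c_*)^2 \le N$, and applying \eqref{m c 3} with $a = \tilde s$ to the temporal term gives $N\big((t-s)/(s-\tilde s)\big)^2 \le N$ by the lower bound on $s-\tilde s$ just noted.

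The main technical point is the tuning of the cutoff $\tilde s = t - c_*^{\gamma}$: on $[\tilde s, s]$ the set $A(r,X,Y)$ still forces a spatial separation of order $R$, making \eqref{m c 1} produce an $O(1)$ bound, while on $(0,\tilde s)$ the time gaps $t-r$ and $s-r$ are both of order at least $R^{\gamma}$, making the quotients in \eqref{m c 2} and \eqref{m c 3} bounded. Once this geometric matching is in place, each piece reduces to a direct application of Lemma \ref{main cor}, and no further delicate estimate is needed.
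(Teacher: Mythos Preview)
Your argument is correct and uses the same three estimates \eqref{m c 1}--\eqref{m c 3} as the paper, but your decomposition of the $r$-integral is organized differently. The paper splits only once, at $r=2s-t$, and on the far range $[0,2s-t]$ applies the triangle inequality to separate the spatial and temporal increments; the temporal piece is handled directly by \eqref{m c 3}, but for the spatial piece the paper must distinguish the cases $|x-y|\le (t-s)^{1/\gamma}$ and $|x-y|>(t-s)^{1/\gamma}$, introducing in the latter case a further split at $r=s-|x-y|^\gamma$. Your choice of the intermediate cutoff $\tilde s=t-c_*^{\gamma}$, with $c_*$ proportional to the full quasi-distance $R=(t-s)^{1/\gamma}+|x-y|$ rather than to $(t-s)^{1/\gamma}$ alone, absorbs this dichotomy: the single scale $c_*$ simultaneously controls the spatial cutoff needed for \eqref{m c 1} on $[\tilde s,s]$ and the time separation needed for \eqref{m c 2}--\eqref{m c 3} on $(0,\tilde s)$. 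The price is an extra (but harmless) sub-interval $[\tilde s,s]$ on which you treat the two kernels separately; the gain is the elimination of the case analysis. Both routes are equally rigorous and yield the same constant dependence.
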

\begin{proof}
We use the notation $(-\Delta)^{\gamma/4}p(r,t,x)$
instead of $1_{0<r<t}(-\Delta)^{\gamma/4}p(r,t,x)$.
In other words, we assume that $(-\Delta)^{\gamma/4}p(r,t,x)=0$ unless $0<r<t$. 
Moreover  we may assume $t\geq s$ without loss of generality.  
Since the proof of the case $t=s$ is simpler, we only prove the case $t>s$.

Fix a constant $\delta \in \left( 0, \frac{1}{2} \wedge \frac{\gamma}{2} \right)$. 
Denote
\begin{align*}
\cI(r,X,Y)  
&=\left[ \int_{A(r,X,Y)} \left| (-\Delta)^{\gamma/4}p(r,t,x-z) - (-\Delta)^{\gamma/4}p(r,s, y-z)\right| ~dz\right]^2.
\end{align*}
Obviously $\cI(r,X,Y)=0$ if $r\geq t$.  Thus
\begin{align*}
\int_{0}^\infty \cI(r,X,Y) dr
&=\int_{2s-t}^t \cI(r,X,Y) dr +\int_{0}^{2s-t} \cI(r,X,Y) dr \\
&=:\cI_1(X,Y) + \cI_2(X,Y).
\end{align*}
First we estimate $\cI_1(X,Y)$.
By (\ref{m c 1}),
\begin{align*}
\cI_1(X,Y)
&\leq \int_{2s-t}^t\left[ \int_{|z| \geq |t-s|^{1/\gamma} } \left| (-\Delta)^{\gamma/4}p(r,t,z) \right| ~dz\right]^2dr \\
&\quad+\int_{2s-t}^s\left[ \int_{|z| \geq |t-s|^{1/\gamma} } \left| (-\Delta)^{\gamma/4}p(r,s,z) \right| ~dz\right]^2dr
\leq N.
\end{align*}
We split $\cI_2$.
Observe
\begin{align*}
&\cI_2  
\leq \cI_{2,1}+ \cI_{2,2}\\
&:= \int_{0}^{2s-t} \left[ \int_{A(r,X,Y)} \left|(-\Delta)^{\gamma/4}p(r,t,x-z) - (-\Delta)^{\gamma/4}p(r,t, y-z)\right| ~dz\right]^2 dr  \\
&+\int_{0}^{2s-t} \left[ \int_{A(r,X,Y)} \left|(-\Delta)^{\gamma/4}p(r,t,y-z) - (-\Delta)^{\gamma/4}p(r,s, y-z)\right| ~dz\right]^2 dr.
\end{align*}
If $|x-y| \leq (t-s)^{1/\gamma}$ then by (\ref{m c 2}),
\begin{align*}
\cI_{2,1} \leq N\left(|x-y| (t-s)^{-1/\gamma}\right)^2 \leq N.
\end{align*}
On the other hand, if $|x-y| > (t-s)^{1/\gamma}$, then
\begin{align*}
\cI_{2,1}\leq 2\cI_{2,1,1}+\cI_{2,1,2},
\end{align*}
where
\begin{align*}
&\cI_{2,1,1}:= \int_{s-|x-y|^\gamma}^{t} \left[ \int_{|z| \geq |t-s|^{1/\gamma} + |x-y|} \left|(-\Delta)^{\gamma/4}p(r,t,z)\right| ~dz\right]^2 dr,
\end{align*}
and
\begin{align*}
&\cI_{2,1,2}  \\
&:= \int_{0}^{s-|x-y|^\gamma} \left[ \int_{\fR^d} \left|(-\Delta)^{\gamma/4}p(r,t,x-z) - (-\Delta)^{\gamma/4}p(r,t, y-z)\right| ~dz\right]^2 dr.
\end{align*}
By (\ref{m c 1}) again,
$$
\cI_{2,1,1} \leq N \left(\left(t-s + |x-y|^\gamma\right)^{-1/\gamma}\left((t-s)^{1/\gamma}+|x-y|\right)\right)^{2\delta} \leq N
$$
and by (\ref{m c 2})
\begin{align*}
\cI_{2,1,2} \left(|x-y|\left(t - s +|x-y|^\gamma \right)^{-1/\gamma}\right)^2 \leq N.
\end{align*}
It only remains to estimate $\cI_{2,2}$. However, this is an easy consequence of (\ref{m c 3})
since $2s-t <t$.
Indeed,
\begin{align*}
\cI_{2,2} \leq N\left((t-s)|t-s|^{-1}\right)^2 \leq N.
\end{align*}
The corollary is proved.
\end{proof}

Finally,  applying Theorem \ref{main thm} with
$$
\bT_{\varepsilon} = \bT_{\psi, \varepsilon} \quad \text{and} \quad  \bT = \bT_{\psi},
$$
we obtain (\ref{apl main}).

\subsection{Infinitesimal generators of  subordinate Brownian motions}

In this subsection we consider the infinitesimal generators of subordinate Brownian motions. In general the symbols of such operators do not satisfy (\ref{el con}) which is assumed in the previous subsection.

Let $S_t$  be a subordinator, that is, an increasing L\'evy process taking values in $[0,\infty)$ with $S_0=0$. A subordinator $S$ is completely characterized by its Laplace exponent $\phi$, i.e. 
$\bE e^{-\lambda S_t}=e^{-t \phi(\lambda)}$ for $\lambda>0$. Actually a function function $\phi:(0,\infty)\to (0,\infty)$
with $\phi(0+)=0$ is  a Laplace exponent of a subordinator if and only if it is a Bernstein function (i.e. $(-1)^nD^n \phi \leq 0, \forall n$). Also  it is   of
the form
\begin{equation*}\label{e:bernstein-function}
\phi(\lambda)=b \lambda +\int_{(0,\infty)}(1-e^{-\lambda t})\, \mu(dt)\, ,\quad \lambda >0\, ,
\end{equation*}
where $b\ge 0$ and $\mu$ is a measure on $(0,\infty)$ satisfying $\int_{(0,\infty)}(1\wedge t)\, \mu(dt)<\infty$, called the L\'evy measure.   Let $B_t$ be a $d$-dimensional Brownian motion independent of $S_t$. Then $\phi(\Delta)$ can be defined as the infinitesimal generator of the subordinate Brownian motion $B_{S_t}$:
$$
\phi(\Delta)f(x)=\lim_{t\to 0} \frac{\bE f(x+B_{S_t})-f(x)}{t}, \quad f\in C^2_b(\fR^d),
$$
and its integral version is 
\begin{equation}\label{e:phirep}
b \Delta f(x)+\int_{\R^d}\left(f(x+y)-f(x)-\nabla f(x) \cdot y {\mathbf 1}_{\{|y|\le 1\}}\right)\, J(y)\, dy \, ,
\end{equation}
where $J(x)=j(|x|)$ with $j:(0,\infty)\to (0,\infty)$ given by
$$
j(r)=\int_0^{\infty} (4\pi t)^{-d/2} e^{-r^2/(4t)}\, \mu(dt)\, .
$$
 See e.g. \cite{schilling2012bernstein} for more details. In general for $n=0,1,2,\cdots$, we define 
 $\phi(\Delta)^{n/2}$ on the Schwartz space $\cS$ as the operator with symbol $-\phi(|\xi|^2)$, i.e.
$$
\phi(\Delta)^{n/2}f (x) 
:=-\phi(-\Delta)^{n/2}f (x) 
:= \cF^{-1} \left[ -\phi(|\xi|^2)^{n/2} \cF f(\xi) \right](x).
$$
Consider the operator $A(t) = \phi(\Delta)$. Then (\ref{main eqn}) has a solution $u$ given by
$$
u(t,x)=\int_0^{t} \int_{\fR^d} p(t-s, x-y) g^k(s,y)dy dW_s^k,
$$
where
$$
p(t,x)= \cF^{-1} \left[ \exp \left(- t\phi(|\xi|^2)\right) \right](x).
$$
Let $\phi^{-1}$ denote the generalized inverse of $\phi$, i.e.
$$
\phi^{-1}(t) := \inf\{ s > 0: \phi(s) \geq t \}.
$$

\begin{assumption}
 \label{ass berstein}
(i)  There exists a constant $N$ such that for all $t\leq T\in (0,\infty]$ and
$x\in \fR^d$
\begin{align}
                        \label{as ker}
\left|\phi(\Delta)^{1/2} p(t, \cdot)(x) \right|
 \leq   N \left(t^{-1/2}   (\phi^{-1}(t^{-1}))^{d/2}  \wedge \frac{\phi(| x|^{-2})^{1/2}}{ | x|^{d}}\right),
\end{align}
\begin{align}
                        \label{as ker 2}
\left|\phi(\Delta)^{1/2} \nabla p(t, \cdot)(x) \right|
 \leq   N \left(t^{-1/2}   (\phi^{-1}(t^{-1}))^{(d+1)/2}  \wedge \frac{\phi(| x|^{-2})^{1/2}}{ | x|^{d+1}}\right),
\end{align}
and
\begin{align}
                        \label{as ker 3}
\left|\phi(\Delta)^{3/2} p(t, \cdot)(x) \right|
 \leq   N \left(t^{-3/2}   (\phi^{-1}(t^{-1}))^{d/2}  \wedge t^{-1}\frac{\phi(| x|^{-2})^{1/2}}{ | x|^{d}}\right).
\end{align}

(ii) $\phi$ satisfies the following scaling property: there exist
positive constants $N_1$, $N_2$, $\delta_1$, and $\delta_2$ so that
\begin{align}
                    \label{as sca}
N_1 \left(\frac{b}{a}\right)^{\delta_1} \leq \frac{\phi(b)}{\phi(a)}
\leq N_2\left(\frac{b}{a}\right)^{\delta_2}, \quad \quad \forall
\,\, 0<a\leq b.
\end{align}

\end{assumption}

Using (\ref{as sca}) one can find  constants $\bar \delta_1$, $\bar
\delta_2$, $\bar N_1$, and $\bar N_2$ 
depending only on $\delta_1$,
$\delta_2$, $N_1$, and $N_2$ 
so that 
\begin{align}
                        \label{phi inverse}
\bar N_1 \left(\frac{b}{a}\right)^{\bar \delta_1} \leq
\frac{\phi^{-1}(b)}{\phi^{-1}(a)} \leq \bar N_2\left(\frac{b}{a}
\right)^{\bar \delta_2}, \quad \quad 0<a\leq b.
\end{align}
 Furthermore, 
$$
\lim_{t \uparrow \infty} \phi(t)=\lim_{t \uparrow \infty} \phi^{-1}(t)=\infty, \quad  \quad
\lim_{t \downarrow 0} \phi(t)=\lim_{t \downarrow 0} \phi^{-1}(t)=0,
$$
and
\begin{align}
                        \label{at}
 \phi( \phi^{-1}(t))=t.
\end{align}

\begin{example}
A sufficient condition to (\ref{as ker})-(\ref{as sca})  can be founded e.g.  in
\cite{kim2013parabolic}:

\noindent (H1): $\exists$ constants $0<\delta_1\le \delta_2
<1$ and $c_1, c_2>0$  such that
\begin{equation*}\label{e:H1}
c_1\lambda^{\delta_1} \phi(t) \le \phi(\lambda t) \le c_2
\lambda^{\delta_2} \phi(t), \quad \lambda \ge 1, t \ge 1\, ;
\end{equation*}
\noindent (H2): $\exists$  constants $0<\delta_3 \le 1$ and
$c_3>0$  such that
\begin{equation*}\label{e:H2}
 \phi(\lambda t) \le c_3 \lambda^{\delta_3} \phi(t), \quad \lambda \le 1, t \le 1\, .
\end{equation*}
Actually using (H1) and (H2) one can prove (see \cite{kim2013parabolic})
\begin{align*}
\left|\phi(\Delta)^{n/2} D^\beta p(t, \cdot)(x) \right|
 \leq   N \left(t^{-n/2}   (\phi^{-1}(t^{-1}))^{(d+|\beta|)/2}  \wedge t^{-(n-1)/2}\frac{\phi(| x|^{-2})^{1/2}}{ | x|^{d+|\beta|}}\right)
\end{align*}
for any $n\leq 3$ and multi-index $\beta$ with $|\beta| \leq 2$.

Here are some examples of Bernstein functions
satisfying (H1) and (H2):
\begin{itemize}
\item[(1)] $\phi(\lambda)=\lambda^\alpha + \lambda^\beta$, $0<\alpha<\beta<1$;
\item[(2)] $\phi(\lambda)=(\lambda+\lambda^\alpha)^\beta$, $\alpha, \beta\in (0, 1)$;
\item[(3)] $\phi(\lambda)=\lambda^\alpha(\log(1+\lambda))^\beta$, $\alpha\in (0, 1)$,
$\beta\in (0, 1-\alpha)$;
\item[(4)] $\phi(\lambda)=\lambda^\alpha(\log(1+\lambda))^{-\beta}$, $\alpha\in (0, 1)$,
$\beta\in (0, \alpha)$;
\item[(5)] $\phi(\lambda)=(\log(\cosh(\sqrt{\lambda})))^\alpha$, $\alpha\in (0, 1)$;
\item[(6)] $\phi(\lambda)=(\log(\sinh(\sqrt{\lambda}))-\log\sqrt{\lambda})^\alpha$, $\alpha\in (0, 1)$.
\end{itemize}
For example, the subordinate Brownian motion corresponding to the
example (1) $\phi(\lambda)=\lambda^\alpha + \lambda^\beta$ is the
sum of two independent symmetric $\alpha$ and $\beta$ stable
processes, and its infinitesimal generator is
$-(-\Delta)^{\beta/2}-(-\Delta)^{\alpha/2}$.
\end{example}

Define
\begin{align*}
\bT_{\phi,\varepsilon}g
:=\int_0^{t-\varepsilon} \int_{\fR^d} \phi(\Delta)^{1/2}p(t-r, x-y) g^k(r,y)dy dW_r^k
\end{align*}
and
\begin{align*}
\bT_{\phi}g
:= \lim_{\varepsilon \downarrow 0}\int_0^{t-\varepsilon} \int_{\fR^d} \phi(\Delta)^{1/2}p(t-r, x-y) g^k(r,y)dy dW_r^k,
\end{align*}
where the limit is in the sense of $\bL_2$-norm.

Here is the main result of this subsection.
\begin{thm}
         \label{thm berstein}
         Let $p\in [2,\infty)$ and  Assumption \ref{ass berstein} hold. Then
         \begin{align*}
 \|\bT_{\phi} g\|_{\bL_p(\cO_T)} \leq N\|g\|_{\bL_p(\cO_T,l_2)} \quad \forall g \in \bL_p(\cO_T,l_2),
\end{align*}       
where $N$ depends only on $d$ and the constants appearing in Assumption \ref{ass berstein}.
         \end{thm}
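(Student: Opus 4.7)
The strategy is to apply Theorem~\ref{main thm} with $T=\infty$, $\cO=\fR^d$, and kernel $K(r,t,z,x)=1_{0<r<t}\,\phi(\Delta)^{1/2}p(t-r,x-z)$. The natural quasi-metric in this setting is
$$\rho(X,Y):=\bigl(\phi^{-1}(|t-s|^{-1})\bigr)^{-1/2}+|x-y|,\qquad X=(t,x),\ Y=(s,y),$$
which is the analog, under the scaling of the subordinate Brownian motion, of $|t-s|^{1/\gamma}+|x-y|$ from Section~5.1. The two-sided power bound (\ref{phi inverse}) shows that $s\mapsto(\phi^{-1}(s^{-1}))^{-1/2}$ is non-decreasing in $s$ and polynomially comparable to powers of $s$, which gives $\rho$ the quasi-metric and doubling ball properties. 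Since $|(0,\infty)\times\fR^d|=\infty$, $\rho$ admits \textbf{FS}.

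Assumption~\ref{main as} is then verified exactly as in Lemma~\ref{a l2 lem}: by It\^o's isometry, Fubini, and Plancherel, using the identity $|\cF(\phi(\Delta)^{1/2}p(t-r,\cdot))(\xi)|^2=\phi(|\xi|^2)e^{-2(t-r)\phi(|\xi|^2)}$ together with $\int_0^\infty \phi(|\xi|^2)e^{-2u\phi(|\xi|^2)}du=1/2$, one obtains $\|\bT_\phi g\|_{\bL_2}^2 \le \tfrac12\|g\|_{\bL_2(l_2)}^2$, and convergence of $\bT_{\phi,\varepsilon}g$ in $\bL_2$ follows by the same dominated convergence argument.

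The substantive work is Assumption~\ref{as hor}. Mirroring Section~5.1, I would first prove the analogs of (\ref{m c 1})--(\ref{m c 3}) for the kernel $\phi(\Delta)^{1/2}p$: a tail estimate of the form $\int_s^t [\int_{|z|\ge c}|\phi(\Delta)^{1/2}p(t-r,z)|dz]^2 dr \le N\bigl(c\sqrt{\phi^{-1}((t-s)^{-1})}\bigr)^{-2\delta}$ for a small $\delta>0$, obtained from (\ref{as ker}) by isolating a weight $|z|^{-d-2\delta}$ on the tail and exploiting the polynomial scaling (\ref{as sca}) of $\phi$; a spatial Lipschitz estimate derived from (\ref{as ker 2}) via the fundamental theorem of calculus; and a temporal Lipschitz estimate from (\ref{as ker 3}) via the mean value theorem, using that $\partial_t[\phi(\Delta)^{1/2}p(t-r,\cdot)]=\phi(\Delta)^{3/2}p(t-r,\cdot)$. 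With these in hand the Hörmander integral is decomposed exactly as in the Corollary ending Section~5.1: assuming $t\ge s$, split $r\in[0,\infty)$ into $[2s-t,t]$ and $[0,2s-t]$, and in the latter range further split $|K(r,t,z,x)-K(r,s,z,y)|$ into a purely spatial difference at time $t$ and a purely temporal difference at location $y$. On the outer region $\{\rho(X,Z)\ge C_0\rho(X,Y)\}$ either $|x-z|\gtrsim\rho(X,Y)$ or the time gap forces $|t-r|\gtrsim(\phi^{-1}(\rho(X,Y)^{-2}))^{-1}$ by (\ref{phi inverse}), and the appropriate estimate from the previous step bounds each piece by a constant depending only on the data of Assumption~\ref{ass berstein}.

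The main obstacle is bookkeeping: every polynomial factor $|t-s|^{1/\gamma}$ that appeared in Section~5.1 must be replaced by $(\phi^{-1}(|t-s|^{-1}))^{-1/2}$, and at each step one has to verify via (\ref{phi inverse}) that the requisite polynomial comparison is preserved under this substitution. Once this replacement is performed consistently, Assumption~\ref{as hor} holds and Theorem~\ref{main thm} applied with $\bT_\varepsilon=\bT_{\phi,\varepsilon}$ and $\bT=\bT_\phi$ yields the claimed $\bL_p$-boundedness.
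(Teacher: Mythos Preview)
Your proposal is correct and matches the paper's approach: the same quasi-metric $\rho(X,Y)=(\phi^{-1}(|t-s|^{-1}))^{-1/2}+|x-y|$, the same Plancherel verification of $L_2$-boundedness, and the same three-estimate lemma (tail, spatial increment, temporal increment) followed by the identical $\cI_1,\cI_{2,1},\cI_{2,2}$ decomposition of the H\"ormander integral. The one cosmetic difference is that for the tail estimate the paper bypasses the $\delta$-weight trick and obtains the simpler bound $\int_s^t\bigl[\int_{|z|\ge c}|\phi(\Delta)^{1/2}p(t-r,z)|\,dz\bigr]^2dr\le N(t-s)\phi(c^{-2})$ directly from the second branch of (\ref{as ker}), which is independent of $r$; your $\delta$-weighted form also works but is slightly more laborious.
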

%

To apply Theorem \ref{main thm}, we set $\cO =\fR^d$ and
$$
\rho(X,Y)= \left(\phi^{-1}\left(|t-s|^{-1} \right) \right)^{-1/2} + |x-y|,
$$
where $X=(t,x)$, $Y=(s,y)$, and $ \left(\phi^{-1}\left(0^{-1} \right) \right)^{-1/2}:=0$.
Due to (\ref{phi inverse}), one can easily check that $\rho$ is a quasi-metric and satisfies the doubling ball condition.
Thus, we only need to check that
$$
K(t-r,z,x):= 1_{0 < r<t <T}\phi(\Delta)^{1/2}p(t-r, x-z)
$$
satisfies Assumption
\ref{as hor}
since the proof of $L_2$-boundedness of $\cT_{\phi}g$ can be easily proved as  the proof of  Lemma \ref{a l2 lem}.

\begin{lemma}
There exists a constant $N$ such that for all $0<a<s<t<T$, $c>0$,
\begin{align}
                    \label{615 1}
\int_s^t  \left[ \int_{ |z| \geq c} \left|\phi (\Delta)^{1/2} p(t-r, z)\right| ~dz \right]^2 dr
\leq N (t-s)\phi(c^{-2}),
\end{align}
\begin{align}
                        \notag
&\int_{0}^a \left[\int_{\fR^d} \left|\phi(\Delta)^{1/2}p(t-r, z+h)-\phi(\Delta)^{1/2}p(t-r, z)\right| ~dz \right]^2 dr \\
                        \label{615 2}
&\leq N|h|^2 \phi^{-1} \left( (t-a)^{-1} \right),
\end{align}
and
\begin{align}
                    \label{615 3}
\int_{0}^a \left[\int_{\fR^d} \left|\phi(\Delta)^{1/2}p(t-r, z)-  \phi(\Delta)^{1/2}p(s-r, z)\right| ~dz \right]^2dr
\leq N(t-s)^2(s -a)^{-2},
\end{align}
where $N$ depends only on $d$ and the constants appearing in Assumption \ref{ass berstein}.
\end{lemma}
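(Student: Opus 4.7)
The plan is to reduce each of the three estimates to an $L_1$-bound on the relevant ``frozen-time'' kernel and then integrate in the time variable using the scaling property (\ref{as sca}). The common preparatory step is to establish that for every $u>0$ and $c>0$,
\begin{align*}
\int_{|z|\geq c}|\phi(\Delta)^{1/2}p(u,z)|\,dz &\leq N\phi(c^{-2})^{1/2},\\
\int_{\fR^d}|\phi(\Delta)^{1/2}\nabla p(u,z)|\,dz &\leq Nu^{-1/2}\bigl(\phi^{-1}(u^{-1})\bigr)^{1/2},\\
\int_{\fR^d}|\phi(\Delta)^{3/2}p(u,z)|\,dz &\leq Nu^{-3/2}.
\end{align*}
Each is proved in the same fashion: split the integration domain at the natural radius $R_u:=(\phi^{-1}(u^{-1}))^{-1/2}$ (or at $c$ in the first inequality), use the ``near'' pointwise bound in (\ref{as ker})--(\ref{as ker 3}) on the small-$|z|$ region and the ``far'' bound on the large-$|z|$ region, pass to polar coordinates, and substitute $v=r^{-2}$; the scaling (\ref{as sca}) together with $\phi(\phi^{-1}(u^{-1}))=u^{-1}$ then controls the resulting radial integrals $\int_0^{\ast}\phi(v)^{1/2}v^{\alpha-1}\,dv$.

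Given these bounds, (\ref{615 1}) is immediate: uniformly in $r\in(s,t)$ the inner $dz$-integral is at most $N\phi(c^{-2})^{1/2}$, so squaring and integrating over $r$ yields $N(t-s)\phi(c^{-2})$.

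For (\ref{615 2}), apply the fundamental theorem of calculus in $z$ and the second $L_1$-bound to obtain
\begin{align*}
\int_{\fR^d}\big|\phi(\Delta)^{1/2}p(t-r,z+h)-\phi(\Delta)^{1/2}p(t-r,z)\big|\,dz\leq N|h|(t-r)^{-1/2}\bigl(\phi^{-1}((t-r)^{-1})\bigr)^{1/2}.
\end{align*}
Squaring and successively changing variables $u=t-r$ and then $v=u^{-1}$ reduces matters to proving
\begin{align*}
\int_{t^{-1}}^{(t-a)^{-1}}v^{-1}\phi^{-1}(v)\,dv\leq N\phi^{-1}((t-a)^{-1}).
\end{align*}
This is the principal technical point and the main obstacle: the crude monotonicity bound $\phi^{-1}(v)\leq\phi^{-1}((t-a)^{-1})$ produces a spurious factor $\log(t/(t-a))$. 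The remedy is the upper scaling estimate from (\ref{phi inverse}), which for $v\leq(t-a)^{-1}$ gives $\phi^{-1}(v)\leq\bar N_1^{-1}\phi^{-1}((t-a)^{-1})(v(t-a))^{\bar\delta_1}$; the remaining integral $\int_{t^{-1}}^{(t-a)^{-1}}v^{\bar\delta_1-1}\,dv$ is then controlled uniformly in $t$ by $(t-a)^{-\bar\delta_1}/\bar\delta_1$, delivering (\ref{615 2}).

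For (\ref{615 3}), observe that $\partial_u\phi(\Delta)^{1/2}p(u,z)=\phi(\Delta)^{3/2}p(u,z)$; the fundamental theorem of calculus in time and Fubini then yield
\begin{align*}
\int_{\fR^d}\big|\phi(\Delta)^{1/2}p(t-r,z)-\phi(\Delta)^{1/2}p(s-r,z)\big|\,dz\leq\int_s^t\!\int_{\fR^d}|\phi(\Delta)^{3/2}p(\xi-r,z)|\,dz\,d\xi,
\end{align*}
which by the third $L_1$-bound is at most $N\int_s^t(\xi-r)^{-3/2}d\xi\leq N(t-s)(s-r)^{-3/2}$, since $(\xi-r)^{-3/2}$ is decreasing in $\xi$ on $[s,t]$. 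Squaring and using $\int_0^a(s-r)^{-3}\,dr\leq\tfrac12(s-a)^{-2}$ produces the claimed bound $N(t-s)^2(s-a)^{-2}$.
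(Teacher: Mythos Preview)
Your proposal is correct and follows essentially the same route as the paper: establish the three $L_1$ kernel bounds via the near/far splitting at the scale $(\phi^{-1}(u^{-1}))^{-1/2}$ together with the scaling hypotheses (\ref{as sca}) and (\ref{phi inverse}), and then integrate in time. The only noteworthy difference is in (\ref{615 3}): the paper invokes the mean-value theorem (which, taken literally, produces a $\theta=\theta(z)$ that one must then dominate uniformly), whereas your use of the fundamental theorem of calculus in the time variable avoids this technicality and is slightly cleaner.
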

\begin{proof}
First we prove (\ref{615 1}).
This is an easy consequence of assumptions on $\phi$.
Indeed, by (\ref{as ker}) and (\ref{as sca}),
\begin{align*}
\int_{ |z| \geq c} |\phi (\Delta)^{1/2} p(t-r, z)| ~dz
&\leq \int_{ |z| \geq c} \frac{\phi(|z|^{-2})^{1/2}}{ |z|^{d}} ~dz  \\
&\leq \int_{ |z| \geq 1} \frac{\phi(|cz|^{-2})^{1/2}}{ |z|^{d}} ~dz  \\
&\leq N \phi(c^{-2})^{1/2}.
\end{align*}
Next we prove (\ref{615 2}).
By the fundamental theorem of calculus,
\begin{align*}
&\int_{\fR^d} \left|\phi(\Delta)^{1/2}p(t-r, z+h)-\phi(\Delta)^{1/2}p(t-r, z)\right| ~dz \\
& \leq |h|\int_{\fR^d} \big|\phi(\Delta)^{1/2}\nabla p(t-r, z)dz.
\end{align*}
Denote
$$
a_t := \left(\phi^{-1}(t^{-1}) \right)^{1/2}.
$$
Then by (\ref{as ker 2}), (\ref{as sca}), and (\ref{at}),
\begin{align*}
&\int_{\fR^d} \left|\phi(\Delta)^{1/2}\nabla p(t-r, z) \right|dz \\
&= \int_{|z| < 1/ a_{t-r}} \left|\phi(\Delta)^{1/2} \nabla p(t-r, z) \right|dz
+ \int_{|z| \geq 1/a_{t-r}} \left|\phi(\Delta)^{1/2} \nabla p(t-r, z) \right|dz \\
&\leq N\left( (1/a_{t-r})^{d} (t-r)^{-1/2}   (\phi^{-1}((t-r)^{-1}))^{(d+1)/2}
+\int_{|z| \geq 1/a_{t-r}}  \frac{\phi(|z|^{-2})^{1/2}}{|z|^{d+1}} dx \right)\\
&\leq N\left( (t-r)^{-1/2}   (\phi^{-1}((t-r)^{-1}))^{1/2} \right).
\end{align*}
Hence by (\ref{phi inverse}),
\begin{align*}
&\int_{0}^a \left[\int_{\fR^d} \big|\phi(\Delta)^{1/2}p(t-r, z+h)-\phi(\Delta)^{1/2}p(t-r, z)\big| ~dz \right]^2 dr  \\
& \leq N |h|^2 \left(  \int_0^a   (t-r)^{-1}\phi^{-1}\left((t-r)^{-1}\right) dr\right) \\
& \leq N |h|^2 \left(  \phi^{-1} \left( (t-a)^{-1} \right) \right).
\end{align*}
Finally we prove (\ref{615 3}).
By the mean-value theorem, (\ref{as ker 3}), and (\ref{as sca}),
\begin{align*}
&\int_{\fR^d} \left|\phi(\Delta)^{1/2}p(t-r, z)-  \phi(\Delta)^{1/2}p(s-r, z) \right| ~dz \\
&=(t-s)\int_{\fR^d} \left|\frac{d}{dr}\phi(\Delta)^{1/2}p(\theta t+ (1-\theta)s - r, z) \right|~dz \\
&=(t-s)\int_{\fR^d} \left|\phi(\Delta)^{3/2}p(\theta t+ (1-\theta)s - r, z) \right|~dz \\
&\leq N(t-s)(\theta t + (1-\theta)s -r)^{-3/2}  \\
&\quad + N(t-s)\int_{|z| \geq 1/a_{\theta t + (1-\theta)s -r }} (\theta t + (1-\theta)s -r)^{-1}\frac{\phi(|z|^{-2})^{1/2}}{ |z|^{d}} dz \\
&\leq N(t-s)(\theta t + (1-\theta)s -r)^{-3/2},
\end{align*}
where $\theta \in [0,1]$.
Therefore,
\begin{align*}
&\int_{0}^a \left[\int_{\fR^d} |\phi(\Delta)^{1/2}p(t-r, z)-  \phi(\Delta)^{1/2}p(s-r, z)| ~dz \right]^2dr \\
&\leq N (t-s)^2 \int_{0}^a (\theta t + (1-\theta)s -r)^{-3} dr \leq (t-s)^2 (s-a)^{-2}.
\end{align*}
The lemma is proved.
\end{proof}

In the following corollary, we finally prove that  the kerenel
$$
K(t-r,z,x):= 1_{0 < r<t <T}\phi(\Delta)^{1/2}p(t-r, x-z)
$$
satisfies Assumption \ref{as hor}.
Recall
$$
\rho(X,Y)= \left(\phi^{-1}\left(|t-s|^{-1} \right) \right)^{-1/2} + |x-y|.
$$
Due to (\ref{phi inverse}), there exists a constant $N_\phi \geq 1$ so that
$$
\phi^{-1}\left( a \right)
\leq N_\phi \phi^{-1}\left( 2^{-1} a \right) \qquad \forall a > 0.
$$
For $r>0$, $X=(t,x), Y=(s,y)\in (0,\infty)\times \fR^d$ set
\begin{align*}
A(r,X,Y) 
:= \left\{z \in \fR^d: \rho(X,Z) \geq 4N_\phi \rho(X,Y) \right\},
\end{align*}
where $Z=(r,z)$.
For the notational convenience, we use
$$
\phi(\Delta)^{1/2}p(t-r, x-z)
$$
to denote
$$
1_{0 < r<t <T}\phi(\Delta)^{1/2}p(t-r, x-z),
$$
that is, we assume
$$
\phi(\Delta)^{1/2}p(t-r, x-z)=0
$$
unless $0<r<t<T$.
\begin{corollary}
There exists a constant $N$ so that for all $X=(t,x), Y=(s,y) \in (0,T) \times \fR^d$,
\begin{align*}
&\int_{0}^T \left[ \int_{A(r,X,Y)} \left|\phi(\Delta)^{1/2}p(t-r,x-z) - \phi(\Delta)^{1/2}p(s-r, y-z)\right| ~dz\right]^2 dr
\leq N,
\end{align*}
where $N$ depends only on $d$ and the constants appearing in Assumption \ref{ass berstein}.
\end{corollary}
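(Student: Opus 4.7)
The plan is to mirror the proof of the analogous corollary in the pseudo-differential subsection, replacing the power-law spatial scale $|t-s|^{1/\gamma}$ with the general scale $a_{t-s}^{-1}:=\bigl(\phi^{-1}(|t-s|^{-1})\bigr)^{-1/2}$ and using the three kernel estimates (\ref{615 1})--(\ref{615 3}) in place of (\ref{m c 1})--(\ref{m c 3}). Without loss of generality assume $t\geq s$; the case $t=s$ is simpler, so I focus on $t>s$. I split the $r$-integral at $r=(2s-t)\vee 0$ into
\begin{align*}
\cJ_1 := \int_{(2s-t)\vee 0}^T [\,\cdots\,]\,dr, \qquad \cJ_2 := \int_0^{(2s-t)\vee 0} [\,\cdots\,]\,dr,
\end{align*}
where $[\,\cdots\,]$ denotes the squared bracket in the statement.

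For $\cJ_1$, the triangle inequality separates the $t$-kernel $\phi(\Delta)^{1/2}p(t-r,x-z)$ from the $s$-kernel $\phi(\Delta)^{1/2}p(s-r,y-z)$. On $r\in[2s-t,T]$ we have $|t-r|\leq 2(t-s)$ and, wherever the $s$-kernel is nonzero, $|s-r|\leq t-s$; by the monotonicity of $\tau\mapsto a_\tau^{-1}$ and (\ref{phi inverse}), $a_{|t-r|}^{-1}\leq N_\phi^{1/2}\,a_{t-s}^{-1}$. The defining inequality of $A(r,X,Y)$ then forces $|x-z|\geq 3a_{t-s}^{-1}+4N_\phi|x-y|$, and hence $|y-z|\geq a_{t-s}^{-1}$. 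A translation and (\ref{615 1}) with $c=a_{t-s}^{-1}$ bound each contribution by $N\cdot 2(t-s)\cdot\phi(a_{t-s}^2)=2N$ via the identity $\phi\circ\phi^{-1}=\mathrm{id}$ from (\ref{at}); hence $\cJ_1\leq N$.

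For $\cJ_2$, nontrivial only when $2s-t>0$, I interpose $\phi(\Delta)^{1/2}p(t-r,y-z)$ and get $\cJ_2\leq 2\cJ_{2,1}+\cJ_{2,2}$, where $\cJ_{2,1}$ involves the spatial shift $x\mapsto y$ in the $t$-kernel and $\cJ_{2,2}$ the time shift $t\mapsto s$ of the $y$-centered kernel. Dropping the $A$-restriction and applying (\ref{615 3}) with $a=2s-t$ yields $\cJ_{2,2}\leq N(t-s)^2(s-(2s-t))^{-2}=N$. For $\cJ_{2,1}$ I split into two subcases. If $|x-y|\leq a_{t-s}^{-1}$, then (\ref{615 2}) with $a=2s-t$ and monotonicity of $\phi^{-1}$ give $\cJ_{2,1}\leq N|x-y|^2\,\phi^{-1}\bigl((2(t-s))^{-1}\bigr)\leq N|x-y|^2\,a_{t-s}^2\leq N$. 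If instead $|x-y|>a_{t-s}^{-1}$, I introduce the time-scale $\tau_0:=1/\phi(|x-y|^{-2})$, i.e.\ the unique $\tau$ with $a_\tau^{-1}=|x-y|$; this subcase is equivalent to $\tau_0>t-s$ by (\ref{at}), so $s-\tau_0<2s-t$. Splitting further at $r=(s-\tau_0)\vee 0$: on $[(s-\tau_0)\vee 0,\,2s-t]$ the bound $|t-r|\leq(t-s)+\tau_0\leq 2\tau_0$ combined with (\ref{phi inverse}) and the $A$-condition makes $|x-z|$ and $|y-z|$ at least a positive constant multiple of $|x-y|$, so (\ref{615 1}) with $c=|x-y|$ yields $\leq N(\tau_0-(t-s))\,\phi(|x-y|^{-2})=N(\tau_0-(t-s))/\tau_0\leq N$; on $[0,s-\tau_0]$ (if nonempty) dropping the $A$-restriction and applying (\ref{615 2}) with $a=s-\tau_0$ (so $t-a\geq\tau_0$) yields $\leq N|x-y|^2\,\phi^{-1}(1/\tau_0)=N$.

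The main obstacle is the clean scaling bookkeeping: unlike the pseudo-differential case where $|t-s|^{1/\gamma}$ scales as a genuine power, here every passage between the time-scales $t-s$ and $\tau_0=1/\phi(|x-y|^{-2})$ must be justified through (\ref{phi inverse}) and closed via the identity in (\ref{at}). The scaling assumption (\ref{as sca}) guarantees that $\phi^{-1}$ is comparable to a power up to multiplicative constants, which is precisely what keeps the cancellations clean and produces a final bound depending only on $d$ and the constants in Assumption \ref{ass berstein}.
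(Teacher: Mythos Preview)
Your proposal is correct and follows essentially the same route as the paper's own proof: the same splitting at $r=2s-t$, the same interposition of $\phi(\Delta)^{1/2}p(t-r,y-z)$ to separate the spatial and temporal increments, the same dichotomy on whether $|x-y|$ exceeds $a_{t-s}^{-1}$, and in the large-$|x-y|$ case the same secondary split at $s-\tau_0$ with $\tau_0=1/\phi(|x-y|^{-2})$. Your bookkeeping is in fact slightly more careful than the paper's (you truncate at $(2s-t)\vee 0$ and $(s-\tau_0)\vee 0$, and you spell out how the $A$-condition together with (\ref{phi inverse}) forces the lower bounds on $|x-z|$ and $|y-z|$), but the argument is the same.
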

\begin{proof}
Without loss of generality, we assume $t \geq s$. 
We only focus on proving the case $t>s$ since the proof of the case $t=s$ is simpler. 
Denote
\begin{align*}
\cI(r,X,Y)
=\left[ \int_{A(r,X,Y)} \left| \phi(\Delta)^{1/2}p(t-r,x-z) - \phi(\Delta)^{1/2}p(s-r, y-z)\right| ~dz\right]^2.
\end{align*}
If $r \geq t$, then $\cI(r,X,Y)=0$. Thus
\begin{align*}
\int_{0}^T \cI(r,X,Y) dr
&=\int_{2s-t}^t \cI(r,X,Y) dr +\int_{0}^{2s-t} \cI(r,X,Y) dr \\
&=:\cI_1(X,Y) + \cI_2(X,Y).
\end{align*}
First we estimate $\cI_1(X,Y)$.
By (\ref{615 1}),
\begin{align*}
&\cI_1(X,Y) \\
&\leq \int_{2s-t}^t\left[ \int_{|z| \geq \left(\phi^{-1}(|t-s|^{-1})\right)^{-1/2} } \left| \phi(\Delta)^{1/2}p(t-r,z) \right| ~dz\right]^2dr \\
&\quad +\int_{2s-t}^s\left[ \int_{|z| \geq \left(\phi^{-1}(|t-s|^{-1})\right)^{-1/2} } \left| \phi(\Delta)^{1/2}p(s-r,z) \right| ~dz\right]^2dr
\leq N.
\end{align*}
We split $\cI_2$.
Observe
\begin{align*}
\cI_2
&\leq \cI_{2,1}+ \cI_{2,2}\\
&:= \int_{0}^{2s-t} \left[ \int_{A(r,X,Y)} \left|\phi(\Delta)^{1/2}p(t-r,x-z) - \phi(\Delta)^{1/2}p(t-r, y-z)\right| ~dz\right]^2 dr  \\
&+\int_{0}^{2s-t} \left[ \int_{A(r,X,Y)} \left|\phi(\Delta)^{1/2}p(t-r,y-z) -\phi(\Delta)^{1/2}p(s-r, y-z)\right| ~dz\right]^2 dr.
\end{align*}
If $|x-y| \leq \left(\phi^{-1}(|t-s|^{-1})\right)^{-1/2}$ then by (\ref{615 2}),
\begin{align*}
\cI_{2,1}
\leq N|x-y|^2 \left(  \phi^{-1} \left( |t-s|^{-1} \right) \right)
\leq N.
\end{align*}
On the other hand, if
\begin{align*}
|x-y| > \left(\phi^{-1}(|t-s|^{-1})\right)^{-1/2},
\end{align*}
then
\begin{align*}
\cI_{2,1}\leq 2\cI_{2,1,1}+\cI_{2,1,2},
\end{align*}
where
\begin{align*}
&\cI_{2,1,1}:= \int_{s- \left(\phi\left(|x-y|^{-2}\right)\right)^{-1}}^{t} \left[ \int_{|z| \geq  \left(\phi^{-1}(|t-s|^{-1})\right)^{-1/2} + |x-y|} \left| \phi(\Delta)^{1/2}p(t-r,z)\right| ~dz\right]^2 dr
\end{align*}
\begin{align*}
&\cI_{2,1,2} \\
&:= \int_{0}^{s-\left(\phi\left(|x-y|^{-2}\right)\right)^{-1}} \left[ \int_{\fR^d} 
\left|\phi(\Delta)^{1/2}p(t-r,x-z) - \phi(\Delta)^{1/2}p(t-r, y-z)\right| ~dz\right]^2 dr.
\end{align*}
By (\ref{615 1}),
\begin{align*}
\cI_{2,1,1}
&\leq N \left[\left(t-s + \left(\phi\left(|x-y|^{-2}\right)\right)^{-1}\right) \phi \left( \left( \left(\phi^{-1}(|t-s|^{-1})\right)^{-1/2}+|x-y|\right)^{-2}\right)\right]  \\
&\leq N
\end{align*}
and by (\ref{615 2}),
\begin{align*}
\cI_{2,1,2}
&\leq N|x-y|^2 \phi^{-1} \left( \left(t-s +\left(\phi\left(|x-y|^{-2}\right)\right)^{-1}\right)^{-1} \right)\leq N.
\end{align*}
It only remains to estimate $\cI_{2,2}$. However, this is an easy consequence of (\ref{615 3}).
Indeed,
\begin{align*}
\cI_{2,2} \leq N\left((t-s)|t-s|^{-1}\right)^2 \leq N.
\end{align*}
The corollary is proved.
\end{proof}
Consequently,  to prove  Theorem \ref{thm berstein} it is enough to apply Theorem \ref{main thm} with
$$
\bT_{\varepsilon} = \bT_{\phi, \varepsilon} \quad \text{and} \quad  \bT = \bT_{\phi}.
$$


\begin{thebibliography}{10}

%


\bibitem{dongkim2016}
 H. Dong and D. Kim. 
\newblock { On $ L_p $-estimates for elliptic and parabolic equations with $ A_p $ weights.}
\newblock  {\em arXiv preprint}, arXiv:1603.07844, 2016.


\bibitem{grafakos2009modern}
L.~Grafakos.
\newblock  Modern Fourier Analysis. 
\newblock  {\em Springer}, 2008.


\bibitem{kim2013parabolic}
I.~Kim, K.-H. Kim, and P.~Kim.
\newblock Parabolic {Littlewood-Paley} inequality for {$\phi$(-$\Delta$)}-type operators and applications to stochastic integro-differential
  equations.
\newblock {\em Advances in Mathematics}, 249:161--203, 2013.

%
\bibitem{kim2016parabolic}
I.~Kim, K.-H. Kim, and S.~Lim.
\newblock {Parabolic Littlewood-Paley inequality for a class of time-dependent
  pseudo-differential operators of arbitrary order, and applications to
  high-order stochastic PDE}.
\newblock {\em Journal of Mathematical Analysis and Applications},
  436(2):1023--1047, 2016.

\bibitem{kim2016q}
I.~Kim, K.-H. Kim, and S.~Lim.
\newblock {An $L_q(L_p)$-Theory for Parabolic Pseudo-Differential Equations:
  Calder{\'o}n-Zygmund Approach}.
\newblock {\em Potential Analysis}, pages 1--21, 2016.

\bibitem{Krylov1994}
N.~V. Krylov.
\newblock A generalization of the Littlewood-Paley inequality and some other
  results related to stochastic partial differential equations.
\newblock {\em Ulam Quart}, 2(4):16, 1994.

\bibitem{Krylov1999}
N.~V. Krylov.
\newblock An analytic approach to {SPDEs}.
\newblock {\em Stochastic Partial Differential Equations: Six Perspectives,
  Mathematical Surveys and Monographs}, 64:185--242, 1999.



\bibitem{Krylov2008}
N.~V. Krylov.
\newblock {\em Lectures on Elliptic and Parabolic Equations in Sobolev Spaces},
  volume~96.
\newblock American Mathematical Society Providence, RI, 2008.

\bibitem{Mikulevicius2012}
R.~Mikulevi{\v{c}}ius and H.~Pragarauskas.
\newblock On {$L_p$}-estimates of {Some Singular Integrals Related to Jump
  Processes}.
\newblock {\em SIAM Journal on Mathematical Analysis}, 44(4):2305--2328, 2012.

\bibitem{schilling2012bernstein}
R.~L. Schilling, R.~Song, and Z.~Vondracek.
\newblock {\em Bernstein functions: theory and applications}, volume~37.
\newblock Walter de Gruyter, 2012.

\bibitem{Stein1993}
E.~M. Stein and T.~S. Murphy.
\newblock {\em Harmonic analysis: real-variable methods, orthogonality, and
  oscillatory integrals}, volume~3.
\newblock Princeton University Press, 1993.

\bibitem{van2012maximal}
J.~Van~Neerven, M.~Veraar, and L.~Weis.
\newblock {Maximal $L^p$-Regularity for Stochastic Evolution Equations}.
\newblock {\em SIAM Journal on Mathematical Analysis}, 44(3):1372--1414, 2012.

\bibitem{van2012stochastic}
J.~Van~Neerven, M.~Veraar, and L.~Weis.
\newblock {Stochastic maximal $L_p$-regularity}.
\newblock {\em The Annals of Probability}, 40(2):788--812, 2012.

\end{thebibliography}
\end{document}